\documentclass[10pt,a4paper]{amsart}
\usepackage[utf8]{inputenc}
\usepackage{amsmath,amssymb,amsthm,enumerate,graphicx}
\usepackage{hyperref} 
\usepackage[shortlabels]{enumitem}
\usepackage{hyperref}
\newtheorem{theorem}{Theorem}[section]
\newtheorem{thmx}{Theorem}

\newtheorem{proposition}[theorem]{Proposition}
\newtheorem{corollary}[theorem]{Corollary}

\newtheorem{lemma}[theorem]{Lemma}
\newtheorem{fact}[theorem]{Fact}
\theoremstyle{definition}
\newtheorem{definition}[theorem]{Definition}
\newtheorem{notation}[theorem]{Notation}
\newtheorem*{thm*}{Theorem}

\newtheorem*{claim*}{Claim}

\newtheorem{remark}[theorem]{Remark}

\newcounter{que}
\newtheorem{question}[que]{Question}
\newcommand{\R}{\mathbb R}
\newcommand{\N}{\mathbb N}

\newcommand{\Span}{\operatorname{span}}

\newcommand\closedSpan[1]{\overline{\Span} \{#1\}}
\interfootnotelinepenalty=10000 

\def\F{\mathcal{F}}
\def\AA{\mathcal{A}}
\def\BB{\mathcal{B}}

\def\NN{\mathcal{N}}
\def\MM{\mathcal{M}}
\def\PP{\mathcal{P}}
\def\TT{\mathcal{T}}
\def\bb{\mathfrak{b}}
\newcommand{\amen}{\mathfrak a}
\newcommand{\absamen}{\mathfrak{aa}}

\def\T{\mathcal{T}}

\newcommand{\Rea}{\mathbb{R}}

\newcommand{\Nat}{\mathbb{N}}

\newcommand{\co}{\operatorname{co}}

\newcommand{\desc}{\operatorname{desc}}
\newcommand{\pred}{\operatorname{pred}}
\newcommand{\rank}{\operatorname{rank}}
\newcommand{\leaf}{\operatorname{leaf}}
\newcommand{\Lip}{\operatorname{Lip}}

\newcommand\abs[1]{\mathopen|#1\mathclose|}

\newcommand\absb[2]{\csname#1l\endcsname|#2\csname#1r\endcsname|}
\newcommand\norm[1]{\mathopen\|#1\mathclose\|}

\newcommand\normb[2]{\csname#1l\endcsname\|#2\csname#1r\endcsname\|}

\begin{document}

\title[Canonical embedding of Lipschitz-free $p$-spaces]{Canonical embedding of Lipschitz-free $p$-spaces}

\author[M. C\' uth]{Marek C\'uth}
\author[T. Raunig]{Tom\'a\v{s} Raunig}
\email{cuth@karlin.mff.cuni.cz}
\email{raunig@karlin.mff.cuni.cz}

\address[M.~C\' uth, T.~Raunig]{Charles University, Faculty of Mathematics and Physics, Department of Mathematical Analysis, Sokolovsk\'a 83, 186 75 Prague 8, Czech Republic}

\subjclass[2020] {46A16, 46B80, 46B85}

\keywords{Quasi-Banach space, Lipschitz free $p$-space, $p$-amenability}
\thanks{Both authors were  supported by the GA\v{C}R project 23-04776S}

\begin{abstract}
We find a new finite algorithm for evaluation of Lipschitz-free $p$-space norm in finite-dimensional Lipschitz-free $p$-spaces. We use this algorithm to deal with the problem of whether given $p$-metric spaces $\NN\subset \MM$, the canonical embedding of $\F_p(\NN)$ into $\F_p(\MM)$ is an isomorphism. The most significant result in this direction is that the answer is positive if $\NN\subset \MM$ are metric spaces.
\end{abstract}
\maketitle

\section*{Introduction}

Lipschitz-free spaces form nowadays arguably one of the most important classes of Banach spaces having impact on various research areas, see e.g. \cite{AGPP, FG, HR, KV} for some recent results in metric geometry and (nonlinear) geometry of Banach spaces influenced by the study of Lipschitz-free spaces. There is also a  nonlocally convex analogue of this class of spaces, namely the Lipschitz-free $p$-spaces over $p$-metric spaces for $p\in (0,1]$, denoted by $\F_p(\MM)$, where the case of $p=1$ corresponds to the classical case of Lipschitz-free spaces. The study of Lipschitz-free $p$-spaces was initiated by N. Kalton and F. Albiac in \cite{AK09}, where spaces $\F_p(\MM)$ were applied to construct for each $p\in (0,1)$ an example of two separable $p$-Banach spaces which are Lipschitz isomorphic, but not linearly isomorphic (for $p=1$ this is nowadays one of the most important open problems in the nonlinear geometry of Banach spaces, see e.g. \cite[Problem 14.3.1]{AlbiacKaltonBook}). One decade after Lipschitz-free $p$-spaces were defined, in \cite{AACD} the authors initiated their systematic study and soon after several further papers dealing with Lipschitz-free $p$-spaces occurred, see e.g. \cite{BimaOld} for a recent contribution. It is not a surprise that some features of $\F_p(\MM)$ spaces are preserved from the case of $p=1$, while some others simply do not hold for $p<1$. Concerning the case when analogies coming from the case of $p=1$ do hold, note that an important feature of the case of $p<1$ is that we lack duality techniques which are naturally used in the case of $p=1$ and therefore it is often the case that when dealing with $p<1$ one is forced to develop new techniques which have impact also on the classical case when $p=1$, see e.g. \cite{AACD21}. On the other hand, sometimes even the very basic properties of Lispchitz-free spaces fail for the case when $p<1$. A basic evidence is that for any metric spaces $\NN\subset \MM$, it is straightforward to show that the canonical embedding of $\F_1(\NN)$ into $\F_1(\MM)$ is an isometry, while for $p<1$ this is not the case and it is even an open
question whether in general the canonical embedding is an isomorphism, see \cite[Theorem 6.1 and Question 6.2]{AACD}, respectively.

One of the main results contained in this paper is the following.

\begin{thmx}[see Theorem~\ref{thm:metricSolution}]\label{thmx:main}Given metric spaces $\NN\subset\MM$, the canonical embedding of $\F_p(\NN)$ into $\F_p(\MM)$ is an isomorphism for every $p\in(0,1)$.
\end{thmx}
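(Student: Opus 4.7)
The plan is to prove the nontrivial direction $\|\mu\|_{\F_p(\NN)} \le C(p)\,\|\mu\|_{\F_p(\MM)}$ for every finitely supported $\mu\in \F_p(\NN)$, with $C(p)$ depending only on $p$; the reverse inequality with constant $1$ is immediate from the inclusion, and a standard density argument upgrades a two-sided bound on such a dense subspace to the full isomorphism claim. First, I would reduce to finite ambient spaces: given $\mu$ and a near-optimal decomposition $\mu = \sum_{j=1}^N a_j(\delta_{x_j}-\delta_{y_j})$ in $\F_p(\MM)$, let $\MM_0\subset\MM$ be the finite union of $\operatorname{supp}\mu$ and all points $x_j,y_j$, and set $\NN_0 = \NN\cap\MM_0$. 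It then suffices to bound $\|\mu\|_{\F_p(\NN_0)}$ in terms of $\|\mu\|_{\F_p(\MM_0)}$.

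The central step is to invoke the finite algorithm for evaluating $\F_p$-norms on finite $p$-metric spaces that forms the main technical contribution of the paper, and to use it to put the near-optimal decomposition into a canonical combinatorial form. The key use of the metric hypothesis on $\MM$ is to ensure that this form has no edges joining two \emph{Steiner vertices} (vertices of $\MM_0\setminus\operatorname{supp}\mu$): whenever two Steiner vertices $s_1,s_2$ are connected, the triangle inequality in $\MM$, combined with the subadditivity of $t\mapsto t^p$ for $p\in(0,1]$, permits short-cutting that edge without increasing the cost. The outcome is a decomposition in which every edge has at least one endpoint in $\operatorname{supp}\mu\subset\NN_0$.

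Then I would project to $\NN_0$. Define $\pi\colon\MM_0\to\NN_0$ by fixing $\NN_0$ pointwise and mapping each Steiner vertex $s$ to a nearest point of $\NN_0$. Applying $\pi$ vertex-wise to the canonical decomposition yields a decomposition of $\mu$ supported in $\NN_0$. By the structural property above, for every edge $(u,v)$ at least one endpoint (say $u$) lies in $\NN_0$, so $\pi(u)=u$, and the minimality of $\pi(v)$ gives $d(v,\pi(v))\le d(v,u)$. The triangle inequality then yields
\[
d(\pi(u),\pi(v)) \le d(u,v) + d(v,\pi(v)) \le 2\,d(u,v),
\]
so the $p$-cost of each edge blows up by at most a factor $2^p$. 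Summing over edges and letting the near-optimality tend to $0$ gives $\|\mu\|_{\F_p(\NN)} \le 2\,\|\mu\|_{\F_p(\MM)}$.

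The main obstacle is precisely the structural reduction in the second step: in a general $p$-metric ambient space one cannot short-cut chains of Steiner vertices, and these chains genuinely lower the $\F_p$-norm, which is exactly why the analogous statement fails without the metric hypothesis. Extracting from the finite algorithm a canonical optimal decomposition with the ``no Steiner--Steiner edges'' property, and doing so in a way that uses the full metric triangle inequality of $\MM$ rather than merely its $p$-metric version, is where the substantive work lies; once that is available, the geometric projection argument outlined above concludes the proof with constant $C(p)=2$.
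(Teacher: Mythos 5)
Your architecture (reduce to a finite ambient space, take a near-optimal tree decomposition from the algorithm of Theorem~\ref{thm:algorithm}, normalize it so that every edge meets $\NN_0$, then push it into $\NN_0$ by a nearest-point map applied edge-wise) would indeed yield the constant $2$ \emph{if} the structural claim in your second step were true. But that claim --- that Steiner--Steiner edges can be short-cut without increasing the cost --- is precisely where the argument breaks, and the justification you offer does not work. Removing a Steiner vertex $s_2$ with predecessor $s_1$ and reattaching $\desc(s_2)$ to $s_1$ trades the term $\absa{\sum_{y\in\desc(s_2)}c_T(y,a)}^p\,d(s_1,s_2)^p$ for, at best, $\big(\sum_{y\in\desc(s_2)}\abs{c_T(y,a)}^p\big)\,d(s_1,s_2)^p$, and for $p<1$ one has $\sum_y\abs{c_y}^p\ge\absa{\sum_y c_y}^p$ with strict inequality as soon as two coefficients are nonzero: the subadditivity of $t\mapsto t^p$ works \emph{against} you on the coefficient side, not for you. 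Concretely, take $d(s_1,s_2)=1$, two descendants $y_1,y_2$ of $s_2$ with $d(s_2,y_i)=\varepsilon$, $d(s_1,y_i)=1$ (a legitimate metric configuration) and $c_{y_1}=c_{y_2}=1$: the short-cut changes the relevant part of the cost from $2^p+2\varepsilon^p$ to $2$, which is strictly larger for small $\varepsilon$. By Proposition~\ref{prop:omitPoints} every Steiner vertex in an optimal tree has at least two descendants, so branching Steiner--Steiner edges are exactly the configurations you must handle, and Theorem~\ref{thm:reductionExamples} shows that the extremal examples are weighted trees whose entire interior consists of Steiner vertices --- saturated with such edges. A further warning sign: your conclusion would give $\absamen_p^1(n)\le 2$ for all $n$, which is the first open question in Section~\ref{sec:open} (for $q=1$); the paper only obtains $C(p)\le 7\cdot 12^{1/p-1}$ and states explicitly that the bound $2$ is not known.

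The paper closes this gap by a different final step: after reducing to weighted trees with $\NN=\{0\}\cup\leaf(T)$ (Theorem~\ref{thm:reductionExamples}), it embeds $\MM_{T,1}$ into a metric tree and applies the Matou\v{s}ek-type Lipschitz extension theorem for maps from subsets of metric trees into $p$-Banach spaces (Theorem~\ref{thm:bima}) to extend $\delta:\{0\}\cup\leaf(T)\to\F_p(\{0\}\cup\leaf(T))$ to a $C(p)$-Lipschitz map on the whole tree, which gives $p$-amenability with constant $C(p)$. If you want to rescue your projection argument, the missing ingredient is a proof that restricting the minimum in Theorem~\ref{thm:algorithm} to trees with no Steiner--Steiner edges loses at most a bounded factor; this is a global statement about the optimization, not a local edge modification, and nothing in the paper (or in your sketch) establishes it.
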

\noindent We admit that this does not entirely solve \cite[Question 6.2]{AACD} as we do not know whether Theorem~\ref{thmx:main} holds if we assume that $\NN\subset\MM$ are $p$-metric spaces for $p<1$. Still, it solves quite an important special case and implies e.g. positive answer to \cite[Question 6.4]{AACD20}, see Proposition~\ref{prop:nets} below. Moreover, our method of proof does say at lest something even for the general case.

The starting point of our investigation is a simple and instructive formula for the computation of a Lipschitz-free $p$-norm, see Theorem~\ref{thm:algorithm}. This could be new even for the case of $p=1$, even though in this case we do not have any application (we refer the reader to the beginning of Subsection~\ref{subsec:applicAlgorithm} for a discussion concerning importance of Theorem~\ref{thm:algorithm} for the case of $p=1$). The first evidence that our algorithm for the computation Lipschitz-free $p$-space norm might be useful is described in Remark~\ref{rem:amenability}; namely, using Theorem~\ref{thm:algorithm} we can quite substantially simplify the proof of \cite[Theorem 6.1]{AACD}, where the authors found the first example of metric spaces $\NN\subset\MM$ such that the canonical embedding of $\F_p(\NN)$ into $\F_p(\MM)$ is not an isometry. Encouraged by this observation, we work further in this direction in Section~\ref{sec:pAmen}. Apart from the above mentioned Theorem~\ref{thmx:main} we prove Theorem~\ref{thm:isomAmen}, where we give a characterization of those $p$-metric spaces $\NN$ with $|\NN|=3$ for which the canonical embedding from $\F_p(\NN)$ into $\F_p(\MM)$ is isometry whenever $\MM$ is a~$p$-metric space with $\NN\subset \MM$. We also prove Theorem~\ref{thm:reductionExamples}, where we find out that when dealing with \cite[Question 6.2]{AACD} it suffices to consider a specific class of $p$-metric spaces coming from weighted trees and their subspaces given by leaves in those trees - this is the most important step in the proof of Theorem~\ref{thmx:main} and also it enables us to come up with several other estimates summarized in Subsection~\ref{subsec:estimates}.

The structure of the paper is the following. In Section~\ref{sec:prelim} we collect preliminaries and notation including the definition of Lipschitz-free $p$-spaces and their basic properties. The main aim of Section~\ref{sec:algo} is to prove Theorem~\ref{thm:algorithm} giving us an algorithm for the computation of a Lipschitz-free $p$-space norm. In Section~\ref{sec:pAmen} we develop applications of Theorem~\ref{thm:algorithm} connected to \cite[Question 6.2]{AACD} including the proof of Theorem~\ref{thmx:main}. There are still many natural questions, which we leave open and some of those are collected in Section~\ref{sec:open}.

\section{Preliminaries and notation}\label{sec:prelim}

Given two sets $A\subset B$ we naturally identify $\Rea^A$ with the subsest of $\Rea^B$ defined as $\{x\in \R^B\colon x|_{B\setminus A}\equiv 0\}$.

Let $p\in (0,1]$. We say $(\MM,d)$ is a $p$-metric space if $(\MM,d^p)$ is a metric space. By a \emph{nontrivial $p$-metric space} we understand a metric space consisting of at least two points. We say $(\MM, d, 0)$ is a~pointed $p$-metric space if $(\MM, d)$ is a~$p$-metric space and $0 \in \MM$ is an arbitrary distinguished point.

All the vector spaces in this paper are over the field of real numbers. A $p$-Banach space is a vector space $X$ endowed with a $p$-norm $\norm{\cdot}_X:X\to [0,\infty)$ (a~mapping satisfying all the axioms of the norm except that the triangle inequality is replaced with the $p$-triangle inequality, that is, $\norm{x+y}^p\leq \norm{x}^p + \norm{y}^p$) and moreover $(X,\norm{\cdot}_X)$ is complete. For some preliminaries concerning $p$-metric spaces and $p$-Banach spaces we refer the interested reader to \cite[Section 2]{AACD}. Let $X$ be a~$p$-Banach space. Given a~finite set $F$ and $x\in X^F$ it is obvious what we mean by the expression $\sum_{f\in F} x_f$, having in mind that if $F=\emptyset$ we put $\sum_{f\in F} x_f:=0$. A~set $A\subset X$ is \emph{$p$-convex} if $\lambda x + \mu y\in A$ whenever
$x,y \in A$ and $\lambda,\mu\geq 0$ are such that $\lambda^p + \mu^p = 1$. Given $A\subset X$, the \emph{$p$-convex hull} of $A$, $\co_p(A)$ for short, is the smallest $p$-convex set containing $A$. Note that if $A$ is a symmetric set containing the origin, for $p<1$ the $p$-convex hull and ``absolutely $p$-convex hull'' are equivalent notions, see e.g. \cite[Lemma 1]{BBP}. 

Let $(\MM,d,0)$ be a pointed $p$-metric space. By $\Rea^\MM_0$ we denote the vector space of (not necessarily continuous) mappings $f:\MM\to \Rea$ satisfying $f(0)=0$. For $x\in \MM$ we denote by $\delta_\MM(x)$ (or just $\delta(x)$ for short) the linear mapping $\delta_\MM(x):\Rea^\MM_0\to \Rea$ given by $\delta_\MM(x)(f):=f(x)$. Finally, the \emph{Lipschitz-free $p$-space over $\MM$}, denoted by $\F_p(\MM)$, is the completion of the linear span $\PP(\MM) = \Span\{\delta_\MM(x)\colon x\in\MM\}$ equipped with the $p$-norm
\[
\normb{Big}{\sum_{i=1}^n a_i\delta(x_i)}:=\sup \normb{Big}{\sum_{i=1}^n a_if(x_i)}_Y,
\]
where the supremum is taken over all $p$-Banach spaces $(Y,\norm{\cdot}_Y)$ and all choices of
$1$-Lipschitz maps $f:\MM\to Y$ with $f(0) = 0$. We refer the interested reader to \cite[Section 4]{AACD} for basic properties of Lipschitz-free $p$-spaces. The ones which are the most relevant for this paper are summarized below. The mapping $\delta_\MM:\MM\to\F_p(\MM)$ is isometry and given two pointed $p$-metric spaces $\NN$ and $\MM$ and a Lipschitz map $f:\NN\to \MM$ with $f(0_\NN)=0_\MM$, there exists unique linear operator $L_f:\F_p(\NN)\to \F_p(\MM)$ such that $\norm{L_f} = \Lip(f)$ and $L_f\circ \delta_\NN = \delta_\MM\circ f$, we say that $L_f$ is the \emph{canonical linearization of $f$}. 
Further, we use the following notation
\[\begin{split}
\AA(\MM) = \Big\{\frac{\delta(x)-\delta(y)}{d(x,y)}\colon x\neq y\in\MM\Big\}\subset \F_p(\MM).
\end{split}\]
The following is an easy consequence of the fact that for $\PP(\MM)\cap B_{\F_p(\MM)} = \co_p(\AA(\MM))$, see \cite[Definition 4.9 and Theorem 4.10]{AACD}.
\begin{fact}\label{fact:wlogFinite}Let $p\in(0,1]$ and $(\MM,d,0)$ be a pointed $p$-metric space. Then for a finite set $0\in F\subset \MM$ and $a\in\R^F$ we have
\[
\normb{Big}{\sum_{f\in F}a_f \delta(f)}_{\F_p(\MM)} = \inf\Big\{\normb{Big}{\sum_{f\in F}a_f \delta(f)}_{\F_p(\NN)}\colon \NN\subset\MM\text{ is finite and }F\subset \NN\Big\}.
\]
\end{fact}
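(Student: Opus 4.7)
My plan is to establish the claimed equality by proving the two inequalities separately. The inequality $\mathrm{LHS} \leq \inf$ is essentially formal, whereas the reverse inequality requires an explicit unfolding of the stated characterization $\PP(\MM) \cap B_{\F_p(\MM)} = \co_p(\AA(\MM))$ so as to reduce any near-optimal witness in $\F_p(\MM)$ to one supported on a finite subset containing $F$.

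For the easy direction, I would fix any finite set $\NN$ with $F \subset \NN \subset \MM$ and invoke the canonical linearization $L : \F_p(\NN) \to \F_p(\MM)$ of the inclusion $\NN \hookrightarrow \MM$. Since the inclusion is $1$-Lipschitz, $\norm{L}\leq 1$, and $L$ sends $\sum_{f\in F} a_f \delta_\NN(f)$ to $\sum_{f\in F} a_f \delta_\MM(f)$, which immediately yields $\norm{\sum_{f\in F} a_f \delta(f)}_{\F_p(\MM)} \leq \norm{\sum_{f\in F} a_f \delta(f)}_{\F_p(\NN)}$; taking the infimum over admissible $\NN$ finishes this direction.

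For the opposite inequality, I set $\mu := \sum_{f\in F} a_f \delta(f)$ and $t := \norm{\mu}_{\F_p(\MM)}$, and fix $\epsilon > 0$. Then $\mu/(t+\epsilon)$ lies in $\PP(\MM) \cap B_{\F_p(\MM)} = \co_p(\AA(\MM))$. Unfolding the definition of $\co_p$ as the smallest $p$-convex set containing $\AA(\MM)$, I would write $\mu/(t+\epsilon)$ as a finite iterated $p$-convex combination of molecules $\frac{\delta(x_i) - \delta(y_i)}{d(x_i,y_i)}$ involving only finitely many points $x_i, y_i \in \MM$. Letting $\NN$ be the finite subset of $\MM$ consisting of $F \cup \{0\}$ together with all these $x_i, y_i$, the very same combination now takes place inside $\co_p(\AA(\NN)) \subset B_{\F_p(\NN)}$, giving $\norm{\mu}_{\F_p(\NN)} \leq t + \epsilon$; sending $\epsilon \to 0$ concludes.

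The only step requiring any care is the claim that every element of the (algebraic, unclosed) $p$-convex hull $\co_p(\AA(\MM))$ can be represented using only finitely many elements of $\AA(\MM)$, so that the set $\NN$ above is genuinely finite. This is immediate by induction on the construction of $\co_p$ via iterated $2$-fold $p$-convex combinations of elements of $\AA(\MM)$, and constitutes the main (if minor) conceptual point of the proof.
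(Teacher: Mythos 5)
Your argument is correct and follows exactly the route the paper intends: the paper states the Fact without proof, noting only that it is an easy consequence of $\PP(\MM)\cap B_{\F_p(\MM)} = \co_p(\AA(\MM))$, and your proof is precisely the unfolding of that remark (norm-one linearization of the inclusion for one inequality, finiteness of $p$-convex combinations for the other).
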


\section{Algorithm for the computation of the Lipschitz-free \texorpdfstring{$p$-}-norm}\label{sec:algo}

The aim of this section is to prove Theorem~\ref{thm:algorithm}, which provides us with a new finite algorithm for the computation of a Lipschitz-free $p$-norm. This is based on building trees with vertices in the corresponding $p$-metric space. Before stating the main result, let us establish some notation.

\begin{definition}\label{def:trees}
Let $V$ be a finite set with a distinguished point $0\in V$. By $\TT(V,0)$ we denote the set of all the rooted trees $T = (V,E,0)$ with vertices $V$, edges $E$ and distinguished point $0\in V$, we say $0$ is the \emph{root} of $T$. Recall that a \emph{path (from $v_0$ to $v_n$)} in a graph $G = (V,E)$ is one-to-one sequence of vertices $v_0,\ldots,v_n$ such that $\{v_i,v_{i+1}\}\in E$ for $0\leq i\leq n-1$. Given $T = (V,E,0)\in \TT(V,0)$ and $x\in V$, we denote
\begin{itemize}    
    \item $\desc(x)$ the immediate descendants of $x$, that is vertices $v\in V$ such that there is a path $v_0,\ldots,v_n$ from $0$ to $v$ in $T$ with $v_{n-1} = x$;
    \item if $x\neq 0$, then $\pred(x)$ denotes the unique vertex $v\in V$ such that if $v_0,\ldots,v_n$ is the unique path from $0$ to $x$ in $T$ then $v_{n-1}=v$; we denote $e_x:=\{\pred(x),x\}$;
    \item by $\rank(x)\in \N_0$ we denote the length of the path from the root $0$ to the vertex $x$, that is, for $x=0$ we put $\rank(x)=0$ and if $x\neq 0$, there is a unique path $v_0,\ldots,v_n$ from $0$ to $x$ and we put $\rank(x) = n$;
    \item $\rank(T):=\max \{\rank(x)\colon x\in V\}$;
    \item \emph{leaves of $T$} are vertices from the set $\leaf(T):=\{x\in V\colon \desc(x)=\emptyset\}$;
    \item by $V_x$ we denote vertices in the maximal connected subgraph of $(V,E\setminus\{e_x\})$ containing $x$ as a vertex (that is, vertices in the subtree of $T$ ``rooted at the point $x$'').
\end{itemize}
Finally, note that there is a natural orientation of edges on any rooted tree $T$ given by oriented edges $\{(\pred_T(x),x)\colon x\in \MM\setminus\{0\}\}$. Thus, when it is convenient we work with $T$ as with an oriented tree and denote edges as $(x,y)$ instead of $\{x,y\}$, meaning by this that $x = \pred(y)$.\\

\medskip

\hspace{100pt}\includegraphics[scale=0.7]{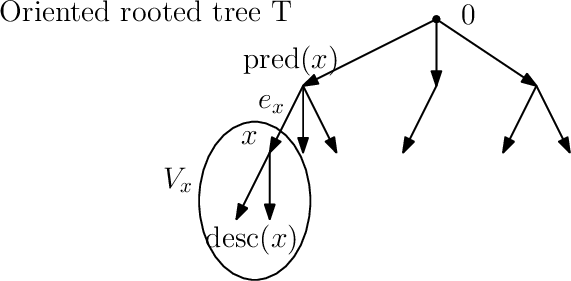}\\
If we want to emphasize to which $T\in \TT(V,0)$ the notions correspond, we write $E^T$, $\desc_T(x)$, $\pred_T(x)$, $\rank_T(x)$, $e_x^T$, and $V_x^T$ instead of $E$, $\desc(x)$, $\pred(x)$, $\rank(x)$, $e_x$ and $V_x$, respectively.

If $p\in (0,1]$ and $(\MM,d,0)$ is a nontrivial pointed $p$-metric space, we write $\TT(\MM)$ instead of $\TT(\MM,0)$. For $T=(\MM,E,0)\in\TT(\MM)$ and $e=\{x,y\}\in E$ we put $d(e):=d(x,y)$.
\end{definition}

The formula which we discovered is the following. It can be viewed as a finite algorithm for the computation of a Lipschitz-free $p$-norm with $|\TT(\MM)|$ steps.

\begin{theorem}\label{thm:algorithm}
Let $(\MM,d,0)$ be a nontrivial finite pointed $p$-metric space for some $p\in(0,1]$. Then for any $a\in \R^{\MM\setminus\{0\}}$ we have 
\[
\normb{Big}{\sum_{x\in \MM\setminus\{0\}} a_x\delta(x)}_{\F_p(\MM)}^p = \min \Big\{ \sum_{x\in \MM\setminus\{0\}} \absb{Big}{\Big(\sum_{y\in V_x^T} a_y\Big)d(e_x^T)}^p\colon T\in \TT(\MM)\Big\}.
\]
\end{theorem}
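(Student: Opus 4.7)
\emph{Upper bound.} My plan for the inequality $\leq$ is, given any $T\in\TT(\MM)$, to telescope $\delta(x)$ along the unique path from $0$ to $x$ in $T$. This yields
\[
\delta(x) = \sum_{v\in\MM\setminus\{0\},\ x\in V_v^T}\bigl(\delta(v)-\delta(\pred_T(v))\bigr),
\]
because $v\neq 0$ lies on that path if and only if $x\in V_v^T$. Rearranging $\sum_{x\neq 0} a_x\delta(x)$ by edges and applying the $p$-triangle inequality together with the isometry $\|\delta(v)-\delta(\pred_T(v))\|_{\F_p(\MM)}=d(e_v^T)$ will produce the upper bound for this particular $T$, and hence for the minimum over $\TT(\MM)$.

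\emph{Lower bound, setup.} For the inequality $\geq$, using finite-dimensionality of $\PP(\MM)$ together with $\PP(\MM)\cap B_{\F_p(\MM)}=\co_p(\AA(\MM))$, I would extract an optimal representation
\[
\sum_{x\neq 0} a_x\delta(x) = \sum_{i=1}^n c_i\bigl(\delta(x_i)-\delta(y_i)\bigr),\qquad \sum_{i=1}^n |c_i|^p\, d(x_i,y_i)^p = \normb{Big}{\sum_{x\neq 0} a_x\delta(x)}_{\F_p(\MM)}^p.
\]
The plan is then to transform this representation, without increasing the total sum, into one supported on the oriented edges of a single rooted tree $T\in\TT(\MM)$. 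For any such tree-supported representation, identifying coefficients of $\delta(x)$ and descending from the leaves of $T$ forces the coefficient on the edge $e_v^T$ to be exactly $\sum_{y\in V_v^T}a_y$, which delivers the lower bound. I first combine parallel edges, which is legal because $|c_1\pm c_2|^p\leq|c_1|^p+|c_2|^p$ for $p\leq 1$, so the edges of the representation may be assumed to form a simple graph $G$ on $\MM$.

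\emph{Cycle reduction.} The core step is: if $G$ contains a cycle $v_0,v_1,\dots,v_k=v_0$ with coefficients $\tilde c_1,\dots,\tilde c_k$ (signed relative to a fixed cycle orientation), I subtract from the representation the null identity $t\sum_{j=1}^k\bigl(\delta(v_j)-\delta(v_{j-1})\bigr)=0$, which replaces each $\tilde c_j$ by $\tilde c_j-t$ and alters only the cycle's contribution to
\[
\phi(t) = \sum_{j=1}^k|\tilde c_j-t|^p\, d(v_{j-1},v_j)^p.
\]
On each maximal interval on which no $\tilde c_j-t$ changes sign, every summand equals $(\pm(\tilde c_j-t))^p\, d(v_{j-1},v_j)^p$, which is a concave function of $t$ because $s\mapsto s^p$ is concave on $[0,\infty)$. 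Hence $\phi$ is concave on the interval containing $0$ and attains its minimum there at one of the two adjacent breakpoints $t^\ast$; at such $t^\ast$, $\phi(t^\ast)\leq \phi(0)$ and some $\tilde c_{j_0}-t^\ast=0$, so an edge of the cycle drops out. Iterating, $G$ becomes a forest, which I then extend to a rooted spanning tree $T\in\TT(\MM)$ by adding the missing edges with coefficient~$0$.

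The main obstacle will be this concavity argument in cycle reduction: it replaces the piecewise-linear reasoning familiar from the metric ($p=1$) case, and has to work uniformly for all $p\in(0,1]$, which it does thanks to concavity of $s\mapsto s^p$ on $[0,\infty)$. The remaining ingredients -- the telescoping identity, the $p$-triangle inequality, the finite-dimensional compactness, and the back-substitution $c_v=\sum_{y\in V_v^T}a_y$ from leaves toward the root -- are routine.
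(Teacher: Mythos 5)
Your proof is correct. It shares the paper's overall skeleton --- an optimal representation of $\sum a_x\delta(x)$ over $\AA(\MM)$ obtained from $\PP(\MM)\cap B_{\F_p(\MM)}=\co_p(\AA(\MM))$ and finite-dimensionality, a reduction of that representation to one supported on a spanning tree, and the back-substitution $c_v=\sum_{y\in V_v^T}a_y$ (which is exactly Lemma~\ref{lem:coefficients}) --- but the reduction-to-a-tree step is carried out by a genuinely different argument. The paper invokes the Carath\'eodory theorem for $p$-convex hulls from \cite{BBP} to pass to a linearly independent subset of $\AA(\MM)$ (Lemma~\ref{lem:pConvexHullBall}) and then identifies the bases of $\F_p(\MM)$ contained in $\AA(\MM)$ with spanning trees via an acyclicity-plus-edge-count argument (Lemma~\ref{lem:allBases}); you instead cancel cycles directly by perturbing the coefficients along a cycle and minimizing the resulting piecewise-concave function $\phi(t)$, which is driven to a breakpoint where an edge drops out. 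Your cycle reduction is sound: between consecutive breakpoints each summand is the $p$-th power of a nonnegative affine function of $t$, hence concave, so the minimum of $\phi$ over the interval containing $0$ sits at a breakpoint (coercivity of $\phi$ handles the case of an unbounded interval), and the edge count strictly decreases, so the process terminates. What your route buys is self-containedness (no appeal to the $p$-Carath\'eodory theorem) and a clear view of where $p\le 1$ enters --- concavity of $s\mapsto s^p$ on $[0,\infty)$, precisely the point at which the classical flow-cancellation argument for $p=1$ survives; what the paper's route buys is brevity, since the cited Carath\'eodory theorem performs the cancellation wholesale and the remaining two lemmas are purely linear-algebraic. Your upper bound via telescoping along root-to-vertex paths and the $p$-triangle inequality coincides with the easy direction of Lemma~\ref{lem:pConvexHullBall}.
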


Since we will use our algorithm in what follows, we use the following notation.

\begin{notation}Let $(\MM,d,0)$ be a nontrivial finite pointed $p$-metric space for some $p\in(0,1]$. Then for any $a\in \R^{\MM}$, $x\in \MM$ and $T\in \TT(\MM)$ we put
\[
c_T(x,a) := \sum_{y\in V_x^T} a_y\qquad \text{and}\qquad
T(a):=\Big(\sum_{x\in \MM\setminus\{0\}} \absb{Big}{c_T(x,a)d(e_x^T)}^p\Big)^{1/p}.\]
If we want to emphasize the $p$-metric on $\MM$, we write $T_{d}(a)$ instead of $T(a)$.
\end{notation}

Subsection~\ref{subsec:proofAlgorithm} is devoted to the proof of Theorem~\ref{thm:algorithm}. In Subsection~\ref{subsec:applicAlgorithm} we discuss the importance of the formula from Theorem~\ref{thm:algorithm} for the case of $p=1$ and some applications for the case of $p<1$, respectively.

\subsection{Proof of Theorem~\ref{thm:algorithm}}\label{subsec:proofAlgorithm} We begin with the following observation, which we shall later use for $Z = \AA(\MM)\subset \F_p(\MM)$.

\begin{lemma}\label{lem:pConvexHullBall}Let $X\neq \{0\}$ be a finite-dimensional $p$-Banach space for some $p\in(0,1]$ and $Z\subset B_X$ be a symmetric set such that $\co_p(Z) = B_X$. Let us denote by $\BB$ the set of all the algebraic bases of $X$ consisting of points from $Z$. Then for $x\in X$ we have
\begin{equation}\label{eq:formulaWithBasis}
\norm{x} = \min\Big\{\Big(\sum_{b\in \bb} \abs{a_b}^p\Big)^{1/p}\colon \bb\in\BB, a\in\R^\bb \text{ is such that } x = \sum_{b\in\bb} a_b b\Big\}.
\end{equation}
\end{lemma}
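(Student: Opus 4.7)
The plan is to establish the two inequalities between $\norm{x}$ and the right-hand side of \eqref{eq:formulaWithBasis}. The direction $\norm{x}\leq \left(\sum_{b\in\bb}|a_b|^p\right)^{1/p}$ for any admissible representation $x=\sum_{b\in\bb}a_b b$ is immediate from the $p$-triangle inequality combined with $\norm{b}\leq 1$ for $b\in Z\subset B_X$, so the content lies in producing, for each $x\neq 0$, an admissible representation whose coefficients satisfy $\sum_{b\in\bb}|a_b|^p\leq\norm{x}^p$.

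The first step is to obtain \emph{any} such expansion without insisting on the basis condition. Since $Z$ is symmetric, the $p$-convex and absolutely $p$-convex hulls of $Z$ coincide (see \cite[Lemma 1]{BBP}), and so the hypothesis $\co_p(Z)=B_X$ unfolds into the statement that every $x\in X$ admits a finite expansion $x=\sum_{i=1}^{k}a_i z_i$ with $z_i\in Z$ and $\sum|a_i|^p\leq\norm{x}^p$.

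The heart of the argument is a Carath\'eodory-type reduction showing that one can in fact choose such an expansion with $\{z_i\colon a_i\neq 0\}$ linearly independent. If this family is linearly dependent, pick $\mu\in\R^k$ with $\sum\mu_i z_i=0$, $\mu\neq 0$, and $\mu_i=0$ whenever $a_i=0$, and set
\[
h(t)=\sum_{i\colon\mu_i\neq 0}\absa{a_i+t\mu_i}^p.
\]
Every breakpoint $t_i=-a_i/\mu_i$ of $h$ is nonzero (because $\mu_i\neq 0$ forces $a_i\neq 0$), and $h(t)\to\infty$ as $|t|\to\infty$. For $p=1$, $h$ is convex and piecewise linear, so some global minimiser is a breakpoint; for $p<1$, the computation $h''(t)=p(p-1)\sum|a_i+t\mu_i|^{p-2}\mu_i^2<0$ on the smooth intervals shows $h$ is strictly concave between breakpoints, so its global minimiser on $\R$ must again be some breakpoint $t_{i^\ast}\neq 0$. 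Replacing $a$ by $a+t_{i^\ast}\mu$ produces a new expansion in which $a_{i^\ast}$ becomes $0$, no previously-zero coefficient is disturbed, and $\sum|a_i|^p$ does not increase. Iterating, the support of the coefficients strictly shrinks until the representing vectors are linearly independent.

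Finally, $\Span Z\supseteq\co_p(Z)=B_X$ spans $X$, so any linearly independent subset of $Z$ extends to an algebraic basis $\bb\in\BB$; assigning zero coefficients to the added basis vectors preserves the estimate $\sum_{b\in\bb}|a_b|^p\leq\norm{x}^p$, closing the reverse inequality and confirming that the infimum is attained. I expect the main obstacle to be the reduction step when $p<1$, where convex-analytic tools fail: the correct input is the strict \emph{concavity} of $h$ between breakpoints coming from $p(p-1)<0$.
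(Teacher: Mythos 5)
Your proof is correct, and its skeleton matches the paper's: the inequality $\norm{x}^p\le\sum_b|a_b|^p$ is the $p$-triangle inequality, the substance is producing a representation of $x$ by a linearly independent subfamily of $Z$ whose coefficients satisfy $\sum|a_i|^p\le\norm{x}^p$, and one finishes by extending that family to a basis and padding with zero coefficients. The genuine difference is how the middle step is obtained. The paper invokes the Carath\'eodory theorem for $p$-convex hulls (citing \cite{BBP}) to get a linearly independent $F\subset Z$ with $x\in\co_p(F)$, and then still needs a short separate argument to rule out repeated vectors $v_i=v_j$ in the resulting combination (for $p<1$ via strict subadditivity of $t\mapsto t^p$, for $p=1$ by merging coefficients). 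You instead prove the reduction from scratch: given a dependence $\sum\mu_i z_i=0$ supported on the current support, you minimize $h(t)=\sum|a_i+t\mu_i|^p$, using that $h$ is coercive and strictly concave between its breakpoints when $p<1$ (convex piecewise linear when $p=1$), so a global minimizer sits at a breakpoint, which annihilates one coefficient without increasing the $p$-sum. This is in effect a self-contained proof of the $p$-Carath\'eodory theorem; it is a little longer than the paper's citation but needs no external input, and it disposes of the repeated-vector issue automatically, since a repetition is itself a linear dependence. Both arguments are complete.
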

\begin{proof}
Observe that for any $\bb\in\BB$ and $a\in \R^\bb$ satisfying $x = \sum_{b\in\bb}a_{b} b$, using triangle $p$-inequality and the fact that $\bb\subset B_X$ we obtain $\|x\|^p\leq \sum_{b\in\bb}|a_{b}|^p$. Thus, it suffices to prove that for some $\bb\in\BB$ equality holds.

Pick $x\in X$, without loss of generality we can assume that $x\in S_X$. Since $S_X\subset \co_p(Z)$, using the Carath\'eodory theorem (for the proof in the case of $p<1$, see \cite[Theorem 1]{BBP}), there is a~linearly independent set $F\subset Z$ such that $x\in \co_p(F)$. Thus, there is $N\in\N$, $\lambda \in (0,1]^N$ and $v\in F^N$ such that $x = \sum_{i=1}^N \lambda_i v_i$ and $\sum_{i=1}^N \lambda_i^p = 1$.

We \emph{claim} that we may without loss of generality assume that $v_i\neq v_j$ for $i\neq j$. Indeed, if $p<1$ then supposing that $v_j = v_k$ for some $j\neq k$ we obtain
\[
1 = \norm{x}^p = \normb{Big}{(\lambda_j + \lambda_k)v_j + \sum_{i=1, i\neq j,k}^N \lambda_i v_i}^p \leq (\lambda_j + \lambda_k)^p + \sum_{i=1, i\neq j,k}^N \lambda_i^p < \sum_{i=1}^N \lambda_i^p = 1,
\]
a contradiction. On the other hand, if $p=1$ then we may put together the coefficients $\lambda_j$ corresponding to the same $v_i$. More precisely, if $J = \{i\geq 2\colon v_i = v_1\}$ is nonempty, we put $\lambda'_1:=\lambda_1 + \sum_{i\in J} \lambda_i$ and $\lambda'_i:=\lambda_i$ for $i\notin J\cup\{1\}$, then $u=\sum_{i\in\{1,\ldots,N\}\setminus J} \lambda'_i v_i$ and $\sum_{i\in\{1,\ldots,N\}\setminus J} \lambda'_i = 1$, so we may assume that $v_1\neq v_j$ for $j\geq 2$ and we proceed inductively. This proves the claim.

Thus, there is $\lambda\in [0,1]^F$ such that $x = \sum_{v\in F}\lambda_v v$ and $\sum_{v\in F} \lambda_v^p = 1$. Extending $F$ to a basis $\bb\in \BB$ and $\lambda$ to $a\in [0,1]^\bb$ by putting $\lambda_b = 0$ for $b\in\bb\setminus F$ we obtain $x = \sum_{b\in \bb} a_b b$ and $\|x\|^p = 1 = \sum_{b\in \bb} |a_b|^p$.
\end{proof}

The following aims at identifying all the basis of $\F_p(\MM)$ consisting of points from $Z= \AA(\MM)\subset \F_p(\MM)$. Similar observation was done also e.g. in \cite[Lemma 2.5]{AFGZ}, we provide here the reader with the short argument for the sake of completeness.

\begin{lemma}\label{lem:allBases}
Let $n\geq 1$ and $\{e_1,\ldots,e_n\}$ be a basis in a vector space $X$, $\{c_{i,j}\colon i,j=0,\ldots,n,\, i\neq j\}\subset \Rea\setminus\{0\}$, put $e_0:=0$ and consider the set
\[
W:= \Big\{\frac{e_i-e_j}{c_{i,j}}\colon i,j = 0,\ldots,n,\, i\neq j\Big\}.
\]
Then all the bases of $X$ consisting of elements from $W$ are exactly
\begin{equation}\label{eq:canonicalBasis}
\mathfrak{b}(T):=\Big\{\frac{e_i-e_j}{c_{i,j}}\colon \{e_i,e_j\}\in E\Big\},
\end{equation}
where $T = (V,E,0)\in\TT(V,0)$ and $V = \{e_i\colon i=0,\ldots,n\}$.
\end{lemma}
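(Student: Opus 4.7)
The approach is to identify each element of $W$ with an unordered edge of the complete graph on $V=\{e_0,\ldots,e_n\}$ and reduce to the classical graph-theoretic fact that an acyclic subgraph of the complete graph on $n+1$ vertices with $n$ edges must be a spanning tree.

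First I would observe that for each unordered pair $\{e_i,e_j\}$ the two elements $\frac{e_i-e_j}{c_{i,j}}$ and $\frac{e_j-e_i}{c_{j,i}}$ of $W$ are nonzero scalar multiples of each other, so any linearly independent $B\subset W$ can contain at most one representative per pair. This associates to each such $B$ a set $E_B$ of unoriented edges on $V$ with $|E_B|=|B|$.

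Next, supposing $B$ is a basis of $X$, we have $|E_B|=|B|=\dim X=n$, and I would argue $(V,E_B)$ must be acyclic, hence a spanning tree. Any putative cycle $e_{i_0},e_{i_1},\ldots,e_{i_k}=e_{i_0}$ in $(V,E_B)$ yields the telescoping identity $\sum_{\ell=0}^{k-1}(e_{i_{\ell+1}}-e_{i_\ell})=0$, each summand being a nonzero scalar multiple of the element of $B$ representing the edge $\{e_{i_\ell},e_{i_{\ell+1}}\}$; this contradicts linear independence of $B$. The resulting spanning tree, rooted at $0$, then gives $T\in\TT(V,0)$ with $B=\mathfrak{b}(T)$, once one agrees, per edge, on which of the two proportional representatives is meant by the notation $\mathfrak{b}(T)$.

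Conversely, given $T=(V,E,0)\in\TT(V,0)$, I would verify $\mathfrak{b}(T)$ is a basis: it has $|E|=n=\dim X$ elements, and it spans $X$ because for every $i\geq 1$ the vector $e_i=e_i-e_0$ is a telescoping sum of consecutive differences along the unique path in $T$ from $0$ to $e_i$, each such difference being a nonzero scalar multiple of the corresponding element of $\mathfrak{b}(T)$. The only genuine obstacle is bookkeeping the orientation ambiguity in the notation $\mathfrak{b}(T)$: once a single representative per edge is fixed (e.g., using the $\pred$-to-child orientation inherent to a rooted tree), the correspondence between $\TT(V,0)$ and bases of $X$ drawn from $W$ becomes a direct translation of the graph-theoretic dichotomy \emph{acyclic on $n+1$ vertices with $n$ edges equals spanning tree}.
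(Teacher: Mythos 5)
Your proposal is correct and follows essentially the same route as the paper: both identify a linearly independent subset of $W$ with an acyclic edge set on the $n+1$ vertices, use the count $|E|=|V|-1$ to conclude the graph is a spanning tree, and check conversely that each $\mathfrak{b}(T)$ is a basis. You merely make explicit some details the paper leaves as ``easy to observe'' (the telescoping identity along a cycle, the spanning argument via paths from the root, and the one-representative-per-unordered-pair bookkeeping that the paper sidesteps by normalizing $c_{i,j}=1$).
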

\begin{proof}First, since the linear independence does not depend on the choice of $c_{i,j}$, we may and do without loss of generality assume that $c_{i,j}=1$ for every $i,j=0,\ldots n$ with $i\neq j$.
It is easy to observe that for each $T\in\TT(V,0)$, $\mathfrak{b}(T)$ is a basis of $X$. On the other hand, if $\mathfrak{b}\subset W$ is a basis of $X$, the graph $G = (V,E)$ with edges $E = \{\{e_i,e_j\}\colon e_i-e_j\in \mathfrak{b}\}$ does not contain a cycle due to linear independence of $\mathfrak{b}$. Moreover, we have $|V| - 1 = |\mathfrak{b}| = |E|$ which implies that $G$ is connected (in general, for an acyclic graph we have $|E| = |V|-k$, where $k$ is the number of connected components). Thus, $G$ is a tree which easily implies that we have $\mathfrak{b} = \mathfrak{b}(T)$ for $T = (V,E,0)$.
\end{proof}

Our final step towards the proof of Theorem~\ref{thm:algorithm} is to find out what are the coefficients of a point in $\F_p(\MM)$ with respect to a basis consisting of elements from $\AA(\MM)$.

\begin{lemma}\label{lem:coefficients}Let $n\geq 1$ and $\{e_1,\ldots,e_n\}$ be a basis in a vector space $X$, $V = \{0,e_1,\ldots,e_n\}$ and $T = (V,E,0)\in \TT(V,0)$. Suppose that for some $a,b\in \Rea^n$ and $c\in(\Rea\setminus\{0\})^n$ we have
\[
\sum_{i=1}^n a_i e_i = \sum_{i=1}^n b_i \frac{e_i-\pred_T(e_i)}{c_i}.
\]
Then $b_i = c_i\sum_{\{j\colon e_j\in V_{e_i}^T\}} a_j$ for every $i=1,\ldots,n$.
\end{lemma}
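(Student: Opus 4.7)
The plan is to use the linear independence of $\{e_1,\ldots,e_n\}$ to turn the single vector equation into a system of $n$ scalar equations, one for each basis coefficient, and then to invert this system using the tree structure of $T$.

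Writing $\beta_i := b_i/c_i$, I would first expand the right-hand side. Each term $\beta_i(e_i-\pred_T(e_i))$ contributes $+\beta_i$ to the coefficient of $e_i$ and, when $\pred_T(e_i)\neq 0$, $-\beta_i$ to the coefficient of $\pred_T(e_i)$. Matching with the left-hand side in the basis $\{e_1,\ldots,e_n\}$ gives the system
\[
a_j \;=\; \beta_j \;-\;\sum_{i:\,\pred_T(e_i)=e_j}\beta_i \qquad (j=1,\ldots,n).
\]
The goal then becomes to show $\beta_k = \sum_{j:\,e_j\in V_{e_k}^T} a_j$ for every $k$.

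The key combinatorial fact is that $V_{e_k}^T$ is precisely $\{e_k\}$ together with the union of $V_{e_i}^T$ over all immediate descendants $e_i$ of $e_k$, and that for any $e_j\in V_{e_k}^T$ with $e_j\neq e_k$ the predecessor $\pred_T(e_j)$ lies again in $V_{e_k}^T$ (since removing only the edge $e_{e_k}^T$ keeps the subtree rooted at $e_k$ intact). Using this, I would sum the scalar equations over all $j$ with $e_j\in V_{e_k}^T$. The first sum on the right becomes $\sum_{e_j\in V_{e_k}^T}\beta_j$; in the double sum, each pair $(j,i)$ with $e_j\in V_{e_k}^T$ and $\pred_T(e_i)=e_j$ corresponds bijectively to a single vertex $e_i\in V_{e_k}^T\setminus\{e_k\}$. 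Thus the double sum collapses to $\sum_{e_i\in V_{e_k}^T,\,i\neq k}\beta_i$, and after cancellation only the single term $\beta_k$ survives on the right, giving exactly $\sum_{j:\,e_j\in V_{e_k}^T} a_j = \beta_k$.

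No real obstacle should appear; the entire argument is pure bookkeeping once the system is written down. If one prefers, the same identity can be proved by induction on $\rank_T(T)-\rank_T(e_k)$, the base case being leaves $e_k$ (where the equation reads $a_k=\beta_k$ and $V_{e_k}^T=\{e_k\}$), and the inductive step following immediately from the recurrence above. Multiplying through by $c_i$ to return to the $b_i$ then yields the claimed formula $b_i=c_i\sum_{j:\,e_j\in V_{e_i}^T} a_j$.
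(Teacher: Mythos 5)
Your proof is correct, but it takes a genuinely different route from the paper's. The paper argues by induction on $n$: it peels off a leaf $e_{n+1}$ with predecessor $e_n$, reads off $b_{n+1}=c_{n+1}a_{n+1}$ by comparing the coefficient of $e_{n+1}$, and then passes to the smaller tree with the modified coefficient $\widehat{a_n}=a_n+a_{n+1}$. You instead write down the full coefficient-matching system $a_j=\beta_j-\sum_{i\colon \pred_T(e_i)=e_j}\beta_i$ at once and invert it directly by summing over the subtree $V_{e_k}^T$, using the bijection between pairs $(j,i)$ with $e_j\in V_{e_k}^T$, $\pred_T(e_i)=e_j$ and the vertices $e_i\in V_{e_k}^T\setminus\{e_k\}$ to collapse the double sum; the telescoping leaves exactly $\beta_k$. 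Both arguments are sound and of comparable length. Your direct approach makes the underlying linear-algebraic structure more transparent (the change-of-basis matrix is explicitly inverted via the subtree sums) and avoids the mild bookkeeping of relabelling and modifying coefficients in the inductive step; the paper's leaf-removal induction is perhaps easier to verify line by line since each step only involves one vertex. Your closing remark that one could alternatively induct on $\rank_T(T)-\rank_T(e_k)$ is also a valid third variant, closer in spirit to the paper's proof.
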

\begin{proof}The proof is by induction with respect to $\dim X = n$. For $n=1$ this is trivial, because $\TT(V,0)$ contains only the tree $(V,\{e_1\},0)$ and so we have $a_1e_1 = b_1\tfrac{e_1-0}{c_1}$ which implies that $b_1 = c_1a_1$.

Suppose Lemma holds for $n$ and let us prove it for $n+1$. We may without loss of generality assume that $e_{n+1}\in \leaf(T)$ and $e_n = \pred_T(e_{n+1})$ (otherwise we just reenumerate points in the basis). Comparing the coefficients before $e_{n+1}$ we obtain that $a_{n+1} = \tfrac{b_{n+1}}{c_{n+1}}$ and therefore $b_{n+1} = c_{n+1}a_{n+1}$. Further, consider $V' = V\setminus\{e_{n+1}\}$ and the tree $T' = (V',E',0)\in \TT(V',0)$ with $E' = E\setminus\{\{e_n,e_{n+1}\}\}$. Then we have
\[\begin{split}
\sum_{i=1}^{n-1} a_ie_i & + (a_n + a_{n+1})e_n = \sum_{i=1}^{n} a_ie_i + \frac{b_{n+1}}{c_{n+1}}e_n \\ & = \sum_{i=1}^n b_i \frac{e_i-\pred_T(e_i)}{c_i} - \frac{b_{n+1}}{c_{n+1}}e_n + \frac{b_{n+1}}{c_{n+1}}e_n = \sum_{i=1}^n b_i \frac{e_i-\pred_{T'}(e_i)}{c_i}.
\end{split}\]
Thus, if we put $\widehat{a_i}:=a_i$ for $i\leq n-1$ and $\widehat{a_n}:=a_n + a_{n+1}$, by the inductive assumption for every $i\leq n$ we obtain
\[
b_i = c_i\sum_{\{j\colon e_j\in V_{e_i}^{T'}\}} \widehat{a_j} = c_i \sum_{\{j\colon e_j\in V_{e_i}^T\}} a_j.
\]
This finishes the inductive step.
\end{proof}

Let us finish now this subsection with the proof of Theorem~\ref{thm:algorithm}.

\begin{proof}[Proof of Theorem~\ref{thm:algorithm}]
Pick $a\in \R^{\MM\setminus\{0\}}$ and put $\mu = \sum_{x\in\MM\setminus\{0\}} a(x)\delta(x)$. Note that $\co_p(\AA(\MM)) = \overline{\co_p}(\AA(\MM)) = B_{\F_p(\MM)}$ (see \cite[Corollary 4.11]{AACD}), so by Lemma~\ref{lem:pConvexHullBall}, if we denote by $\BB$ all the basis of $\F_p(\MM)$ consisting of the points from $\AA(\MM)$, we obtain
\begin{equation}\label{eq:formulaFirst}
\norm{\mu}^p = \min\Big\{\sum_{b\in \bb} \abs{c_b}^p\colon \bb\in\BB, c\in\R^\bb \text{ is such that } \mu = \sum_{b\in\bb} c_b b\Big\}.
\end{equation}
By Lemma~\ref{lem:allBases}, we obtain that $\BB$ consists of $\{\tfrac{\delta(x)-\delta(y)}{d(x,y)}\colon \{x,y\}\in E^T\}$ for all the choices of $T\in\TT(\MM)$. Since in \eqref{eq:formulaFirst} there are absolute values of the coefficients $c_b$, we may consider in the formula \eqref{eq:formulaFirst} only minimum over basis $\bb$ of the form 
\[
\bb(T) := \{\tfrac{\delta(x)-\delta(\pred_T(x))}{d(x,\pred_T(x))}\colon x\in \MM\setminus\{0\}\} = \{\tfrac{\delta(x)-\delta(\pred_T(x))}{d(e_x^T)}\colon x\in \MM\setminus\{0\}\}
\]
for all the choices of $T\in\TT(\MM)$. Thus, $\|\mu\|^p$ is the minimum of all the numbers $\sum_{x\in\MM\setminus\{0\}} \abs{c_x}^p$, where $c\in\R^{\MM\setminus\{0\}}$ is such that
\begin{equation}\label{eq:formulaSecond}
\mu = \sum_{x\in \MM\setminus\{0\}} c_x  \frac{\delta(x)-\delta(\pred_T(x))}{d(e_x^T)}
\end{equation}
for some $T\in\TT(\MM)$. Finally, by Lemma~\ref{lem:coefficients} we observe that \eqref{eq:formulaSecond} implies
\[
c_x = d(e_x)\sum_{y\in V_x^T}a(y),\qquad x\in \MM\setminus\{0\}.
\]
Thus, we obtain the formula from the statement of Theorem~\ref{thm:algorithm}.
\end{proof}

\subsection{Importance of the formula from Theorem~\ref{thm:algorithm}}\label{subsec:applicAlgorithm}
First, let us discuss the importance of the algorithm from Theorem~\ref{thm:algorithm} for the case of $p=1$. The problem of finding the value of $\norm{\mu}_{\F_1(\MM)}$ for some $\mu\in\PP(\MM)$ is well-known to be a linear programming problem and for such there are well-known numerical algorithms, see e.g. \cite[Section 5.10]{MGbook} for the classical ``simplex method'', there are even (more difficult to understand) algorithms of polynomial-time, see e.g. \cite{W05}. Moreover, one can also use methods concerning the ``minimum cost maximum flow'' problem, which gives some more possibilities of an evaluation of the Lipschitz-free norm, see e.g. \cite{CCPRSbook}. On the other hand, it seems that all of the algorithms described above are quite involved when compared with our Theorem~\ref{thm:algorithm}, which still in some situations enables us to come up with reasonable formulas, see e.g. Corollary~\ref{cor:twoPointsFormula} and the comment below.

For $p<1$ the issue of finding the value of $\norm{\mu}_{\F_p(\MM)}$ for some $\mu\in\PP(\MM)$ is no more linear programming problem and the problem itself seems to be much more complicated. Even though the importance of Theorem~\ref{thm:algorithm} is most probably limited for researchers in numerical analysis (the algorithm from Theorem~\ref{thm:algorithm} is of exponential time), we shall see later that it enables us to substantially push forward our theoretical knowledge of Lipschitz-free $p$-spaces. Let us start with the following formula.

\begin{corollary}\label{cor:twoPointsFormula}Let $p\in (0,1]$ and $(\MM,d,0)$ be a $p$-metric space with $\MM=\{0,x,y\}$. Denote $d_x = d(x,0)$, $d_y=d(y,0)$ and $d_{xy}=d(x,y)$. Then for any $a,b\in\R$ we have
\begin{equation}\label{eq:twoPointsFormula}\begin{split}
\norm{a\delta(x) + b\delta(y)}_{\F_p(\MM)}^p = \min\{|ad_x|^p + |bd_y|^p, & |(a+b)d_x|^p + |bd_{xy}|^p,\\&  |(a+b)d_y|^p + |ad_{xy}|^p\}.
\end{split}\end{equation}
\end{corollary}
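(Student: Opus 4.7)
The plan is to apply Theorem~\ref{thm:algorithm} directly and simply enumerate the rooted trees in $\TT(\MM)$. Since $\MM = \{0, x, y\}$ has only three vertices and the root is fixed at $0$, any $T \in \TT(\MM)$ is a rooted tree on three vertices with two edges and no cycles, and there are exactly three such trees:
\begin{enumerate}[(i)]
    \item $T_1$ with edges $\{0,x\}$ and $\{0,y\}$ (the ``star'' rooted at $0$);
    \item $T_2$ with edges $\{0,x\}$ and $\{x,y\}$ (the path $0 \to x \to y$);
    \item $T_3$ with edges $\{0,y\}$ and $\{y,x\}$ (the path $0 \to y \to x$).
\end{enumerate}

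Next I would, for each $T_i$, read off the subtree-vertex sets $V_v^{T_i}$ and the edge-length $d(e_v^{T_i})$ for $v \in \{x,y\}$, and compute the quantity $T_i(a\delta(x) + b\delta(y))^p$ from the definition. For $T_1$, since $x$ and $y$ are both leaves attached to the root, $V_x^{T_1} = \{x\}$ and $V_y^{T_1} = \{y\}$, giving $|ad_x|^p + |bd_y|^p$. For $T_2$, $y$ hangs below $x$, so $V_x^{T_2} = \{x,y\}$ and $V_y^{T_2} = \{y\}$, with edge lengths $d_x$ and $d_{xy}$, giving $|(a+b)d_x|^p + |bd_{xy}|^p$. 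The third tree $T_3$ is symmetric and produces $|(a+b)d_y|^p + |ad_{xy}|^p$.

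Finally, taking the minimum over the three trees and invoking Theorem~\ref{thm:algorithm} yields formula \eqref{eq:twoPointsFormula}. There is no genuine obstacle here; the corollary is a direct specialization of the algorithm to the case $|\MM| = 3$, and the only thing one needs to verify carefully is that $\TT(\MM)$ really consists of these three trees (which follows because a labeled tree on three vertices has exactly three isomorphism realizations, determined by which vertex plays the role of the middle node, and the root is prescribed).
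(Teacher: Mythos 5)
Your proposal is correct and matches the paper's proof exactly: the paper likewise applies Theorem~\ref{thm:algorithm}, lists the same three trees $E_1=\{(0,x),(0,y)\}$, $E_2=\{(0,x),(x,y)\}$, $E_3=\{(0,y),(y,x)\}$, and identifies each with one of the three expressions in \eqref{eq:twoPointsFormula}. No issues.
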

\begin{proof}We apply Theorem~\ref{thm:algorithm} and realize that $\TT(\MM) = \{(\MM,E_i,0)\colon i=1,2,3\}$, where $E_1:=\{(0,x),(0,y)\}$, $E_2:=\{(0,x),(x,y)\}$ and $E_3:=\{(0,y),(y,x)\}$. The first expression in \eqref{eq:twoPointsFormula} corresponds to $E_1$, the second to $E_2$ and the third to $E_3$.
\end{proof}

Corollary~\ref{cor:twoPointsFormula} may be seen as a generalization of \cite[Lemma 11]{CJ}, where basically the same formula is given for $p=1$. In the case of $p=1$ one can easily find out (and the authors from \cite{CJ} did) which of the three expression in \eqref{eq:twoPointsFormula} is the one where minimum is attained just based on the knowledge of the coefficients. This is far from being the case for $p<1$ as witnessed e.g. by Theorem~\ref{thm:positiveCoef}.

In the $p=1$ case it is well-known that given a finite metric space $\MM$ and coefficients $a\in[0,\infty)^\MM$, we have $\norm{\sum_{x\in\MM} a_x\delta(x)}_{\F(\MM)} = \sum a_xd(x,0)$. The following result shows that this is not true for $p<1$ and moreover it even characterizes those $p$-metric spaces, for which an analogy holds.

\begin{theorem}\label{thm:positiveCoef}
Let $0<p<1$, $(\MM,d,0)$ be a pointed $p$-metric space with $|\MM|\geq 3$. Then the following assertions are equivalent.
\begin{enumerate}[(a)]
    \item\label{it:positiveBehavesOk} For any $a\in [0,\infty)^\MM$ such that $\sum_{x\in\MM} \big(a_x d(x,0)\big)^p<\infty$, we have
    \[
    \norm{\sum_{x\in\MM} a_x\delta(x)}_{\F_p(\MM)}^p = \sum_{x\in\MM} \big(a_x d(x,0)\big)^p.
    \]
    \item\label{it:twoPositiveBehavesOk} For any $\alpha\geq 0$ and two distinct $x,y\in\MM\setminus\{0\}$ we have
     \[
     \norm{\delta(x) + \alpha\delta(y)}_{\F_p(\MM)}^p = d^p(x,0) + \alpha^pd^p(y,0).
     \]
     \item\label{it:conditionPositiveBehavesOk} For every $x\in\MM\setminus\{0\}$ we have $d(x,0) = d(x,\MM\setminus\{x\})$.
\end{enumerate}
\end{theorem}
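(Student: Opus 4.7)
The plan is to verify the chain of implications (a)$\Rightarrow$(b)$\Rightarrow$(c)$\Rightarrow$(a). The first implication (a)$\Rightarrow$(b) is merely a specialization: apply (a) to the coefficient vector which is $1$ at $x$, $\alpha$ at $y$, and zero elsewhere. So the real content is in the other two implications, both of which should flow out of Theorem~\ref{thm:algorithm}.

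For (c)$\Rightarrow$(a), the key observation is that the \emph{star tree} $T_\star\in\TT(\MM)$ (where the root $0$ is the direct predecessor of every other vertex) satisfies $c_{T_\star}(x,a)=a_x$ and $d(e_x^{T_\star})=d(x,0)$, so that $T_\star(a)^p=\sum_{x}a_x^p d(x,0)^p$. To obtain (a) in the finite case, it therefore suffices to check that $T_\star$ realizes the minimum in Theorem~\ref{thm:algorithm} whenever $a\geq 0$: for any tree $T\in\TT(\MM)$ and any $x\in\MM\setminus\{0\}$, nonnegativity of $a$ gives $c_T(x,a)=\sum_{y\in V_x^T}a_y\geq a_x$, while (c) gives $d(e_x^T)=d(x,\pred_T(x))\geq d(x,\MM\setminus\{x\})=d(x,0)$. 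Multiplying and raising to the $p$-th power gives the inequality $T(a)^p\geq T_\star(a)^p$ term-by-term. To transfer this to a general $p$-metric space $\MM$ and to $a$ with possibly infinite support, note first that (c) is inherited by every finite subspace $0\in \NN\subset\MM$ (since $d(x,0)\geq d(x,\NN\setminus\{x\})\geq d(x,\MM\setminus\{x\})=d(x,0)$), so Fact~\ref{fact:wlogFinite} yields (a) for any finitely supported $a$; the general convergent case follows from the $p$-triangle inequality and passing to the limit in the finite partial sums.

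The main obstacle is the remaining implication (b)$\Rightarrow$(c), which I plan to establish by contraposition. Suppose (c) fails, so that there exist $x\in\MM\setminus\{0\}$ and $z\in\MM\setminus\{0,x\}$ with $d(x,z)<d(x,0)$. Working in the three-point subspace $\NN=\{0,x,z\}$ via Fact~\ref{fact:wlogFinite} and Corollary~\ref{cor:twoPointsFormula}, setting $d_x=d(x,0)$, $d_z=d(z,0)$, $d_{xz}=d(x,z)$, I get
\[\norm{\delta(x)+\alpha\delta(z)}^p_{\F_p(\MM)}\leq \min\{d_x^p+\alpha^p d_z^p,\,(1+\alpha)^p d_x^p+\alpha^p d_{xz}^p,\,(1+\alpha)^p d_z^p+d_{xz}^p\}\]
for every $\alpha\geq 0$. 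If (b) were to hold, the left-hand side would equal the first term $d_x^p+\alpha^p d_z^p$, and comparison with the third term would give
\[d_x^p-d_{xz}^p\leq \bigl((1+\alpha)^p-\alpha^p\bigr)d_z^p\]
for every $\alpha\geq 0$. Since $p<1$, concavity of $t\mapsto t^p$ on $[0,\infty)$ forces $(1+\alpha)^p-\alpha^p\to 0$ as $\alpha\to\infty$, and letting $\alpha\to\infty$ yields $d_x\leq d_{xz}$, contradicting the choice of $z$.

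The only nontrivial step is this asymptotic argument using $p<1$; everything else is a direct application of Theorem~\ref{thm:algorithm}, Corollary~\ref{cor:twoPointsFormula}, and Fact~\ref{fact:wlogFinite}. The proof thereby makes essential use of the new algorithm, and along the way highlights how drastically the behavior of nonnegatively-supported combinations in $\F_p(\MM)$ differs from the classical case $p=1$, where (a) holds unconditionally.
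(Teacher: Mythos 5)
Your proposal is correct and follows essentially the same route as the paper: (c)$\Rightarrow$(a) via the term-by-term comparison $c_T(x,a)\geq a_x$ and $d(e_x^T)\geq d(x,0)$ against the star tree using Theorem~\ref{thm:algorithm} and Fact~\ref{fact:wlogFinite}, and (b)$\Rightarrow$(c) via Corollary~\ref{cor:twoPointsFormula} together with $(1+\alpha)^p-\alpha^p\to 0$ (the paper picks a single sufficiently large $\alpha$ rather than passing to the limit, but the inequality exploited is identical). No gaps.
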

\begin{proof}[Proof of Theorem~\ref{thm:positiveCoef}]
\ref{it:conditionPositiveBehavesOk}$\Rightarrow$\ref{it:positiveBehavesOk}: Note that it suffices to prove the assertion for the case when $F=\{x\in\MM\colon a_x\neq 0\}$ is a finite set, because in the general case we can then pass to the limit on both sides (limit of finite sums with respect to net indexed by finite subsets of $\MM$). Put $\mu = \sum_{x\in F}a_x\delta(x)$. By Fact~\ref{fact:wlogFinite}, we may without loss of generality assume that $\MM$ is a finite metric space. But then, for every $T\in\TT(\MM)$ using \ref{it:conditionPositiveBehavesOk} we obtain
\[
\sum_{x\in\MM\setminus\{0\}} \Big(d(e_x)\sum_{y\in  V_x} a_y\Big)^p\geq \sum_{x\in\MM\setminus\{0\}} \big(d(e_x)a_x\big)^p\geq \sum_{x\in\MM\setminus\{0\}} \big(d(x,0)a_x\big)^p,
\]
so by Theorem~\ref{thm:algorithm} we obtain $\norm{\mu}_{\F_p(\MM)}^p\geq \sum_{x\in\MM} \big(d(x,0)a_x\big)^p$ and the other inequality follows easily from the triangle $p$-inequality.\\
\ref{it:positiveBehavesOk}$\Rightarrow$\ref{it:twoPositiveBehavesOk}: is trivial.\\
\ref{it:twoPositiveBehavesOk}$\Rightarrow$\ref{it:conditionPositiveBehavesOk}: Suppose there are distinct point $x,y\in\MM\setminus\{0\}$ with $d(x,y)<d(x,0)$. Then we have $d(x,0)^p - d(x,y)^p>0$ and so, since $\lim_{\alpha\to\infty} (1+\alpha)^p - \alpha^p = 0$, we may find $\alpha>0$ such that
\[
\frac{d(x,0)^p - d(x,y)^p}{d(y,0)^p} > (1+\alpha)^p - \alpha^p.
\]
But then for $\mu = \delta(x) + \alpha\delta(y)\in \F_p(\MM)$, using \eqref{eq:twoPointsFormula}, we obtain 
\[
\norm{\mu}_{\F_p(\MM)}^p\leq \norm{\mu}_{\F_p(\{0,x,y\})}^p\leq (1+\alpha)^pd^p(y,0) + d^p(x,y) < d^p(x,0) + \alpha^p d^p(y,0),
\]
so \ref{it:twoPositiveBehavesOk} does not hold.
\end{proof}

\begin{remark}\label{rem:amenability}
Note that Theorem~\ref{thm:positiveCoef} is related to the example of a tuple of metric spaces $\NN\subset\MM$ such that the canonical embedding of $\F_p(\NN)$ into $\F_p(\MM)$ is not an isometry, see \cite[Theorem 6.1]{AACD}. Indeed, the most technical part of the proof of \cite[Theorem 6.1]{AACD} is contained in \cite[Proposition 4.15]{AACD}, which easily follows from our Theorem~\ref{thm:positiveCoef}, because for the very special case of a finite metric space $\MM = \{0,x_1,\ldots,x_d\}$ with distances $0$ and $1$, we obtain from Theorem~\ref{thm:positiveCoef} that the $p$-norm $\|\cdot\|_{(p)}$ on $\R^d$ given for $a\in\R^d$ by the formula $\|a\|_{(p)}:=\norm{\sum_{i=1}^n a_i\delta(x_i)}_{\F_p(\MM)}$ satisfies both (a) and (b) from \cite[Proposition 4.15]{AACD}.
\end{remark}

\section{\texorpdfstring{$p$-}-amenability}\label{sec:pAmen}

Following \cite[Definition 2.5]{AACD21}, we say that a subset $\NN$ of a $p$-metric space $\MM$ is \emph{$p$-amenable in
$\MM$ with constant $C <\infty$} if $L_j:\F_p(\NN)\to\F_p(\MM)$ is an isomorphism and 
$\norm{L_j^{-1}}\leq C$, where $L_j$ is the
canonical linearization of the inclusion map $j:\NN\to\MM$. If $C=1$, we say that $\NN$ is \emph{isometrically $p$-amenable in $\MM$}. Recall that for $p=1$ any subset of a metric space is isometrically $p$-amenable, while for $p<1$ there are known metric spaces and their subsets which are not isometrically $p$-amenable and it is open whether any subset of a $p$-metric space is $p$-amenable with some constant, see \cite[Section 6]{AACD}. As we mentioned in Remark~\ref{rem:amenability}, our Theorem~\ref{thm:algorithm} seems to provide us with a new tool which could be useful when dealing with $p$-amenability. The aim of this section is to explore it in a greater detail. We start with some preliminary general observations which shall be used in later subsections. The main outcomes of this section are the following: in Subsection~\ref{subsec:isometricAmen} we give a characterization of those pointed $p$-metric spaces consisting of $3$ points which are isometrically $p$-amenable in any superspace (Theorem~\ref{thm:isomAmen}); in Subsection~\ref{subsec:reductionToTrees} we prove that when dealing with an optimal constant of $p$-amenability, it suffices to consider $p$-metric spaces given by weighted trees and their subspaces given by leafes in this tree (Theorem~\ref{thm:reductionExamples}); in Subsection~\ref{subsec:estimates} we deal with several estimates of the constant of $p$-amenability including  Theorem~\ref{thm:metricSolution}, which is a quantitative refinement of Theorem~\ref{thmx:main} mentioned in the Introduction.

In order to formulate our results in a short and elegant way, we introduce the following quantities.

\begin{definition}Let $0<p\leq q \leq 1$, $(\MM,d,0)$ be a finite pointed $q$-metric space and $0\in \NN\subset \MM$ be its subspace and $n,k\in\N$, $n\leq k$. Then
\[\begin{split}
\amen_p(\NN,\MM):= & \min\{C\colon \NN\text{ is $p$-amenable in }\MM\text{ with constant }C\},\\
\absamen_p^q(\NN):= & \sup\{\amen_p(\NN,\MM')\colon \MM'\text{ is a $q$-metric space such that }\NN\subset\MM'\},\\
\absamen_p^q(n,k):= & \sup\{\amen_p(\NN,\MM)\colon \NN\subset \MM \text{ are $q$-metric spaces, }|\NN\setminus\{0\}|\leq n\\
& \qquad\qquad\qquad \text{ and }|\MM\setminus\{0\}|\leq k\},\\
\absamen_p^q(n):= & \sup\{\absamen_p^q(\NN)\colon \NN \text{ is a $q$-metric space and }|\NN\setminus\{0\}|\leq n\}.
\end{split}\]
Instead of $\absamen_p^p(\NN)$, $\absamen_p^p(n,k)$ and $\absamen_p^p(n)$ we write simply $\absamen_p(\NN)$, $\absamen_p(n,k)$ and $\absamen_p(n)$, respectively.
\end{definition}

Let us start with the following two basic observations.

\begin{fact}\label{fact:choiceOfZero}
Let $\MM$ be a pointed $p$-metric space and $\NN\subset \MM$ its subspace. Then $\amen_p(\NN,\MM)$ does not depend on the choice of the base point from $\NN$.
\end{fact}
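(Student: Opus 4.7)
The plan is to show that changing the distinguished point within $\NN$ induces canonical surjective linear isometries of the Lipschitz-free $p$-spaces of both $\NN$ and $\MM$, and that these isometries intertwine the canonical linearizations of the inclusion $j\colon \NN \hookrightarrow \MM$. Once this diagram is in place, the operator norms of $L_j$ and of $L_j^{-1}$ manifestly cannot depend on the chosen base point, so the amenability constant is the same for any choice of $0 \in \NN$.

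First I would fix two candidate base points $0, 0' \in \NN$ and, for the pointed space $(\MM, d, 0)$, consider the map $f\colon \MM \to \F_p(\MM, 0')$ given by $f(x) := \delta^{0'}_\MM(x) - \delta^{0'}_\MM(0)$, where $\delta^{0'}_\MM$ denotes the canonical embedding into $\F_p(\MM)$ pointed at $0'$. Then $f(0) = 0$ and $f$ is $1$-Lipschitz because $\delta^{0'}_\MM$ is an isometry. The universal property recalled in Section~\ref{sec:prelim} (applied with target the $p$-Banach space $\F_p(\MM, 0')$) yields a linear operator $\Phi_\MM\colon \F_p(\MM, 0) \to \F_p(\MM, 0')$ of norm at most $1$ with
\[
\Phi_\MM\bigl(\delta^0_\MM(x)\bigr) = \delta^{0'}_\MM(x) - \delta^{0'}_\MM(0), \qquad x \in \MM.
\]
The symmetric construction (swapping $0$ and $0'$) gives a norm-at-most-$1$ map in the reverse direction, and a direct check on generators confirms that the two operators are mutually inverse; hence $\Phi_\MM$ is a surjective linear isometry. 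I would define $\Phi_\NN\colon \F_p(\NN, 0) \to \F_p(\NN, 0')$ in the same way.

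Next I would verify the intertwining identity $\Phi_\MM \circ L_j^0 = L_j^{0'} \circ \Phi_\NN$, where $L_j^0$ and $L_j^{0'}$ are the canonical linearizations of $j$ with respect to the two base points. This is immediate by evaluating both sides on the generator $\delta^0_\NN(x)$: both produce $\delta^{0'}_\MM(x) - \delta^{0'}_\MM(0)$ in $\F_p(\MM, 0')$, and the identity then extends by linearity and continuity from $\PP(\NN)$ to all of $\F_p(\NN, 0)$. Since $\Phi_\MM$ and $\Phi_\NN$ are surjective isometries, $L_j^0$ is a linear isomorphism iff $L_j^{0'}$ is, and in that case $(L_j^{0'})^{-1} = \Phi_\NN \circ (L_j^0)^{-1} \circ \Phi_\MM^{-1}$ gives $\norm{(L_j^{0'})^{-1}} = \norm{(L_j^0)^{-1}}$. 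Together with $\norm{L_j^0} = \norm{L_j^{0'}} = 1$ (canonical linearizations of $1$-Lipschitz inclusions), this yields equality of the two amenability constants.

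I do not expect a genuine obstacle: the whole argument reduces to the universal property of $\F_p$ together with a diagram chase on a dense subspace. The only point requiring a moment of care is to construct the change-of-basepoint map $\Phi_\MM$ (and its counterpart $\Phi_\NN$) symmetrically, so that one can certify on generators that the two natural norm-at-most-$1$ maps are literally inverse to each other, thereby upgrading them from contractions to surjective isometries.
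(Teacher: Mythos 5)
Your proof is correct and follows essentially the same route as the paper: both rest on the canonical change-of-basepoint surjective isometry $\delta(z)\mapsto\delta(z)-\delta(0)$ on $\F_p(\NN)$ and $\F_p(\MM)$. The paper concludes by rewriting $\norm{L_j^{-1}}$ as a supremum of norm ratios and observing it is unchanged, whereas you package the same fact as an intertwining diagram $\Phi_\MM\circ L_j^0=L_j^{0'}\circ\Phi_\NN$; these are two phrasings of one argument.
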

\begin{proof}
Let $x_1,x_2\in\NN$ be two points and for the purpose of this proof given $i\in\{1,2\}$ we denote by $L_{j,x_i}:\F_{p,x_i}(\NN)\to \F_{p,x_i}(\MM)$ the canonical linearization of the inclusion $j:\NN\to\MM$, where Lipchitz-free spaces are constructed using the base point $x_i$. It is easy to check (and well-known) that mapping $\F_{p,x_1}(\NN)\ni\delta(z)\mapsto \delta(z)-\delta(x_1)\in\F_{p,x_2}(\NN)$ induces a surjective isometry between spaces $\F_{p,x_1}(\NN)$ and $\F_{p,x_2}(\NN)$ (and similarly for $\MM$), so we obtain
\[\begin{split}
\norm{L_{j,x_1}^{-1}} & = \sup\Big\{\frac{\norm{\sum_{x\in F} a_x \delta(x)}_{\F_{p,x_1}(\NN)}}{\norm{\sum_{x\in F} a_x \delta(x)}_{\F_{p,x_1}(\MM)}}\colon F\subset \NN\text{ finite}, a\in \R^F\Big\}\\
& = \sup\Big\{\frac{\norm{\sum_{x\in F} a_x (\delta(x)-\delta(x_1))}_{\F_{p,x_2}(\NN)}}{\norm{\sum_{x\in F} a_x (\delta(x)-\delta(x_1))}_{\F_{p,x_2}(\MM)}}\colon F\subset \NN\text{ finite}, a\in \R^F\Big\}\\
& = \sup\Big\{\frac{\norm{\sum_{x\in F} a_x \delta(x)}_{\F_{p,x_2}(\NN)}}{\norm{\sum_{x\in F} a_x \delta(x)}_{\F_{p,x_2}(\MM)}}\colon F\subset \NN\text{ finite}, a\in \R^F\Big\} = \norm{L_{j,x_2}^{-1}}.
\end{split}\]
Thus, the norm of $L_j^{-1}$ does not depend on the choice of the base point.
\end{proof}

\begin{fact}\label{fact:attainMin}
Let $0<p\leq q\leq 1$ and let $(\MM,d,0)$ be a finite pointed $q$-metric space. Let $0\in \NN\subset \MM$ be its subspace. Then there exists $a\in \R^{\NN\setminus\{0\}}$ and $T\in \TT(\MM)$ such that $T(a)=1=\|\sum_{x\in\NN\setminus\{0\}}a_x\delta(x)\|_{\F_p(\MM)}$ and
\begin{equation}\label{eq:normAttain}
\amen_p(\NN,\MM) = \normb{Big}{\sum_{x\in\NN\setminus\{0\}}a_x\delta(x)}_{\F_p(\NN)}.
\end{equation}
\end{fact}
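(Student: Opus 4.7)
The plan is to reduce the statement to a standard compactness argument and then invoke Theorem~\ref{thm:algorithm} to produce the tree $T$. First I would unwind the definition: $\amen_p(\NN,\MM) = \norm{L_j^{-1}}$, where $L_j:\F_p(\NN)\to\F_p(\MM)$ is the canonical linearization of the inclusion and is an isomorphism onto its image (by finiteness of $\NN$ it is automatically bounded below). Writing elements of $\F_p(\NN)$ as $\mu_a = \sum_{x\in\NN\setminus\{0\}} a_x\delta(x)$ for $a\in\R^{\NN\setminus\{0\}}$, we have
\[
\amen_p(\NN,\MM) = \sup\Big\{\frac{\norm{\mu_a}_{\F_p(\NN)}}{\norm{\mu_a}_{\F_p(\MM)}}\colon a\in\R^{\NN\setminus\{0\}},\ \mu_a\neq 0\Big\}.
\]

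The key observation is that $\NN$ is finite, so $\F_p(\NN)$ is finite-dimensional, and by homogeneity of the quotient we may restrict the supremum to the compact set $S := \{a\in\R^{\NN\setminus\{0\}}\colon \norm{\mu_a}_{\F_p(\MM)} = 1\}$; indeed, $S$ is closed (the map $a\mapsto \norm{\mu_a}_{\F_p(\MM)}$ is continuous on the finite-dimensional space $\R^{\NN\setminus\{0\}}$ since $L_j$ is bounded) and bounded (since $L_j$ is bounded below on its finite-dimensional domain and the $p$-norm on $\F_p(\NN)$ is equivalent to any other $p$-norm, in particular to the max of the coordinates). The continuous function $a\mapsto \norm{\mu_a}_{\F_p(\NN)}$ attains its maximum on $S$, giving some $a\in\R^{\NN\setminus\{0\}}$ with $\norm{\mu_a}_{\F_p(\MM)} = 1$ and $\norm{\mu_a}_{\F_p(\NN)} = \amen_p(\NN,\MM)$.

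To produce the tree, I would apply Theorem~\ref{thm:algorithm} to the finite $p$-metric space $\MM$ and the coefficients $a\in\R^{\MM\setminus\{0\}}$ extended by $a_x:=0$ for $x\in\MM\setminus\NN$ (this extension does not affect $\mu_a$ since zero coefficients contribute nothing). The theorem expresses $\norm{\mu_a}_{\F_p(\MM)}^p$ as a minimum over the finite set $\TT(\MM)$, so the minimum is attained at some $T\in\TT(\MM)$, yielding $T(a) = \norm{\mu_a}_{\F_p(\MM)} = 1$. This gives all the conclusions of the statement.

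I do not expect any real obstacle: once one notices that everything lives in the finite-dimensional space $\F_p(\NN)$ and that $\TT(\MM)$ is finite, both suprema/infima are actually maxima/minima. The only mildly subtle point is the passage between $\F_p(\NN)$ and $\F_p(\MM)$ via the identification induced by $L_j$, which I would handle by referring to Fact~\ref{fact:choiceOfZero}-style bookkeeping (or simply identifying $\mu_a$ with its image under $L_j$ throughout, as both are written as the same formal sum of Dirac functionals).
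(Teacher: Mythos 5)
Your proposal is correct and follows essentially the same route as the paper: express $\amen_p(\NN,\MM)=\norm{L_j^{-1}}$ as a supremum of $\norm{\mu_a}_{\F_p(\NN)}$ over coefficient vectors $a$ with $\norm{\mu_a}_{\F_p(\MM)}=1$, use finite-dimensionality to attain the supremum, and then obtain $T$ from Theorem~\ref{thm:algorithm} because the minimum there is over the finite set $\TT(\MM)$. The paper's proof is just a terser version of yours (it does not spell out the compactness of the constraint set), so there is nothing to add.
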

\begin{proof}
Let $L_j$  be the canonical linearization of the inclusion map $j:\NN\to \MM$. Then
\[
\norm{L_j^{-1}} = \sup\Big\{\normb{Big}{\sum_{x\in\NN\setminus\{0\}}a_x\delta(x)}_{\F_p(\NN)}\colon a\in \R^{\NN\setminus\{0\}}, \sum_{x\in\NN\setminus\{0\}} a_x\delta(x)\in S_{\F_p(\MM)}\Big\}
\]
and since $\F_p(\MM)$ is finite-dimensional, the supremum is attained, so there exists $a\in\R^{\NN\setminus\{0\}}$ with $\|\sum_{x\in\NN\setminus\{0\}}a_x\delta(x)\|_{\F_p(\MM)} = 1$ and $\norm{L_j^{-1}} = \|\sum_{x\in\NN\setminus\{0\}}a_x\delta(x)\|_{\F_p(\NN)}$. Finally, by Theorem~\ref{thm:algorithm} there exists $T\in\TT(\MM)$ satisfying $T(a) = 1$, which finishes the proof.
\end{proof}

In the following we mention a condition under which we may omit a vertex with zero coefficient from a tree considered in the algorithm from Theorem~\ref{thm:algorithm}.

\begin{lemma}\label{lem:removeNullVertex}Let $(\MM,d,0)$ be a finite pointed $p$-metric space for some $p\in(0,1)$,  $a\in\R^{\MM\setminus\{0\}}$ and $z\in\MM\setminus\{0\}$ with $a_z=0$. Let $T = (\MM,E,0)\in\TT(\MM)$ be such that $|\desc_T(z)|\geq 1$. Let us denote by $T(a,z)$ the summands in $T(a)^p$ which contain a distance from $z$, that is,
\[
T(z,a):=\sum_{y\in \desc(z)} \abs{c_T(y,a)}^pd^p(z,y) + \absb{Big}{\sum_{y\in \desc(z)} c_T(y,a)}^pd^p(\pred(z),z).
\]
Assume that
\[
            T(z,a)\geq \sum_{y\in \desc(z)} \abs{c_T(y,a)}^pd^p(\pred(z),y).
\]
Then $T(a)\geq \norm{\sum_{x\in \MM\setminus\{0,z\}}a_x\delta(x)}_{\F_p(\MM\setminus\{z\})}$.
\end{lemma}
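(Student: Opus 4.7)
The plan is to build an explicit rooted tree $T' \in \TT(\MM \setminus \{z\})$ whose value $T'(a')$ at the restricted coefficient vector $a' := a|_{\MM \setminus \{0,z\}}$ satisfies $T(a) \geq T'(a')$, and then invoke Theorem~\ref{thm:algorithm} to conclude. Since $|\desc_T(z)| \geq 1$ and $z \neq 0$, the vertex $z$ has a well-defined predecessor $\pred(z)$ in $T$, and we can form $T'$ by deleting the edge $e_z^T$ and replacing each edge $(z,y)$ with $y \in \desc_T(z)$ by the edge $(\pred(z),y)$. This clearly yields a rooted tree on the vertex set $\MM \setminus \{z\}$ with root $0$.

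The key observation is that since $a_z = 0$, the ``cumulative coefficient'' $c_{T'}(x,a')$ equals $c_T(x,a)$ for every $x \in \MM \setminus \{0,z\}$. Indeed, one checks from the construction that the subtree of $T'$ rooted at $x$ satisfies $V_x^{T'} = V_x^T \setminus \{z\}$ for every such $x$: for $x$ lying strictly below $z$ in $T$ the subtree is untouched, while for $x \in \MM \setminus V_z^T$ (in particular $x = \pred(z)$ or any ancestor of $z$) the subtree rooted at $x$ loses only the vertex $z$ itself, its former descendants now being attached to $\pred(z)$. Because $a_z = 0$, summing $a_y$ over either of these sets yields the same number.

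Next I would split $T(a)^p$ into the summands involving distances from $z$ and the rest. The summands involving $z$ consist of the edge $e_z^T = \{\pred(z),z\}$ together with the edges $\{z,y\}$ for $y \in \desc_T(z)$. Using $a_z = 0$ and the disjoint decomposition $V_z^T = \{z\} \cup \bigcup_{y \in \desc(z)} V_y^T$, we get $c_T(z,a) = \sum_{y \in \desc(z)} c_T(y,a)$, so the sum of these summands is exactly $T(z,a)$ as defined in the statement. Writing $R$ for the sum of $|c_T(x,a) d(e_x^T)|^p$ over the remaining $x \in \MM \setminus \{0,z\}$, we have
\[
T(a)^p = T(z,a) + R.
\]
On the other hand, by the identification of subtrees above and the fact that $d(e_y^{T'}) = d(\pred(z),y)$ for $y \in \desc_T(z)$ while $d(e_x^{T'}) = d(e_x^T)$ otherwise, we obtain
\[
T'(a')^p = \sum_{y \in \desc_T(z)} |c_T(y,a)|^p d^p(\pred(z),y) + R.
\]

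The assumption of the lemma is precisely that the first summand of $T'(a')^p$ is bounded above by $T(z,a)$, so $T(a)^p \geq T'(a')^p$. Applying Theorem~\ref{thm:algorithm} to $\sum_{x \in \MM \setminus \{0,z\}} a_x \delta(x)$ on the $p$-metric space $\MM \setminus \{z\}$ (note that $\MM \setminus \{z\}$ is still a $p$-metric space as a subspace of $\MM$ and contains the basepoint $0$), its $\F_p(\MM \setminus \{z\})$-norm to the $p$-th power is the minimum of $\widetilde{T}(a')^p$ over $\widetilde{T} \in \TT(\MM \setminus \{z\})$, and in particular is at most $T'(a')^p \leq T(a)^p$. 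Taking $p$-th roots gives the claim. I do not foresee a serious obstacle here; the only mildly delicate point is verifying the identity $V_x^{T'} = V_x^T \setminus \{z\}$ uniformly in $x$, which is a clean case analysis based on whether $x$ lies above or below $z$ in $T$.
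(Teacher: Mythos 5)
Your proposal is correct and follows essentially the same route as the paper: the paper's proof constructs exactly the same tree $T'\in\TT(\MM\setminus\{z\})$ by reattaching the descendants of $z$ to $\pred(z)$ and then concludes via $T(a)^p\geq T'(a)^p\geq \norm{\sum_{x\in\MM\setminus\{0,z\}}a_x\delta(x)}^p_{\F_p(\MM\setminus\{z\})}$. You merely spell out the verifications (the identity $V_x^{T'}=V_x^T\setminus\{z\}$ and the decomposition $T(a)^p=T(z,a)+R$) that the paper leaves implicit.
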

\begin{proof}
We replace in our tree $T$ edges $\{(\pred(z),z),(z,y)\colon y\in \desc(z)\}$ by edges $\{(\pred(z)),y)\colon y\in\desc(z)\}$. Formally speaking, we
consider the three $T' = (\MM\setminus\{z\},E',0)\in \TT(\MM\setminus\{z\})$ such that 
\[
E' = \Big(E\setminus \{(\pred(z),z),(z,y)\colon y\in\desc(z)\}\Big)\cup \{(\pred(z),y)\colon y\in \desc(z)\}.
\]
Then for the tree $T'$ we obtain
\[
T(a)^p \geq T'(a)^p \geq \normb{Big}{\sum_{x\in \MM\setminus\{0,z\}}a_x\delta(x)}^p_{\F_p(\MM\setminus\{z\})}.\qedhere
\]
\end{proof}

Our first application of Lemma~\ref{lem:removeNullVertex} is the following.

\begin{proposition}\label{prop:omitPoints}
Let $(\MM,d,0)$ be a finite pointed $p$-metric space for some $p\in(0,1)$ and $z\in \MM\setminus\{0\}$. Then for any $a\in \R^{\MM\setminus\{0\}}$ with $a_z=0$ we have that $\norm{\sum_{x\in \MM\setminus\{0\}} a_x\delta(x)}_{\F_p(\MM)}$ is equal to the minimum of $\norm{\sum_{x\in \MM\setminus\{0,z\}} a_x\delta(x)}_{\F_p(\MM\setminus\{z\})}$ and all the numbers $T(a)$, where $T\in\TT(\MM)$ is such that $|\desc_T(z)|\geq 2$.
\end{proposition}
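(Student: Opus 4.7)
The plan is to apply Theorem~\ref{thm:algorithm} to both sides and handle a short case analysis. Throughout, write $\mu := \sum_{x\in\MM\setminus\{0\}} a_x\delta(x)$. The inequality $\norm{\mu}_{\F_p(\MM)}\leq \min\{\cdots\}$ is immediate: Theorem~\ref{thm:algorithm} gives $T(a)\geq \norm{\mu}_{\F_p(\MM)}$ for every $T\in\TT(\MM)$, and the canonical linearization of the $1$-Lipschitz inclusion $\MM\setminus\{z\}\hookrightarrow\MM$ has operator norm at most $1$, so $\norm{\mu}_{\F_p(\MM)}\leq \norm{\mu}_{\F_p(\MM\setminus\{z\})}$.

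For the reverse inequality, Theorem~\ref{thm:algorithm} supplies $T_0\in\TT(\MM)$ with $T_0(a) = \norm{\mu}_{\F_p(\MM)}$; I would split on $k := |\desc_{T_0}(z)|$. If $k\geq 2$, then $T_0$ is one of the trees quantified on the right and we are done. If $k=0$, the vertex $z$ is a leaf of $T_0$; the summand for $z$ in $T_0(a)^p$ equals $|a_z d(e_z^{T_0})|^p = 0$, and deleting $z$ yields $T'\in\TT(\MM\setminus\{z\})$ whose remaining edges and subtree sums (using $a_z=0$) are unchanged, so $T'_d(a) = T_0(a)$. Theorem~\ref{thm:algorithm} applied in $\MM\setminus\{z\}$ then gives $T_0(a)\geq \norm{\mu}_{\F_p(\MM\setminus\{z\})}$.

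The only case that actually uses Lemma~\ref{lem:removeNullVertex} is $k=1$, say $\desc_{T_0}(z) = \{y\}$. Since $a_z = 0$, one has $c_{T_0}(z,a) = c_{T_0}(y,a)$, so the quantity $T_0(z,a)$ from Lemma~\ref{lem:removeNullVertex} simplifies to
\[
T_0(z,a) = |c_{T_0}(y,a)|^p\bigl(d(z,y)^p + d(\pred(z),z)^p\bigr) \geq |c_{T_0}(y,a)|^p d(\pred(z),y)^p
\]
by the $p$-triangle inequality for $d^p$. This is exactly the hypothesis of Lemma~\ref{lem:removeNullVertex}, which delivers $T_0(a)\geq \norm{\mu}_{\F_p(\MM\setminus\{z\})}$, finishing the proof. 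I expect no genuine obstacle here: the principal observation is merely recognising that the one-descendant configuration at a zero-coefficient vertex is exactly what Lemma~\ref{lem:removeNullVertex} was engineered to handle, and that the leaf and $\geq 2$-descendant cases are either trivial or handled by a direct deletion of the vertex $z$.
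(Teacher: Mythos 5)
Your argument is correct and follows essentially the same route as the paper: the easy inequality via Theorem~\ref{thm:algorithm} and the inclusion, then a case split on $|\desc_T(z)|$, deleting $z$ directly when it is a leaf and invoking Lemma~\ref{lem:removeNullVertex} together with the $p$-triangle inequality in the one-descendant case. No gaps.
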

\begin{proof}
Put $\NN = \MM\setminus\{z\}$. Pick $T = (\MM,E,0)\in \TT(\MM)$. If $z\in\leaf(T)$, then we have $V^T_z = \{z\}$, so $\sum_{y\in V^T_z} a(y) = a(z) = 0$ and therefore if we omit the point $z$ from the tree, the value $T(a)$ will be the same. Formally speaking, for the tree $T' = (\NN,E',0)\in \TT(\NN)$ with $E' = E\setminus \{e^T_z\}$ we obtain $T(a) = T'(a)\geq \norm{\sum_{x\in \NN\setminus\{0\}} a_x\delta(x)}_{\F_p(\NN)}$.

Now consider the case when $|\desc(z)|=1$. We shall prove that in this case we may apply Lemma~\ref{lem:removeNullVertex}. Indeed, in this case there is a unique $y\in \desc(z)$ and we have
\[
\abs{c_T(y,a)}^pd^p(y,\pred(z)) \leq \abs{c_T(y,a)}^p (d^p(y,z) + d^p(z,\pred(z))) = T(z,a),
\]
so applying Lemma~\ref{lem:removeNullVertex} we obtain $
T(a)\geq \normb{Big}{\sum_{x\in \NN\setminus\{0\}} a_x\delta(x)}_{\F_p(\NN)}$.
\end{proof}

The following implies that if $\NN$ is a $p$-metric space and we want to minimalize the norm $\|\sum_{x\in \NN}a_x \delta(x)\|_{\F_p(\MM)}$ over all the possible $p$-metric spaces $\MM$ with $\NN\subset \MM$, it suffices to consider the case when $\MM\setminus\NN$ has at most $|\NN| - 2$ points. Moreover, if $|\MM\setminus \NN| = |\NN| - 2$, we obtain a restriction on the trees we need to consider.

\begin{proposition}\label{prop:addTooManyPoints}
    Let $(\MM,d,0)$ be a pointed $p$-metric space for some $p\in(0,1)$, let $0\in \NN\subset \MM$ be a finite set, $|\NN|\geq 3$ and $a\in \R^{\MM\setminus\{0\}}$ be such that $a|_{\MM\setminus \NN}\equiv 0$.
    \begin{enumerate}[(a)]
        \item\label{it:addTooManyPoints} If $|\MM\setminus \NN| > |\NN| - 2$, then 
        \[\begin{split}
        \normb{Big}{\sum_{x\in \NN\setminus\{0\}} a_x\delta(x)}_{\F_p(\MM)} = \min\Big\{\normb{Big}{\sum_{x\in \NN\setminus\{0\}} a_x\delta(x)}_{\F_p(F)}\colon & \NN\subset F\subset\MM \text{ and } \\& |F\setminus\NN| = |\NN|-2\Big\}.
        \end{split}\]
        In particular, $\absamen_p^q(n) = \absamen_p^q(n,2n-1)$ for every $n\in\N$, $n\geq 2$ and $q\in [p,1]$.
        \item If $|\MM\setminus \NN| = |\NN| - 2$, then 
        \[\begin{split}\normb{Big}{\sum_{x\in \NN\setminus\{0\}} a_x\delta(x)}_{\F_p(\MM)} = \min\Big\{& \min \Big\{\normb{Big}{\sum_{x\in \NN\setminus\{0\}} a_x\delta(x)}_{\F_p(\MM\setminus\{y\})}\colon y\in \MM\setminus \NN\Big\},\\
        & \min\{T(a)\colon T\in\mathfrak{G}\}\Big\},\end{split}\]
        where $\mathfrak{G}\subset \TT(\MM)$ consists of those $T\in \TT(\MM)$ for which $\leaf(T) = \NN\setminus \{0\}$, $|\desc(0)|=1$ and for every $z\in \MM\setminus \NN$ we have $|\desc(z)| = 2$.
    \end{enumerate}
\end{proposition}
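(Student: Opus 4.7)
My plan is to deduce both parts from Theorem~\ref{thm:algorithm} combined with a simple counting argument on descendants in the rooted trees appearing there, and with Proposition~\ref{prop:omitPoints} (or rather the argument used in its proof via Lemma~\ref{lem:removeNullVertex}). The combinatorial observation that drives everything is the following: in any $T=(\MM,E,0)\in\TT(\MM)$ with $\MM$ finite, since $T$ has $|\MM|-1$ edges,
\[
\sum_{v\in \MM}|\desc_T(v)|=(|\NN|-1)+|\MM\setminus\NN|.
\]
If every $z\in\MM\setminus\NN$ satisfies $|\desc_T(z)|\ge 2$, the left-hand side is at least $2|\MM\setminus\NN|+|\desc_T(0)|$, and the assumption $|\NN|\ge 3$ forces $|\desc_T(0)|\ge 1$. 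Thus $|\MM\setminus\NN|\le|\NN|-2$, with equality only when $|\desc_T(z)|=2$ for every $z\in\MM\setminus\NN$, $|\desc_T(0)|=1$, and every $v\in\NN\setminus\{0\}$ is a leaf of $T$, that is, precisely when $T\in\mathfrak{G}$.

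For part (a) I would first settle the case of finite $\MM$ and then reduce the general case to it via Fact~\ref{fact:wlogFinite}. Assume $\MM$ is finite with $|\MM\setminus\NN|>|\NN|-2$ and put $\mu=\sum_{x\in\NN\setminus\{0\}}a_x\delta(x)$. It suffices to produce some $z\in\MM\setminus\NN$ with $\|\mu\|_{\F_p(\MM)}=\|\mu\|_{\F_p(\MM\setminus\{z\})}$ and then iterate until the auxiliary set shrinks to size $|\NN|-2$. To produce such $z$, pick an optimal $T\in\TT(\MM)$ provided by Theorem~\ref{thm:algorithm}; the contrapositive of the combinatorial observation guarantees some $z\in\MM\setminus\NN$ with $|\desc_T(z)|\le 1$. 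If $z\in\leaf(T)$ we simply drop the edge $e_z^T$; if $|\desc_T(z)|=1$ we apply Lemma~\ref{lem:removeNullVertex}, just as in the proof of Proposition~\ref{prop:omitPoints}. In either case one obtains $T'\in\TT(\MM\setminus\{z\})$ with $T(a)\ge T'(a)\ge \|\mu\|_{\F_p(\MM\setminus\{z\})}$, and the reverse inequality is just nonexpansiveness of the canonical linearization of the inclusion. The infinite case of (a) then follows, because every finite $\NN'$ with $\NN\subset\NN'\subset\MM$ can be enlarged inside $\MM$ to some $\NN''$ with $|\NN''\setminus\NN|=|\NN|-2$ without increasing $\|\mu\|_{\F_p(\NN')}$. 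For the ``in particular'' part, with $n=|\NN\setminus\{0\}|$, (a) allows us to replace any $\MM'\supset\NN$ appearing in the definition of $\absamen_p^q(\NN)$ by a finite $F$ satisfying $|F\setminus\NN|\le n-1$, hence $|F\setminus\{0\}|\le 2n-1$.

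For part (b) we have $|\MM\setminus\NN|=|\NN|-2$, so the inequalities in the combinatorial observation are tight. Writing $\mu=\sum_{x\in\NN\setminus\{0\}}a_x\delta(x)$ again, for any $T\in\TT(\MM)$ exactly one of the following holds: either some $z\in\MM\setminus\NN$ has $|\desc_T(z)|\le 1$, in which case the removal argument from part (a) yields $T(a)\ge\|\mu\|_{\F_p(\MM\setminus\{z\})}$, or else all $z\in\MM\setminus\NN$ satisfy $|\desc_T(z)|\ge 2$, in which case the equality statement of the combinatorial observation forces $T\in\mathfrak{G}$. Theorem~\ref{thm:algorithm} therefore gives
\[
\|\mu\|_{\F_p(\MM)}\ge \min\Bigl\{\min_{y\in\MM\setminus\NN}\|\mu\|_{\F_p(\MM\setminus\{y\})},\ \min_{T\in\mathfrak{G}}T(a)\Bigr\},
\]
and the opposite inequality is immediate from monotonicity of the free $p$-norm in the underlying space together with $\mathfrak{G}\subset\TT(\MM)$.

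I do not anticipate a real obstacle: the ingredients (the tree algorithm, the removal of a null vertex of small descendant count, and the edge-counting identity) are all in hand. The only point requiring some care is the passage from infinite $\MM$ to finite in part (a) and the bookkeeping in the ``in particular'' clause, but both amount to routine applications of Fact~\ref{fact:wlogFinite} and Fact~\ref{fact:attainMin}.
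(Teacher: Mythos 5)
Your argument is correct and follows essentially the same route as the paper: reduce to finite $\MM$ via Fact~\ref{fact:wlogFinite}, use Proposition~\ref{prop:omitPoints} (equivalently, the leaf-dropping/Lemma~\ref{lem:removeNullVertex} step you re-derive) to discard auxiliary vertices with at most one descendant, and then the descendant-counting bound $\sum_{v}|\desc_T(v)|=|E^T|$ to show that trees with all auxiliary vertices having two or more descendants force $|\MM\setminus\NN|\le|\NN|-2$, with the equality analysis identifying $\mathfrak{G}$. Your edge-count identity is just a rephrasing of the paper's disjointness argument for the sets $\desc(z)$, so there is nothing substantively different to compare.
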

\begin{proof}By Fact~\ref{fact:wlogFinite}, we may without loss of generality assume that $\MM$ is finite. Let us denote by $\mathfrak{G}'\subset \TT(\MM)$ those trees $T\in \TT(\MM)$ for which we have $|\desc(z)| \geq 2$ for every $z\in \MM\setminus \NN$. By Proposition~\ref{prop:omitPoints}, we have that $\norm{\sum_{x\in \NN\setminus\{0\}} a_x\delta(x)}_{\F_p(\MM)}$ is equal to the minimum of all $\norm{\sum_{x\in \NN\setminus\{0\}} a_x\delta(x)}_{\F_p(\MM\setminus\{y\})}$, $y\in \MM\setminus\NN$ and all the numbers $T(a)$, $T\in \mathfrak{G}'$.

Note that for every $T\in \mathfrak{G}'$ we have that $\desc(z)$, $z\in \MM\setminus \NN$ are pairwise disjoint sets, each of which is disjoint from $\desc(0)$. Thus,
\begin{equation}\label{eq:cardinalitArgument1}
|\MM\setminus \NN| + |\NN\setminus\{0\}| = |\MM\setminus \{0\}| \geq \sum_{z\in \MM\setminus \NN} |\desc(z)| + |\desc(0)|\geq 2|\MM\setminus \NN| + 1.
\end{equation}
So we obtain that $|\MM\setminus \NN|\leq |\NN\setminus\{0\}| - 1 = |\NN| - 2$.

Thus, if $|\MM\setminus \NN| > |\NN| - 2$ we have $\mathfrak{G}' = \emptyset$ which proves that there exist $y\in \MM\setminus\NN$ such that for $F':=\MM\setminus\{y\}$ we have 
  $\norm{\sum_{x\in \NN\setminus\{0\}} a_x\delta(x)}_{\F_p(\MM)}=\norm{\sum_{x\in \NN\setminus\{0\}} a_x\delta(x)}_{\F_p(F')}$. If $|F'\setminus\NN| = |\NN| - 2$, then \ref{it:addTooManyPoints} is proved; otherwise, we inductively apply the already proven to omit more and more points until we find a finite set $\NN\subset F\subset \MM$ with $|F\setminus \NN| = |\NN|-2$ such that $\norm{\sum_{x\in \NN\setminus\{0\}} a_x\delta(x)}_{\F_p(\MM)}=\norm{\sum_{x\in \NN\setminus\{0\}} a_x\delta(x)}_{\F_p(F)}$. This proves \ref{it:addTooManyPoints}.

If $|\MM\setminus \NN| = |\NN| - 2$, then there are equalities in \eqref{eq:cardinalitArgument1} and so we have $|\desc(0)| = 1$, $|\desc(z)|=2$ for $z\in \MM\setminus \NN$ and $\MM\setminus (\{0\}\cup \desc(0)) = \bigcup_{z\in \MM\setminus \NN} \desc(z)$. Since $|\desc(z)|=2$ for $z\in \MM\setminus \NN$, we have $\leaf(T)\subset \NN\setminus\{0\}$. Finally, let us show that $\NN\setminus\{0\}\subset\leaf(T)$. Indeed, if there was $x\in \NN\setminus\{0\}$ and $y \in \desc(x)$ then $y\in \desc(z)$ for some $z\in \MM\setminus\NN$, so $x = \pred(y) = z$ which is a contradiction.
\end{proof}

\subsection{Isometric \texorpdfstring{$p$-}-amenability for spaces with three points}\label{subsec:isometricAmen}
The main result of this Subsection is Theorem~\ref{thm:isomAmen}. The technical part of the proof is contained in the following Lemma.

\begin{lemma}\label{lem:technical}Let $(\MM,d,0)$ be a finite pointed $p$-metric space for some $p\in(0,1)$ with $\MM = \{0,x,y,z\}$. Pick  $a\in\R^{\MM\setminus\{0\}}$ with $a_z=0$. Consider the tree $T = (\MM,E,0)\in\TT(\MM)$ with $E = \{(0,z),(z,x),(z,y)\}$ and put $d_{x0}:=d(x,0)$, $d_{y0}:=d(y,0)$, $d_{xy}:=d(x,y)$. Put
\[
C:=|a_x+a_y|^p \frac{d_{x0}^p+d_{y0}^p-d_{xy}^p}{2} + |a_x|^p \frac{d_{xy}^p + d_{x0}^p - d_{y0}^p}{2} + |a_y|^p \frac{d_{xy}^p + d_{y0}^p - d_{x0}^p}{2}.
\]
Then we have $T(a)^p\geq C$
and
\begin{equation}\label{eq:technical}
C\leq \norm{a_x\delta(x) + a_y\delta(y)}_{\F_p(\{0,x,y\})}^p.
\end{equation}
Moreover, in \eqref{eq:technical} equality holds if and only if one of the following equalities hold
\[d_{x0}^p + d_{y0}^p = d_{xy}^p, \quad d_{x0}^p + d_{xy}^p = d_{y0}^p,\quad  d_{y0}^p + d_{xy}^p = d_{x0}^p,\quad a_xa_y=0,\quad a_x=-a_y.\]
\end{lemma}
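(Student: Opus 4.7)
The plan is to prove the two inequalities separately by combining the $p$-triangle inequality in $(\MM, d^p)$ with the $p$-subadditivity of $|\cdot|^p$ on $\R$, and then to read off the equality characterization from the algebraic structure of the argument.

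For the lower bound $T(a)^p \geq C$, I would expand $T(a)^p$ directly from the tree $T$: since $a_z = 0$, $V_z^T = \{x,y,z\}$, $V_x^T = \{x\}$, and $V_y^T = \{y\}$, we get
\[
T(a)^p = |a_x + a_y|^p d(0,z)^p + |a_x|^p d(z,x)^p + |a_y|^p d(z,y)^p.
\]
Setting $s_1 = |a_x+a_y|^p$, $s_2 = |a_x|^p$, $s_3 = |a_y|^p$ and $\lambda = (s_1 + s_2 - s_3)/2$, $\mu = (s_1 - s_2 + s_3)/2$, $\nu = (-s_1 + s_2 + s_3)/2$, this expression equals $\lambda(d(z,x)^p + d(0,z)^p) + \mu(d(z,y)^p + d(0,z)^p) + \nu(d(z,x)^p + d(z,y)^p)$. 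The key observation is that $\lambda, \mu, \nu \geq 0$: this is precisely the $p$-subadditivity of $|\cdot|^p$ applied to the identities $a_y = (a_x+a_y) - a_x$, $a_x = (a_x+a_y) - a_y$, and $a_x + a_y = a_x + a_y$. Applying the three $p$-triangle inequalities $d(z,x)^p + d(0,z)^p \geq d_{x0}^p$, $d(z,y)^p + d(0,z)^p \geq d_{y0}^p$, and $d(z,x)^p + d(z,y)^p \geq d_{xy}^p$ then gives $T(a)^p \geq \lambda d_{x0}^p + \mu d_{y0}^p + \nu d_{xy}^p = C$.

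For the bound $C \leq \|a_x\delta(x) + a_y\delta(y)\|_{\F_p(\{0,x,y\})}^p$, I plan to invoke Corollary~\ref{cor:twoPointsFormula}, which writes the norm as $\min\{A_1, A_2, A_3\}$ with $A_1 = |a_x|^p d_{x0}^p + |a_y|^p d_{y0}^p$, $A_2 = |a_x+a_y|^p d_{x0}^p + |a_y|^p d_{xy}^p$, and $A_3 = |a_x+a_y|^p d_{y0}^p + |a_x|^p d_{xy}^p$. A direct algebraic manipulation yields the factorizations
\begin{align*}
A_1 - C &= \tfrac{1}{2}(d_{x0}^p + d_{y0}^p - d_{xy}^p)(|a_x|^p + |a_y|^p - |a_x+a_y|^p),\\
A_2 - C &= \tfrac{1}{2}(d_{xy}^p + d_{x0}^p - d_{y0}^p)(|a_x+a_y|^p + |a_y|^p - |a_x|^p),\\
A_3 - C &= \tfrac{1}{2}(d_{xy}^p + d_{y0}^p - d_{x0}^p)(|a_x+a_y|^p + |a_x|^p - |a_y|^p).
\end{align*}
In each product the first factor is nonnegative by the $p$-triangle inequality and the second by the $p$-subadditivity of $|\cdot|^p$, so $C \leq \min_i A_i$.

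Finally, the equality characterization is obtained by determining when each $A_i - C$ vanishes. A distance factor is zero precisely when one of the three listed $p$-triangle equalities holds, while a coefficient factor of the form $|u|^p + |v|^p - |u+v|^p$ vanishes iff $uv = 0$ (for $p < 1$ the $p$-subadditivity is strict whenever both summands are nonzero); unravelling this for the three factors produces the conditions $a_x a_y = 0$ or $a_x = -a_y$. Since $C \leq \min A_i$, equality in \eqref{eq:technical} occurs iff $A_i = C$ for some $i$, which is equivalent to the disjunction of the five listed conditions; conversely, if none holds, all three differences are strictly positive and $C < \min A_i$. The main conceptual hurdle is spotting the symmetric ansatz in $|a_x|^p, |a_y|^p, |a_x+a_y|^p$ that makes both the lower bound and the three factorizations come out cleanly; once this is in place, everything reduces to routine algebra.
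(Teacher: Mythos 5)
Your proposal is correct and follows essentially the same route as the paper: the same symmetric rewriting of $T(a)^p$ with the nonnegative coefficients $\tfrac{1}{2}(|a_x+a_y|^p\pm|a_x|^p\mp|a_y|^p)$ followed by the three $p$-triangle inequalities, the same three factorizations of $A_i-C$ compared against Corollary~\ref{cor:twoPointsFormula}, and the same equality analysis via strict $p$-subadditivity. No gaps.
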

\begin{proof}The first inequality follows from the following computation
\[\begin{split}
& \mkern-36mu \sum_{w\in\MM\setminus\{0\}} \absb{Big}{\sum_{v\in V_{w}}a_v}^pd^p(e_w) = |a_x|^pd^p(e_x) + |a_y|^pd^p(e_y) + |a_x+a_y|^pd^p(e_{z})\\
& =\tfrac{|a_x+a_y|^p + |a_x|^p - |a_y|^p}{2}(d^p(e_{x}) + d^p(e_z)) + \tfrac{|a_x+a_y|^p + |a_y|^p - |a_x|^p}{2}(d^p(e_y) + d^p(e_z))\\ 
& \quad+ \tfrac{|a_x|^p + |a_y|^p - |a_x+a_y|^p}{2}(d^p(e_x) + d^p(e_y))\\
& \geq \tfrac{|a_x+a_y|^p + |a_x|^p - |a_y|^p}{2}d^p(x,0) + \tfrac{|a_x+a_y|^p + |a_y|^p - |a_x|^p}{2}d^p(y,0)\\ 
& \quad+ \tfrac{|a_x|^p + |a_y|^p - |a_x+a_y|^p}{2}d^p(x,y) = C.
\end{split}\]
Now, we easily check that
\[\begin{split}
C - |a_x|^pd_{x0}^p - |a_y|^pd_{y0}^p & = \Big(d_{x0}^p + d_{y0}^p - d_{xy}^p\Big)\Big(\frac{|a_x+a_y|^p - |a_x|^p - |a_y|^p}{2}\Big)\leq 0,\\
C - |a_x+a_y|^pd_{x0}^p - |a_y|^pd_{xy}^p & = \Big(d_{x0}^p + d_{xy}^p - d_{y0}^p\Big)\Big(\frac{|a_x|^p - |a_y|^p - |a_x+a_y|^p}{2}\Big)\leq 0,\\
C - |a_x+a_y|^pd_{y0}^p - |a_x|^pd_{xy}^p & = \Big(d_{y0}^p + d_{xy}^p - d_{x0}^p\Big)\Big(\frac{|a_y|^p - |a_x+a_y|^p - |a_x|^p}{2}\Big)\leq 0,
\end{split}\]
which easily implies that
\[\begin{split}
C & \leq \min\{|a_x|^pd_{x0}^p + |a_y|^pd_{y0}^p, |a_x+a_y|^pd_{x0}^p + |a_y|^pd_{xy}^p, |a_x+a_y|^pd_{y0}^p + |a_x|^pd_{xy}^p\}\\
&  \stackrel{\text{Corollary \ref{cor:twoPointsFormula}}}{=} \norm{a\delta(x)+b\delta(y)}_{\F_p(\NN)}^p,
\end{split}\]
and the first inequality above is actually equality if and only if one of the following equalities hold: $d_{x0}^p + d_{y0}^p = d_{xy}^p$, $d_{x0}^p + d_{xy}^p = d_{y0}^p$, $d_{y0}^p + d_{xy}^p = d_{x0}^p$, $|a_x+a_y|^p = |a_x|^p + |a_y|^p$, $|a_x+a_y|^p + |a_y|^p = |a_x|^p$, $|a_y|^p = |a_x+a_y|^p + |a_x|^p$. Finally, we observe that for $p<1$ one of the last three equalities hold if and only if $a_xa_y=0$ or $a_x=-a_y$.
\end{proof}

The following construction shall provide us with an optimal extension of a $p$-metric space containing $3$ points, which minimizes the Lipschitz-free space $p$-norm.

\begin{proposition}\label{prop:optimalAddPoint}Let $(\MM,d,0)$ be a finite pointed $p$-metric space for some $p\in(0,1)$ with $\MM=\{0,x,y,z\}$ and let $\NN = \{0,x,y\}$. Assume that no one of the $p$-inequalities among distances between points $\{0,x,y\}$ is equality and consider the $p$-metric $d'$ on $\MM$ such that $d' = d$ on $\NN\times \NN$ and
\[
d'(x(1),z) = \Big(\frac{d^p(x(2),x(1)) + d^p(x(3),x(1)) - d^p(x(2),x(3))}{2}\Big)^{1/p}
\]
for any bijection $x:\{1,2,3\}\to \NN$.

Then for $\MM' = (\MM,d')$ and any $a,b\in\R$ we have
\begin{equation}\label{eq:optimalAddingPoint}
    \norm{a\delta(x) + b\delta(y)}_{\F_p(\MM')}\leq \norm{a\delta(x) + b\delta(y)}_{\F_p(\MM)}.
\end{equation}
Moreover if we denote $d_x = d(x,0)$, $d_y = d(y,0)$, $d_{xy}=d(x,y)$, then
\begin{equation}\label{eq:valueOfFreeNorm}
    \norm{a\delta(x) + b\delta(y)}_{\F_p(\MM')}^p = d_x^p \tfrac{|a+b|^p + |a|^p - |b|^p}{2} + d_y^p \tfrac{|a+b|^p + |b|^p - |a|^p}{2} + d_{xy}^p\tfrac{|a|^p + |b|^p - |a+b|^p}{2}
\end{equation}
and $\norm{a\delta(x) + b\delta(y)}_{\F_p(\MM')}^p = \norm{a\delta(x) + b\delta(y)}_{\F_p(\NN)}^p$ if and only if $ab=0$ or $a=-b$.
\end{proposition}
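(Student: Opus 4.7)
The key observation driving the proof is that $d'$ is designed precisely so that every $p$-triangle inequality in $\MM'$ involving $z$ and two points of $\NN$ becomes an \emph{equality}: direct computation gives
\begin{equation*}
d'(z,u)^p + d'(z,v)^p = d(u,v)^p\qquad \text{for each pair of distinct }u,v\in \NN.
\end{equation*}
From this identity I would first verify that $d'$ really is a $p$-metric. Positivity of $d'(0,z)^p$, $d'(x,z)^p$ and $d'(y,z)^p$ follows from the hypothesis that no $p$-triangle inequality on $\NN$ is an equality; three of the six $p$-triangle inequalities involving $z$ hold as equalities by the identity above, and the remaining three reduce algebraically to the $p$-triangle inequalities inside $\NN$.

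Next I would evaluate the star tree $T_\star = (\MM',\{(0,z),(z,x),(z,y)\},0)\in\TT(\MM')$ on $a\delta(x)+b\delta(y)$. Since $V_z^{T_\star}=\{z,x,y\}$, $V_x^{T_\star}=\{x\}$ and $V_y^{T_\star}=\{y\}$, Theorem~\ref{thm:algorithm} gives
\begin{equation*}
T_\star^p = |a+b|^p\, d'(0,z)^p + |a|^p\, d'(x,z)^p + |b|^p\, d'(y,z)^p,
\end{equation*}
and substituting the explicit expressions for these three distances and regrouping by the coefficients $d_x^p$, $d_y^p$, $d_{xy}^p$ yields precisely the constant $C$ of Lemma~\ref{lem:technical}, which is the right-hand side of \eqref{eq:valueOfFreeNorm}.

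With that in hand I would apply Proposition~\ref{prop:omitPoints} to $\MM'$ at the point $z$ (whose coefficient is zero). The only tree in $\TT(\MM')$ with $|\desc(z)|\geq 2$ is $T_\star$ itself, because $|\desc(z)|=2$ forces $\desc(z)=\{x,y\}$ and hence $\pred(z)=0$. Thus
\begin{equation*}
\norm{a\delta(x)+b\delta(y)}_{\F_p(\MM')}^p = \min\bigl\{\norm{a\delta(x)+b\delta(y)}_{\F_p(\NN)}^p,\ C\bigr\},
\end{equation*}
and by Lemma~\ref{lem:technical} this minimum equals $C$, proving \eqref{eq:valueOfFreeNorm}. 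For \eqref{eq:optimalAddingPoint} I would run exactly the same reduction in the original space $\MM$: Proposition~\ref{prop:omitPoints} expresses $\norm{a\delta(x)+b\delta(y)}_{\F_p(\MM)}^p$ as the minimum of $\norm{a\delta(x)+b\delta(y)}_{\F_p(\NN)}^p$ and the value of the star tree in $\MM$, and Lemma~\ref{lem:technical} bounds both of these from below by $C$, giving $\norm{a\delta(x)+b\delta(y)}_{\F_p(\MM)}^p \geq C = \norm{a\delta(x)+b\delta(y)}_{\F_p(\MM')}^p$.

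The final equality statement is read off from the ``moreover'' clause of Lemma~\ref{lem:technical}: the equality $C = \norm{a\delta(x)+b\delta(y)}_{\F_p(\NN)}^p$ is equivalent to one of five conditions, the first three of which are excluded by the standing hypothesis that no $p$-triangle on $\NN$ is an equality, leaving $ab=0$ or $a=-b$. The only genuinely delicate step is the first one, checking that the three non-equality $p$-triangle inequalities involving $z$ do hold in $\MM'$; this is routine algebra, but it is exactly where the strict $p$-triangle hypothesis on $\NN$ is essential, since equality in any one of them would allow a $d'(z,\cdot)$ value to vanish.
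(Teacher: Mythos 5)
Your proposal is correct and follows essentially the same route as the paper: verify that $d'$ is a $p$-metric using the strict $p$-triangle hypothesis, identify the star tree as the only tree with $|\desc(z)|\geq 2$, evaluate it to get the constant $C$ of Lemma~\ref{lem:technical}, and invoke that lemma's two inequalities plus its equality characterization. The only cosmetic difference is that you reduce to the star tree via Proposition~\ref{prop:omitPoints} while the paper cites Proposition~\ref{prop:addTooManyPoints}; in this four-point setting these give the same reduction.
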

\begin{proof}Using the assumption that $p$-inequalities between points $\{0,x,y\}$ are never equalities, it is easy to check that $(\MM,d')$ is a $p$-metric space. Pick $\alpha\in \R^{\{x,y\}}$ and in order to shorten the notation write $a:=\alpha_x$ and $b:=\alpha_y$. Put
\[C(a,b):=d_x^p \tfrac{|a+b|^p + |a|^p - |b|^p}{2} + d_y^p \tfrac{|a+b|^p + |b|^p - |a|^p}{2} + d_{xy}^p\tfrac{|a|^p + |b|^p - |a+b|^p}{2}.\]
There is only one tree $T = (\MM,E,0)$ such that $|\desc(z)|=2$ and this is the tree with edges $(0,z)$, $(z,x)$, $(z,y)$ and for this $T$ we obtain 
\[
T_d(\alpha)^p \stackrel{\text{Lemma~\ref{lem:technical}}}{\geq} |a+b|^p\tfrac{d_{x}^p + d_y^p - d_{xy}^p}{2} + |a|^p\tfrac{d_x^p + d_{xy}^p - d_{y}^p}{2} + |b|^p\tfrac{d_y^p + d_{xy}^p - d_x^p}{2} = C(a,b),
\]
so by Proposition~\ref{prop:addTooManyPoints} we have that 
\begin{equation}\label{eq:optimalAddinPointReformulated}
\|a\delta(x)+b\delta(y)\|_{\F_p(\MM)}^p\geq \min\{\|a\delta(x)+b\delta(y)\|_{\F_p(\NN)}^p, C(a,b)\}.
\end{equation}
Moreover, we observe that in the $p$-metric space $\MM'$ we have
\begin{equation}\begin{split}\label{eq:optimalExtensionFormula}
T_{d'}(\alpha)^p & = |a|^pd'^p(x,z) + |b|^pd'^p(y,z) + |a+b|^pd'^p(z,0)= C(a,b)\\
&  \stackrel{\text{Lemma~\ref{lem:technical}}}{\leq} \norm{a\delta(x)+b\delta(y)}_{\F_p(\NN)}^p,
\end{split}\end{equation}
so we have $\norm{a\delta(x)+b\delta(y)}_{\F_p(\MM')}^p = C(a,b)$ and in conjuction with \eqref{eq:optimalAddinPointReformulated} we obtain \eqref{eq:optimalAddingPoint}. Finally, by Lemma~\ref{lem:technical} we have equality in \eqref{eq:optimalExtensionFormula} if and only if $ab=0$ or $a=-b$, which proves that  $\norm{a\delta(x) + b\delta(y)}_{\F_p(\MM')}^p = \norm{a\delta(x) + b\delta(y)}_{\F_p(\NN)}^p$ if and only if $ab=0$ or $a=-b$.
\end{proof}

\begin{theorem}\label{thm:isomAmen}Let $(\NN,d,0)$ be a finite pointed $p$-metric space for some $p\in(0,1)$ with $\NN=\{0,x,y\}$. Then the following conditions are equivalent.
\begin{enumerate}[(a)]
    \item\label{it:2pointsCondition} One of the $p$-inequalities among distances between points $\{0,x,y\}$ is equality.
    \item\label{it:2pointsAmenable} We have $\absamen_p(\NN) = 1$. 
\end{enumerate}
Moreover, if \ref{it:2pointsAmenable} does not hold, then $\absamen_p(\NN) = \amen_p(\NN,\MM')$, where $\MM' = \NN\cup\{z\}$ and the $p$-metric $d'$ on $\MM'$ is given by the formula from Proposition~\ref{prop:optimalAddPoint}.
\end{theorem}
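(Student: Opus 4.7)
The plan is to reduce arbitrary $p$-metric extensions to one-point extensions via Proposition~\ref{prop:addTooManyPoints}, and then to exploit Lemma~\ref{lem:technical} as a universal lower bound that is matched by the particular extension $\MM'$ constructed in Proposition~\ref{prop:optimalAddPoint}. For any finite $p$-metric $\MM\supset\NN$ and any $a,b\in\R$, Proposition~\ref{prop:addTooManyPoints}(a) (applied with $|\NN|-2=1$) identifies $\|a\delta(x)+b\delta(y)\|_{\F_p(\MM)}$ with its value on some four-point subspace $F=\NN\cup\{z\}\subset\MM$, and Proposition~\ref{prop:addTooManyPoints}(b) evaluates the latter as the minimum of $\|a\delta(x)+b\delta(y)\|_{\F_p(\NN)}^p$ and $T_d(a,b)^p$, where $T$ is the unique admissible tree on $F$, with edges $(0,z),(z,x),(z,y)$. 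Applying Lemma~\ref{lem:technical} twice, once as $T_d(a,b)^p\geq C$ and once as $C\leq\|a\delta(x)+b\delta(y)\|_{\F_p(\NN)}^p$ (with $C=C(a,b)$ the explicit quantity defined there), yields the clean universal bound
\[
\|a\delta(x)+b\delta(y)\|_{\F_p(\MM)}^p\;\geq\;C(a,b)\quad\text{for every $p$-metric }\MM\supset\NN.
\]

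The implication \ref{it:2pointsCondition}$\Rightarrow$\ref{it:2pointsAmenable} will then be immediate: under \ref{it:2pointsCondition} the equality characterization in Lemma~\ref{lem:technical} forces $C(a,b)=\|a\delta(x)+b\delta(y)\|_{\F_p(\NN)}^p$ for every $a,b\in\R$, so the displayed inequality becomes an equality and $\F_p(\NN)$ embeds isometrically into every $\F_p(\MM)$. For \ref{it:2pointsAmenable}$\Rightarrow$\ref{it:2pointsCondition} and the moreover clause I will assume $\neg$\ref{it:2pointsCondition} and invoke Proposition~\ref{prop:optimalAddPoint} to construct the $p$-metric space $\MM'$, which saturates the lower bound in the sense that $\|a\delta(x)+b\delta(y)\|_{\F_p(\MM')}^p=C(a,b)$. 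The universal bound then reads $\|\cdot\|_{\F_p(\MM)}\geq\|\cdot\|_{\F_p(\MM')}$ for every extension $\MM\supset\NN$, whence $\amen_p(\NN,\MM)\leq\amen_p(\NN,\MM')$ and consequently $\absamen_p(\NN)=\amen_p(\NN,\MM')$. Finally, the equality case of Proposition~\ref{prop:optimalAddPoint} supplies coefficients (e.g.\ $a=b=1$, where $ab\neq 0$ and $a\neq -b$) satisfying $\|a\delta(x)+b\delta(y)\|_{\F_p(\MM')}<\|a\delta(x)+b\delta(y)\|_{\F_p(\NN)}$ strictly, so $\amen_p(\NN,\MM')>1$ and \ref{it:2pointsAmenable} fails.

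I do not anticipate a genuine obstacle here; Lemma~\ref{lem:technical} and Proposition~\ref{prop:optimalAddPoint} are tailored precisely for this moment. The only bookkeeping care needed is in two spots: verifying that in the one-point case the admissible class $\mathfrak{G}$ of Proposition~\ref{prop:addTooManyPoints}(b) collapses to the single tree with edges $(0,z),(z,x),(z,y)$, so that the chain of inequalities really does reduce to Lemma~\ref{lem:technical}; and observing that under \ref{it:2pointsCondition} the construction of $\MM'$ in Proposition~\ref{prop:optimalAddPoint} degenerates (one of the proposed distances vanishes), which is precisely the reason no strict extremal extension exists and isometric $p$-amenability is automatic.
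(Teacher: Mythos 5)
Your proposal is correct and follows essentially the same route as the paper: reduction to one-point extensions via Proposition~\ref{prop:addTooManyPoints}, the lower bound $C(a,b)$ from Lemma~\ref{lem:technical} (whose equality characterization gives (a)$\Rightarrow$(b)), and Proposition~\ref{prop:optimalAddPoint} supplying the extremal extension $\MM'$ for the converse and the ``Moreover'' clause. The only cosmetic difference is that you package the argument as a single universal lower bound $\|a\delta(x)+b\delta(y)\|_{\F_p(\MM)}^p\geq C(a,b)$ valid for all extensions, which the paper states only implicitly.
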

\begin{proof}
\ref{it:2pointsCondition}$\Rightarrow$\ref{it:2pointsAmenable}: By Proposition~\ref{prop:addTooManyPoints}, it suffices to prove that $\amen_p(\NN,\MM) = 1$ whenever $\MM = \NN\cup \{z\}$ for some $z\notin \NN$. Pick $a\in\R^{\NN\setminus\{0\}}$. There is only one tree $T = (\MM,E,0)$ with $|\desc(z)|=2$ and this is the tree with edges $(0,z)$, $(z,x)$, $(z,y)$. But for this tree we obtain from Lemma~\ref{lem:technical} that $T(a)^p\geq C$, where 
\[
C = \norm{a_x\delta(x) + a_y\delta(y)}_{\F_p(\NN)}^p.
\]
Thus, by Proposition~\ref{prop:addTooManyPoints} we obtain $\norm{a_x\delta(x) + a_y\delta(y)}_{\F_p(\NN)} = \norm{a_x\delta(x) + a_y\delta(y)}_{\F_p(\MM)}$ and since $a$ was arbitrary, $\NN$ is isometrically $p$-amenable in $\MM$.\\
\ref{it:2pointsAmenable}$\Rightarrow$\ref{it:2pointsCondition}: If no one of the $p$-inequalities among distances between points $\{0,x,y\}$ is equality, we consider the $p$-metric space $\MM' = \{0,x,y,z\}$ with the distance $d'$ as in Proposition~\ref{prop:optimalAddPoint}. Then by Proposition~\ref{prop:optimalAddPoint} we have $\|a\delta(x) + b\delta(y)\|_{\F_p(\MM')} < \|a\delta(x) + b\delta(y)\|_{\F_p(\NN)}$ for example for the choice $a=b=1$, so we have $\absamen_p(\NN) > 1$ and the ``Moreover'' part follows from Proposition~\ref{prop:optimalAddPoint} as well.
\end{proof}

\subsection{Reduction to special \texorpdfstring{$p$}{p}-metric spaces and their subspaces}\label{subsec:reductionToTrees} The aim of this subsection is to prove Theorem~\ref{thm:reductionExamples} which substantially reduces the class of $p$-metric spaces we need to consider when dealing with the estimate of the value of $\absamen_p^q(n,k)$. We start with the description of $p$-metric spaces given by weighted trees.

\begin{definition}Let $T = (V,E,w)$ be a finite weighted tree, that is, $T=(V,E)$ is a tree (unoriented) and $w:E\to (0,\infty)$ is a function. Given $x,y\in \MM$ by the \emph{path} from $x$ to $y$ in $T$ we understand the sequence of pairwise distinct vertices $x_1,\ldots,x_n\in V$ with $x_1=x$, $x_n=y$ satisfying that $\{x_i,x_{i+1}\}\in E$ for every $i=1,\ldots,n-1$. Note that since $T$ is a tree, such a path is always unique. Further, for $p\in (0,1]$ we define the $p$-metric $d_{w,p}$ on $V$ by the formula
\[
d_{w,p}(x,y):=\Big(\sum_{i=1}^{n-1}w^p(x_i,x_{i+1})\Big)^{1/p},\quad x,y\in V,
\]
where $x_1,\ldots,x_n$ is the path from $x$ to $y$ in $T$. We say that the $p$-metric space $V_{T,p}:=(V,d_{w,p})$ is the \emph{$p$-metric space associated to the weighted tree $T$}.
\end{definition}

\begin{notation}
Let $(\MM,d,0)$ be a finite pointed $p$-metric space for some $p\in(0,1]$ and $T\in\T(\MM)$. Endow the tree $T$ with weights $w(e):=d(e)$, $e\in E$. By $d_{T,p}$ we denote the $p$-metric $d_{w,p}$ on the $p$-metric space $\MM_{T,p} = (\MM,d_{w,p})$ associated to the weighted tree $T$. If $\NN\subset \MM$ we put $\NN_{T,p}:=(\NN,d_{T,p})$. When $p$ is clear from the context, we write $\MM_T$, $d_T$ and $\NN_T$ instead of $\MM_{T,p}$, $d_{T,p}$ and $\NN_{T,p}$, respectively.\\
(Warning: if $\NN\subsetneq \MM$, then $T\notin \TT(\NN)$, so the symbol $\NN_T$ denotes different $p$-metric spaces depending on whether $T\in \T(\NN)$ or $T\in\TT(\MM)$ for some $\MM\supset \NN$.)
\end{notation}

The starting point of our argument is to observe that when trying to find the value of $\absamen_p^q(n,k)$, it suffices to consider $p$-metric spaces coming from weighted trees and their subspaces.

\begin{lemma}\label{lem:modifySubspace}Let $0<p\leq q\leq 1$ and let $(\MM,d,0)$ be a finite pointed $q$-metric space. Let $0\in \NN\subset \MM$ be its subspace, $a\in \R^{\NN\setminus\{0\}}$ and $T\in \TT(\MM)$.

Then for the $q$-metric space $\MM_{T}$ and its subspace $\NN_{T}$ we have $T_{d_{T}}(a) = T_d(a)$ and 
\begin{equation}\label{eq:improveSmall}
\Big\|\sum_{x\in\NN\setminus\{0\}} a_x\delta(x)\Big\|_{\F_p(\NN_{T})}\geq \Big\|\sum_{x\in\NN\setminus\{0\}} a_x\delta(x)\Big\|_{\F_p(\NN)}.
\end{equation}

In particular, for every $a\in \R^{\NN\setminus\{0\}}$ and $T\in \TT(\MM)$ satisfying $T(a)=1=\|\sum_{x\in\NN\setminus\{0\}}a_x\delta(x)\|_{\F_p(\MM)}$ and $\amen_p(\NN,\MM) = \|\sum_{x\in\NN\setminus\{0\}}a_x\delta(x)\|_{\F_p(\NN)}$, we have 
\[
\amen_p(\NN,\MM)\leq \frac{\|\sum_{x\in\NN\setminus\{0\}}a_x\delta(x)\|_{\F_p(\NN_T)}}{T_{d_T}(a)}\leq \amen_p(\NN_{T},\MM_{T}).
\]
\end{lemma}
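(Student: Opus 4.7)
My plan is to separate the lemma into three short observations, each exploiting the weighted-tree structure of $d_T$ in a different way: (i) $d_T$ agrees with $d$ on the edges of $T$; (ii) $d_T$ dominates $d$ everywhere on $\MM$; and (iii) the collection of rooted trees $\TT(\MM_T)$ coincides with $\TT(\MM)$, so Theorem~\ref{thm:algorithm} transfers unchanged to the modified metric.

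For the equality $T_{d_T}(a)=T_d(a)$ I would use (i): since $d_T$ is $d_{w,p}$ with weights $w(e):=d(e)$, for every edge $e=\{u,v\}\in E^T$ the unique path in $T$ between $u$ and $v$ is the single edge $e$, giving $d_T(e)=d(e)$. Because $c_T(x,a)$ does not depend on the metric, term-by-term comparison in the definition
\[
T_d(a)^p=\sum_{x\in\MM\setminus\{0\}}|c_T(x,a)\,d(e_x^T)|^p
\]
with its $d_T$-analogue yields the claimed equality.

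For \eqref{eq:improveSmall} I would use (ii): iterating the $p$-triangle inequality in $(\MM,d)$ along the path $x_1=x,\dots,x_n=y$ in $T$ gives $d(x,y)^p\leq \sum_{i=1}^{n-1} d(x_i,x_{i+1})^p=d_T(x,y)^p$. Restricted to $\NN$, this makes the identity map $(\NN,d_T|_\NN)\to(\NN,d|_\NN)$ a $1$-Lipschitz function, whose canonical linearization $L:\F_p(\NN_T)\to\F_p(\NN)$ therefore satisfies $\|L\|\leq 1$ and $L(\delta(x))=\delta(x)$; applying $L$ to $\sum_{x\in\NN\setminus\{0\}}a_x\delta(x)$ gives the stated inequality.

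For the ``In particular'' clause, the two previous observations together with the hypotheses $T(a)=1=\|\sum_{x\in\NN\setminus\{0\}}a_x\delta(x)\|_{\F_p(\MM)}$ and $\amen_p(\NN,\MM)=\|\sum_{x\in\NN\setminus\{0\}}a_x\delta(x)\|_{\F_p(\NN)}$ yield the first inequality upon dividing by $T_{d_T}(a)=T_d(a)=1$. For the second, I would apply (iii) and Theorem~\ref{thm:algorithm} in $\MM_T$ with the tree $T\in\TT(\MM_T)$ to obtain $\|\sum_{x\in\NN\setminus\{0\}}a_x\delta(x)\|_{\F_p(\MM_T)}\leq T_{d_T}(a)$, so the definition of $\amen_p(\NN_T,\MM_T)=\|L_j^{-1}\|$ supplies the remaining ratio bound. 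I do not foresee any serious obstacle; the only technical point deserving explicit verification is that $d_T$ is in fact a $p$-metric on $\MM$, which follows from the tree-median observation that for any $x,y,z$ the edges of the path from $x$ to $z$ lie in the union of those of the paths from $x$ to $y$ and from $y$ to $z$.
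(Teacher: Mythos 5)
Your argument is correct and follows essentially the same route as the paper: both rest on the two observations that $d_T$ agrees with $d$ on the edges of $T$ (giving $T_{d_T}(a)=T_d(a)$) and that $d_T\geq d$ everywhere, and the ``In particular'' chain of inequalities is identical. The only genuine difference is cosmetic: for \eqref{eq:improveSmall} you linearize the $1$-Lipschitz identity map $(\NN,d_T|_\NN)\to(\NN,d|_\NN)$, whereas the paper compares $S_d(a)\leq S_{d_T}(a)$ over all $S\in\TT(\NN)$ and invokes Theorem~\ref{thm:algorithm}; both are one-line arguments and equally valid. One small slip to fix: since $\MM$ is a $q$-metric space and $\MM_T$ carries $d_{T,q}=d_{w,q}$, the domination $d\leq d_T$ comes from the $q$-triangle inequality, $d(x,y)^q\leq\sum_{i}d(x_i,x_{i+1})^q=d_T(x,y)^q$; your displayed identity $\sum_i d(x_i,x_{i+1})^p=d_T(x,y)^p$ is false when $p<q$, although the conclusion you draw from it is unaffected once the exponent is corrected.
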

\begin{proof}Since for any two points $x,y\in\MM$ connected by an edge in $T$ we have $d_T(x,y) = d(x,y)$, by the definition we immediately obtain $T_{d_T}(a) = T_{d}(a)$. Moreover, using the triangle $q$-inequality we obtain $d_T(x,y)\geq d(x,y)$ for any $x,y\in\MM$. Thus, for any $S\in\TT(\NN)$ we obtain $S_{d}(a)\leq S_{d_T}(a)$, which by Theorem~\ref{thm:algorithm} implies \eqref{eq:improveSmall}.

In order to prove the ``In particular'' part, let $a\in \R^{\NN\setminus\{0\}}$ and $T\in \TT(\MM)$ be such that $T(a)=1=\|\sum_{x\in\NN\setminus\{0\}}a_x\delta(x)\|_{\F_p(\MM)}$ and $\amen_p(\NN,\MM) = \|\sum_{x\in\NN\setminus\{0\}}a_x\delta(x)\|_{\F_p(\NN)}$.  Then by the already proven part we have $T_{d_{T}}(a) = T_{d}(a)=1$ and so, denoting by $L_{j,T}$ the canonical linearization of the inclusion map, $j_T:\NN_T\to \MM_T$, 
\[\begin{split}
\amen_p(\NN_{T},\MM_{T}) & = \|L_{j,T}^{-1}\| \geq \frac{\|\sum_{x\in\NN\setminus\{0\}}a_x\delta(x)\|_{\F_p(\NN_T)}}{\|\sum_{x\in\NN\setminus\{0\}}a_x\delta(x)\|_{\F_p(\MM_T)}}\geq  \frac{\|\sum_{x\in\NN\setminus\{0\}}a_x\delta(x)\|_{\F_p(\NN_T)}}{T_{d_T}(a)}\\
& \stackrel{\eqref{eq:improveSmall}}{\geq} \frac{\|\sum_{x\in\NN\setminus\{0\}}a_x\delta(x)\|_{\F_p(\NN)}}{T_{d}(a)} = \amen_p(\NN,\MM).
\end{split}\]
\end{proof}

Now, we shall observe that when trying to find the value of $\absamen_p^q(n,k)$, it suffices to consider $p$-metric spaces coming from weighted trees and their subspaces given by leaves of the corresponding tree. We start with the following technical observation.

\begin{lemma}\label{lem:splitTheTree}Let $p\in (0,1]$, $(\MM,d,0)$ be a finite pointed $p$-metric space and $M_1, M_2\subset \MM$ subsets satisfying that $M_1\cup M_2 = \MM$, $0\in M_2$, $M_1\cap M_2 = \{x_0\}$ for some $x_0\in \MM\setminus \{0\}$ and $|M_1|\geq 2$. Consider now the pointed $p$-metric spaces $\MM_1 = (M_1,d,x_0)$ and $\MM_2 = (M_2,d,0)$. Assume that $T\in\TT(\MM)$, $T_1\in \TT(\MM_1)$, $T_2\in\TT(\MM_2)$ are such that $E^T = E^{T_1}\cup E^{T_2}$ (that is, if we ``glue the trees $T_1$ and $T_2$ at the vertex $x_0$'', we obtain the tree $T$). Let $a\in \R^{\MM\setminus\{0\}}$ be given. Then
\[
T(a)^p = T_1(a_1)^p + T_2(a_2)^p,
\]
where $a_1:=a|_{\MM_1\setminus\{x_0\}}$ and 
\[
a_2(y):=\begin{cases} a(y) & \text{ if }y\in\MM_2\setminus\{0,x_0\},\\
\sum_{z\in \MM_1} a(z) & \text{ if }y = x_0.
\end{cases}
\]
\end{lemma}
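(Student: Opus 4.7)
The plan is to expand both sides via the defining formula $T(a)^p = \sum_{x \in \MM \setminus \{0\}} |c_T(x,a)\,d(e_x^T)|^p$, partition the index set as
\[
\MM \setminus \{0\} = (M_1 \setminus \{x_0\}) \sqcup (M_2 \setminus \{0\})
\]
(a valid disjoint union, since $x_0 \in M_2 \setminus \{0\}$), and verify a summand-by-summand identification with the respective summands of $T_1(a_1)^p$ and $T_2(a_2)^p$.

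The structural fact I would establish first is that, since $E^{T_1} \subset M_1 \times M_1$ and $E^{T_2} \subset M_2 \times M_2$ meet only at $x_0$, the sets $E^{T_1}$ and $E^{T_2}$ are disjoint, and the unique path in $T$ from $0$ to any $x \in M_2$ stays inside $M_2$ (because the subgraph of $T$ spanned by $M_2$ is already the connected tree $T_2$), while the unique path from $0$ to $x \in M_1 \setminus \{x_0\}$ must pass through $x_0$ and then follow the path in $T_1$. This identifies $e_x^T = e_x^{T_1}$ and $\pred_T(x) = \pred_{T_1}(x)$ whenever $x \in M_1 \setminus \{x_0\}$, and $e_x^T = e_x^{T_2}$ and $\pred_T(x) = \pred_{T_2}(x)$ whenever $x \in M_2 \setminus \{0\}$ (in particular for $x = x_0$). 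In particular, the edge weights appearing in the three formulas $T(a)^p$, $T_1(a_1)^p$, $T_2(a_2)^p$ coincide on the matching partition.

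Next I would match the coefficients $c_T(x,a)$. For $x \in M_1 \setminus \{x_0\}$, cutting $e_x^T$ isolates $x$'s side from $x_0$ (and hence from all of $M_2$), so $V_x^T = V_x^{T_1} \subset M_1 \setminus \{x_0\}$; combined with $a_1 = a|_{M_1 \setminus \{x_0\}}$ this gives $c_T(x,a) = c_{T_1}(x, a_1)$. For $x \in M_2 \setminus \{0\}$ the cut $e_x^T = e_x^{T_2}$ yields two subcases: either $x_0 \notin V_x^{T_2}$, in which case $V_x^T = V_x^{T_2}$ and $a_2$ agrees with $a$ on $V_x^{T_2}$; or $x_0 \in V_x^{T_2}$, in which case the whole $T_1$-branch is attached at $x_0$ on $x$'s side, giving $V_x^T = V_x^{T_2} \cup (M_1 \setminus \{x_0\})$. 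A short computation using
\[
a_2(x_0) = \sum_{z \in M_1} a(z) = a(x_0) + \sum_{z \in M_1 \setminus \{x_0\}} a(z)
\]
shows $c_T(x,a) = c_{T_2}(x, a_2)$ in both subcases. Summing over the partition then yields $T(a)^p = T_1(a_1)^p + T_2(a_2)^p$.

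The only real subtlety is the bookkeeping at $x_0$: this point appears both as a vertex of $M_2 \setminus \{0\}$ (where the subtree $V_{x_0}^T$ silently acquires all of $M_1 \setminus \{x_0\}$ through $T_1$) and as a potential element of $V_x^{T_2}$ for $T_2$-ancestors of $x_0$. The definition $a_2(x_0) = \sum_{z \in M_1} a(z)$ is engineered precisely so that the extra mass coming from the $T_1$-branch is absorbed uniformly in both situations; once this is understood, the rest is just summing and comparing.
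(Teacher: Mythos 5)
Your proposal is correct and follows essentially the same route as the paper: identify $e_x^T$ with $e_x^{T_1}$ or $e_x^{T_2}$ on the partition $\MM\setminus\{0\}=(M_1\setminus\{x_0\})\sqcup(M_2\setminus\{0\})$, match $c_T(x,a)$ with $c_{T_1}(x,a_1)$ resp. $c_{T_2}(x,a_2)$ via the two subcases according to whether $x_0\in V_x^{T}$, and sum. The key computation absorbing the mass of the $T_1$-branch into $a_2(x_0)$ is exactly the one in the paper.
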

\begin{proof}By the assumption that $|M_1|\geq 2$, both $\MM_1$ and $\MM_2$ are nontrivial pointed metric spaces (that is, they contain at least two points).
Note that for every $x\in \MM_1\setminus\{x_0\}$ we have $V_x^T = V_x^{T_1}$ and so $c_T(x,a) = c_{T_1}(x,a_1)$. For for $x\in \MM_2\setminus \{0\}$ we \emph{claim} that $c_T(x,a) = c_{T_2}(x,a_2)$.

Indeed, pick $x\in \MM_2\setminus \{0\}$. Then $V_x^T = V_x^{T_2}$ if $x_0\notin V_{x}^T$ and otherwise $V_x^T = \MM_1\cup V_x^{T_2}$. Thus, $c_T(x,a) = c_{T_2}(x,a_2)$ if $x_0\notin V_{x}^T$ and otherwise we have
\[\begin{split}
c_T(x,a) & = \sum_{y\in \MM_1\cup V_x^{T_2}} a(y) = \sum_{y\in V_x^{T_2}\setminus \{x_0\}} a (y) + \sum_{y\in \MM_1} a(y)\\
& = \sum_{y\in V_x^{T_2}\setminus \{x_0\}} a_2(y)  + a_2(x_0) = c_{T_2}(x,a_2),
\end{split}\]
which proves the claim.

Moreover, we have $e_x^T = e_x^{T_1}$ and $e_x^T = e_x^{T_2}$ for $x\in \MM_1\setminus\{x_0\}$ and $x\in \MM_2\setminus\{0\}$, respectively. Thus, we obtain
\[\begin{split}
T(a)^p & = \sum_{x\in\MM\setminus\{0\}} |c_T(x,a) d(e^T_x)|^p = \sum_{x\in\MM_1\setminus\{x_0\}} |c_{T}(x,a) d(e^T_x)|^p + \sum_{x\in\MM_2\setminus\{0\}} |c_{T}(x,a)d(e^T_x)|^p\\ 
&  =  \sum_{x\in\MM_1\setminus\{x_0\}} |c_{T_1}(x,a_1)d(e^{T_1}_x)|^p + \sum_{x\in\MM_2\setminus\{0\}} |c_{T_2}(x,a_2)d(e^{T_2}_x)|^p \\
& = T_1(a_1)^p + T_2(a_2)^p.
\end{split}\]
\end{proof}

\begin{proposition}\label{prop:leafesInSubspace}
Let $p\in (0,1)$, $\MM$ be a finite $p$-metric space,  $\NN\subset \MM$ its subspace such that $\amen_p(\NN,\MM) > \amen_p(\NN,\MM\setminus\{z\})$ for every $z\in\MM\setminus\NN$. Pick $a\in \R^{\NN\setminus\{0\}}$ and $T\in\TT(\MM)$ with 
\[
\amen_p(\NN,\MM) = \frac{\|\sum_{x\in\NN\setminus\{0\}}a_x\delta(x)\|_{\F_p(\NN)}}{T(a)}.
\]
If $\leaf(T)\neq \NN\setminus\{0\}$, then there exists $\MM'\subsetneq \MM$ and $\NN'\subset \MM'$, $|\NN'|<|\NN|$ such that $\amen_p(\NN,\MM)\leq \amen_p(\NN',\MM')$.
\end{proposition}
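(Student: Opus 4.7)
The plan is to proceed in two stages: first rule out the possibility that $T$ has a leaf outside $\NN\setminus\{0\}$ (using the hypothesis on $\amen_p$), and second split $\MM$ at a non-leaf $\NN$-vertex of $T$ and transfer the estimate to the pieces. First I would show that $\leaf(T)\subset \NN\setminus\{0\}$: if some $z\in\leaf(T)$ belonged to $\MM\setminus\NN$, then (extending $a$ by zero) $c_T(z,a)=a_z=0$, and the tree $T'$ on $\MM\setminus\{z\}$ obtained by deleting $e_z^T$ satisfies $T'(a)=T(a)$. By Theorem~\ref{thm:algorithm} this gives $\|\nu\|_{\F_p(\MM\setminus\{z\})}\le T(a)$, hence $\amen_p(\NN,\MM\setminus\{z\})\ge\|\nu\|_{\F_p(\NN)}/T(a)=\amen_p(\NN,\MM)$, contradicting the hypothesis. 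Combined with $\leaf(T)\neq\NN\setminus\{0\}$, there exists $x_0\in(\NN\setminus\{0\})\setminus\leaf(T)$, i.e.\ an internal $\NN$-vertex.

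Next I would split the tree at $x_0$. Set $M_1:=V_{x_0}^T$ (pointed at $x_0$) and $M_2:=(\MM\setminus M_1)\cup\{x_0\}$ (pointed at $0$), and let $T_1\in\TT(\MM_1)$, $T_2\in\TT(\MM_2)$ be the induced subtrees, so that $E^T=E^{T_1}\cup E^{T_2}$. Lemma~\ref{lem:splitTheTree} gives
\[
T(a)^p=T_1(a_1)^p+T_2(a_2)^p,
\]
with $a_2(x_0)=\sum_{y\in\NN\cap M_1}a_y$ and the remaining coordinates inherited from $a$. Define $\NN_i:=\NN\cap M_i$. Since $\leaf(T)\subset\NN\setminus\{0\}$ and $V_{x_0}^T$ necessarily contains a leaf of $T$ distinct from $x_0$, we get $\NN_1\setminus\{x_0\}\neq\emptyset$, so $|\NN_i|\ge 2$; the identity $|\NN_1|+|\NN_2|=|\NN|+1$ (with $x_0$ counted twice) together with $0\in\NN_2\setminus M_1$ then forces $|\NN_i|<|\NN|$ for both $i$.

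The heart of the proof is the norm decomposition
\[
\|\nu\|_{\F_p(\NN,0)}^p\le \|\mu_1\|_{\F_p(\NN_1,x_0)}^p+\|\mu_2\|_{\F_p(\NN_2,0)}^p,
\]
where $\mu_1:=\sum_{y\in\NN_1\setminus\{x_0\}}a_y\delta(y)$ and $\mu_2:=\sum_{y\in\NN_2\setminus\{0\}}a_2(y)\delta(y)$. To prove it I would use the algebraic identity in $\PP(\NN)$:
\[
\nu=\sum_{y\in\NN_1\setminus\{x_0\}}a_y\bigl(\delta(y)-\delta(x_0)\bigr)+\sum_{y\in\NN_2\setminus\{0\}}a_2(y)\delta(y),
\]
recognize the first summand as the image of $\mu_1$ under the canonical linearization of the $1$-Lipschitz inclusion $\NN_1\hookrightarrow\NN$ (both pointed at $x_0$), composed with the canonical isometry $\F_p(\NN,x_0)\to\F_p(\NN,0)$ furnished by Fact~\ref{fact:choiceOfZero}, and the second summand as the image of $\mu_2$ under the canonical linearization of the $1$-Lipschitz inclusion $\NN_2\hookrightarrow\NN$; the $p$-triangle inequality delivers the claim. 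This bookkeeping of base points is what I expect to be the main technical obstacle.

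Finally, combining the norm decomposition with the amenability bound $\|\mu_i\|_{\F_p(\NN_i)}\le\amen_p(\NN_i,\MM_i)\|\mu_i\|_{\F_p(\MM_i)}$, then with $\|\mu_i\|_{\F_p(\MM_i)}\le T_i(a_i)$ (Theorem~\ref{thm:algorithm}), and the identity $\|\nu\|_{\F_p(\NN)}^p=\amen_p(\NN,\MM)^p\bigl(T_1(a_1)^p+T_2(a_2)^p\bigr)$, I obtain
\[
\amen_p(\NN,\MM)^p\bigl(T_1(a_1)^p+T_2(a_2)^p\bigr)\le \amen_p(\NN_1,\MM_1)^p T_1(a_1)^p+\amen_p(\NN_2,\MM_2)^p T_2(a_2)^p.
\]
Since $T(a)>0$, this forces $\amen_p(\NN,\MM)\le\max_i\amen_p(\NN_i,\MM_i)$, and taking $(\NN',\MM')=(\NN_i,\MM_i)$ for the $i$ attaining the maximum completes the proof, as $\MM_i\subsetneq\MM$ and $|\NN_i|<|\NN|$ by the previous step.
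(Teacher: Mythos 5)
Your proposal is correct and follows essentially the same route as the paper: establish $\leaf(T)\subset\NN\setminus\{0\}$, split $T$ at an internal vertex $x_0\in\NN$ via Lemma~\ref{lem:splitTheTree}, and conclude with the mediant inequality $\frac{A_1+A_2}{B_1+B_2}\leq\max_i\frac{A_i}{B_i}$. The only (harmless) local deviations are that the paper gets $\leaf(T)\subset\NN\setminus\{0\}$ by citing Proposition~\ref{prop:omitPoints} rather than deleting the offending leaf by hand, and it obtains the key estimate $\|\nu\|_{\F_p(\NN)}^p\leq\|\mu_1\|_{\F_p(\NN_1)}^p+\|\mu_2\|_{\F_p(\NN_2)}^p$ by gluing optimal trees $S_1,S_2$ for $\NN_1,\NN_2$ and applying Lemma~\ref{lem:splitTheTree} a second time, whereas you derive it from the canonical linearizations of the inclusions together with the base-point-change isometry and the $p$-triangle inequality.
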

\begin{proof}
Since $\amen_p(\NN,\MM) > \amen_p(\NN,\MM\setminus\{z\})$ for every $z\in\MM\setminus\NN$, by Proposition~\ref{prop:omitPoints} we have $|\desc_T(z)|\geq 2$ for every $z\in\MM\setminus\NN$ and so in particular $\leaf(T)\subset \NN\setminus\{0\}$. Since $\leaf(T)\neq \NN\setminus\{0\}$, there exists $x_0\in \NN\setminus (\leaf(T)\cup \{0\})$. Put $M_1:=V_{x_0}^T$, $M_2:=(\MM\setminus M_1)\cup \{x_0\}$ and consider the $p$-metric spaces $\MM_1:=(M_1,d,x_0)$ and $\MM_2:=(M_2,d,0)$. For $i\in\{1,2\}$, let $T_i\in\TT(\MM_i)$ denote restrictions of the tree $T$ to vertices $\MM_i$ (that is, $T_i = (\MM_i,E^T\cap (\MM_i)^2,0_{\MM_i})$). By Lemma~\ref{lem:splitTheTree} (the assumption that $|M_1|\geq 2$ is satisfied, because $x_0\notin \leaf(T)$ and so $|V_{x_0}^T|\geq 2$) for $a_1:=a|_{\MM_1\setminus\{x_0\}}$ and
\[
a_2(y):=\begin{cases} a(y) & \text{ if }y\in\MM_2\setminus\{0,x_0\},\\
\sum_{z\in \MM_1} a(z) & \text{ if }y = x_0
\end{cases}
\]
we obtain $T(a)^p = T_1(a_1)^p + T_2(a_2)^p$. Further, put $\NN_1:=(M_1\cap\NN,d,x_0)$, $\NN_2:=(M_2\cap \NN,d,0)$ and, using Theorem~\ref{thm:algorithm}, for $i\in\{1,2\}$ pick $S_i\in\TT(\NN_i)$ with $S_i(a_i|_{\NN_i}) = \|\sum_{x\in \NN_i} a_i(x) \delta(x)\|_{\F_p(\NN_i)}$. Let us denote by $S\in\TT(\NN)$ the union of trees $S_1$ and $S_2$, that is, $S = (\NN,E^{S_1}\cup E^{S_2},0)$. Note that since $\leaf(T)\subset \NN$, we have $|\NN_1|\geq 2$ and that $\{x\in\MM\colon a(x)\neq 0\}\subset \NN\cup \{x_0\}=\NN$. Thus, again by Lemma~\ref{lem:splitTheTree}, we have $S(a|_{\NN})^p = S_1(a_1|_{\NN_1})^p + S_2(a_2|_{\NN_2})^p$. Finally, we obtain
\[\begin{split}
\amen_p(\NN, & \MM)^p = \frac{\|\sum_{x\in\NN\setminus\{0\}}a_x\delta(x)\|^p_{\F_p(\NN)}}{T(a)^p} \leq \frac{S(a|_{\NN})^p}{T(a)^p}\\
&  = \frac{S_1(a_1|_{\NN_1})^p + S_2(a_2|_{\NN_2})^p}{T_1(a_1)^p + T_2(a_2)^p}
 \leq \max\Big\{\frac{S_1(a_1|_{\NN_1})^p}{T_1(a_1)^p},\frac{S_2(a_2|_{\NN_2})^p}{T_2(a_2)^p}\Big\}\\
& \leq \max\Big\{\frac{\|\sum_{x\in \NN_1} a_1(x)\delta(x)\|^p_{\F_p(\NN_1)}}{\|\sum_{x\in \NN_1} a_1(x)\delta(x)\|^p_{\F_p(\MM_1)}},\frac{\|\sum_{x\in \NN_2} a_2(x)\delta(x)\|^p_{\F_p(\NN_2)}}{\|\sum_{x\in \NN_2} a_2(x)\delta(x)\|^p_{\F_p(\MM_2)}}\Big\}\\
& \leq \max\{\amen_p(\NN_1,\MM_1),\amen_p(\NN_2,\MM_2)\}^p
\end{split}\]
 and so it suffices to put $\NN'=\NN_i$ and $\MM'=\MM_i$ for some $i\in\{1,2\}$. 
\end{proof}

The final reduction is now contained in the following result.

\begin{definition}
For each $n\in\N$, we denote by $\TT(n)$ the set of all the weighted rooted trees with $n+1$ vertices.
\end{definition}

\begin{theorem}\label{thm:reductionExamples}For every $0<p\leq q \leq 1$ with $p<1$ and $n,k\in\N$, $2\leq n<k$ we have
\[\begin{split}
\absamen_p^q(n,k) & = \sup\{\absamen_p^q(n,k-1),\; \amen_p(\leaf(T)\cup \{0\},\MM_{T,q})\colon T\in \TT(k),\; |\leaf(T)| = n\}\\
& = \sup\Big\{\absamen_p^q(n,k-1),\\
& \qquad \sup_{a\in \R^{\leaf(T)}}\frac{\|\sum_{x\in\leaf(T)} a_x\delta(x)\|_{\F_p(\leaf(T)\cup\{0\})}}{T_{d_{T,q}}(a)}\colon T\in \TT(k),\; |\leaf(T)| = n\Big\}.
\end{split}\]

In particular, for every $n\in\N$, $n\geq 2$ we have
\[\begin{split}
\absamen_p^q(n)= \sup\Big\{\amen_p\big(\leaf(T)\cup \{0\},\MM_{T,q}\big)\colon & T\in \TT(k) \text{ for some }n \leq k\leq 2n-1\\
& \text{ and }|\leaf(T)|=n\Big\}.
\end{split}\]
\end{theorem}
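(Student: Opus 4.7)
The strategy is to combine the reduction lemmas from this subsection. For the first equality, the $\geq$ direction is immediate since each term on the right-hand side arises from a pair $(\NN,\MM)$ satisfying the cardinality constraints defining $\absamen_p^q(n,k)$.

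For the $\leq$ direction, I fix $\varepsilon>0$ and pick $(\NN,\MM)$ with $|\NN\setminus\{0\}|\leq n$ and $|\MM\setminus\{0\}|\leq k$ satisfying $\amen_p(\NN,\MM)>\absamen_p^q(n,k)-\varepsilon$. If $|\MM\setminus\{0\}|<k$, or if some $z\in\MM\setminus\NN$ satisfies $\amen_p(\NN,\MM)\leq \amen_p(\NN,\MM\setminus\{z\})$, we bound directly by $\absamen_p^q(n,k-1)$. Otherwise Fact~\ref{fact:attainMin} produces $a\in\R^{\NN\setminus\{0\}}$ and $T\in\TT(\MM)$ with $T(a)=1=\|\sum_x a_x\delta(x)\|_{\F_p(\MM)}$ and $\amen_p(\NN,\MM)=\|\sum_x a_x\delta(x)\|_{\F_p(\NN)}$. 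If $\leaf(T)\neq\NN\setminus\{0\}$, Proposition~\ref{prop:leafesInSubspace} yields a strictly smaller pair $(\NN',\MM')$ with $|\NN'|<|\NN|$ and $\MM'\subsetneq\MM$; since $|\MM'\setminus\{0\}|\leq k-1$, the bound $\amen_p(\NN,\MM)\leq \amen_p(\NN',\MM')\leq \absamen_p^q(n,k-1)$ follows. If instead $\leaf(T)=\NN\setminus\{0\}$, Lemma~\ref{lem:modifySubspace} gives
\[
\amen_p(\NN,\MM)\leq \frac{\|\sum_{x\in\NN\setminus\{0\}} a_x\delta(x)\|_{\F_p(\NN_T)}}{T_{d_T}(a)};
\]
since $\NN=\leaf(T)\cup\{0\}$ as sets and $\NN_T$ carries the restriction of $d_{T,q}$ from $\MM_{T,q}$, regarding $T$ as a weighted rooted tree with $k+1$ vertices shows that this ratio contributes to the second supremum on the right-hand side with $|\leaf(T)|=|\NN\setminus\{0\}|$. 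When this equals $n$ we are done; when it is strictly less than $n$ the contribution is absorbed into $\absamen_p^q(n,k-1)$ by induction on $n$, combined with Proposition~\ref{prop:addTooManyPoints} if needed to trim the ambient further.

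The second equality follows by sandwiching: Theorem~\ref{thm:algorithm} gives $\|\sum_x a_x\delta(x)\|_{\F_p(\MM_{T,q})}\leq T_{d_{T,q}}(a)$, so the second supremum is bounded above by the first; conversely the argument above produces upper bounds on $\amen_p(\NN,\MM)$ directly of the form appearing in the second expression, yielding the reverse inequality. For the ``In particular'' statement I use Proposition~\ref{prop:addTooManyPoints}(a) to reduce to $\absamen_p^q(n,2n-1)$ and iterate the first equality downwards from $k=2n-1$ to $k=n$; the trivial contribution from $\TT(n)$ with $|\leaf(T)|=n$, namely the star rooted at $0$ whose amenability constant is $1$, shows that including the endpoint $k=n$ does not change the supremum.

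The main technical obstacle I anticipate is the edge case $|\leaf(T)|<n$ in the last subcase of the dichotomy: verifying that these contributions really get absorbed into $\absamen_p^q(n,k-1)$ rather than surviving as trees with the wrong leaf count. An induction on $n$ with the recursion unfolded should resolve this, and the remaining steps are routine applications of the lemmas already established.
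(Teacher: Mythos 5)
Your argument is, in its main thrust, exactly the paper's: the chain Fact~\ref{fact:attainMin} $\to$ Proposition~\ref{prop:leafesInSubspace} $\to$ Lemma~\ref{lem:modifySubspace} for the first equality, the sandwich via Theorem~\ref{thm:algorithm} for the second, and Proposition~\ref{prop:addTooManyPoints} plus downward iteration for the ``in particular'' part. Your explicit dichotomy (either some $z\in\MM\setminus\NN$ can be dropped, in which case you fall into $\absamen_p^q(n,k-1)$, or the hypothesis of Proposition~\ref{prop:leafesInSubspace} holds for the extremal pair) is just the contrapositive of what the paper does, and is fine.

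The one place you diverge is the edge case you yourself flag, $|\leaf(T)|=|\NN\setminus\{0\}|<n$, and there your write-up does not close the gap. The claimed absorption into $\absamen_p^q(n,k-1)$ ``by induction on $n$, combined with Proposition~\ref{prop:addTooManyPoints}'' does not obviously work: a pair with $|\NN\setminus\{0\}|=m<n$ and $|\MM\setminus\{0\}|=k$ is a legitimate competitor in the supremum defining $\absamen_p^q(n,k)$, and Proposition~\ref{prop:addTooManyPoints} only lets you shrink the ambient space to size $2m-1\leq k-1$ when $k\geq 2m$; in the remaining range $m+1\leq k\leq 2m-1$ you are left with an $m$-leaf tree on $k+1$ vertices, which is a term of neither $\absamen_p^q(n,k-1)$ nor of the supremum over trees with exactly $n$ leaves, and your induction on $n$ just reproduces the same object. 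The same issue resurfaces at the bottom of your iteration for the ``in particular'' part: the recursion terminates at $\absamen_p^q(n,n)$, which is not the trivial star contribution you describe (it still contains proper pairs $\NN\subsetneq\MM$ with $|\NN\setminus\{0\}|<n$, and by Theorem~\ref{thm:isomAmen} it can exceed $1$). For what it is worth, the paper's own proof does not address this either --- it simply opens with ``assume $|\NN|=n+1$, $|\MM|=k+1$'', i.e.\ it restricts to pairs of exact cardinality without comment --- so you have correctly located the delicate point of the argument; but since you advertise a specific mechanism for resolving it and that mechanism does not go through as stated, this remains a genuine gap in your proposal rather than a routine verification. (One would need an additional padding or monotonicity argument showing that $m$-leaf configurations with $m<n$ are dominated by the right-hand side; note that for the downstream applications, which only use the reduction to tree metrics with the subspace contained in the leaves and do not need the exact leaf count, the issue is harmless.)
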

\begin{proof}Assume that $\NN\subset \MM$ are finite $q$-metric spaces with $|\NN|=n+1$, $|\MM| = k+1$ and $\absamen_p(\NN,\MM) > \absamen_p^q(n,k-1)$. By Fact~\ref{fact:attainMin}, there are $a\in \R^{\NN\setminus\{0\}}$ and $T\in \TT(\MM)$ such that $T(a)=1=\|\sum_{x\in\NN\setminus\{0\}}a_x\delta(x)\|_{\F_p(\MM)}$ and
$\amen_p(\NN,\MM) = \norm{\sum_{x\in\NN\setminus\{0\}}a_x\delta(x)}_{\F_p(\NN)}$. Using Proposition~\ref{prop:leafesInSubspace}, we have $\NN = \leaf(T)\cup \{0\}$. Endow now the tree $T$ with weights $w(e):=d(e)$, $e\in E$ (then $T\in\TT(k)$). By Lemma~\ref{lem:modifySubspace} we have 
\[\begin{split}
\amen_p(\NN,\MM) & \leq \frac{\|\sum_{x\in\NN\setminus\{0\}} a_x\delta(x)\|_{\F_p(\NN_{T,q})}}{T_{d_{T,q}}(a)} \leq \sup_{b\in \R^{\leaf(T)}}\frac{\|\sum_{x\in\leaf(T)} b_x\delta(x)\|_{\F_p(\NN_{T,q})}}{T_{d_{T,q}}(b)}\\
& \leq \amen_p(\NN_{T,q},\MM_{T,q}) = \amen_p(\leaf(T)\cup \{0\},\MM_{T,q}).
\end{split}\]
This proves the required formula for $\absamen_p^q(n,k)$.

The ``In particular'' part easily follows using Proposition~\ref{prop:addTooManyPoints}. 
\end{proof}

\subsection{Estimates of values \texorpdfstring{$\absamen_p^q(n,k)$}{aapq(n,k)}} \label{subsec:estimates}
First of all, recall that if $\NN$ is a $C$-Lipschitz retract of $\MM$, then $\amen_p(\NN,\MM)\leq C$, see e.g. \cite[comment below Definition 2.6]{AACD21}. The following short argument gives us our first estimate.

\begin{fact}\label{fact:upperEstimateOneMorPoint}
    Let $0<p\leq q\leq 1$, $\MM$ be a finite $q$-metric space. Then any $\NN\subset \MM$ with $|\MM\setminus\NN|=1$ is $2^{1/q}$-Lipschitz retract of $\MM$.

    In particular, we have $\absamen_p^q(n,n+1)\leq 2^{1/q}$ for any $n\in\Nat$, $n\geq 2$.
\end{fact}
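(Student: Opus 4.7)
The plan is to construct an explicit nearest-point retraction and use the $q$-triangle inequality to bound its Lipschitz constant. Write $\MM \setminus \NN = \{z\}$, and pick $x_0 \in \NN$ with $d(z,x_0) = \min_{x \in \NN} d(z,x)$ (such a point exists since $\NN$ is finite; when $\NN$ is infinite one uses an approximate nearest point, which introduces an arbitrarily small loss that does not affect the stated constant). Define $r \colon \MM \to \NN$ by $r|_\NN = \operatorname{Id}_\NN$ and $r(z) = x_0$.

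First I would check the Lipschitz constant. For $x,y \in \NN$ one trivially has $d(r(x),r(y)) = d(x,y)$. The only nontrivial case is $d(r(z),r(y)) = d(x_0,y)$ for $y \in \NN$. Using the $q$-triangle inequality and the choice of $x_0$ (which gives $d(x_0,z) \le d(z,y)$) one obtains
\[
d(x_0,y) \le \bigl(d(x_0,z)^q + d(z,y)^q\bigr)^{1/q} \le \bigl(2\, d(z,y)^q\bigr)^{1/q} = 2^{1/q} d(z,y),
\]
so $r$ is $2^{1/q}$-Lipschitz, as required.

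For the ``In particular'' part I would invoke the standard fact recalled just before the statement: if $\NN$ is a $C$-Lipschitz retract of $\MM$, then $\amen_p(\NN,\MM) \le C$ (concretely, the canonical linearization $L_r \colon \F_p(\MM) \to \F_p(\NN)$ satisfies $L_r \circ L_j = \operatorname{Id}_{\F_p(\NN)}$ and $\norm{L_r} \le C$, hence $\norm{L_j^{-1}} \le C$). Since any pair $\NN \subset \MM$ with $|\NN \setminus \{0\}| \le n$ and $|\MM \setminus \{0\}| \le n+1$ has $|\MM \setminus \NN| \le 1$, and the case $|\MM \setminus \NN| = 0$ trivially gives amenability constant $1$, taking the supremum yields $\absamen_p^q(n,n+1) \le 2^{1/q}$.

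I do not expect any real obstacle; the main point is to recognize that one only needs the $q$-metric inequality (not $p$) to bound the retraction, because the distance on $\MM$ is fixed and the fact that every $q$-metric is also a $p$-metric for $p \le q$ means the Lipschitz constant computed with $d$ serves equally well when passing to $\F_p$.
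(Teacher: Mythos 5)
Your proof is correct and is essentially the same as the paper's: both use the nearest-point retraction $r(z)=x_0$ with $d(z,x_0)=d(z,\NN)$ and bound its Lipschitz constant via the $q$-triangle inequality, then invoke the standard fact that a $C$-Lipschitz retract gives $\amen_p(\NN,\MM)\leq C$. (Your aside about infinite $\NN$ is moot since $\MM$ is assumed finite.)
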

\begin{proof}If $\MM\setminus \NN = \{z\}$, the retraction $r:\MM\to \NN$ is given as $r|_{\NN} = \operatorname{id}$ and $r(z):=a$, where $a\in \NN$ is such that $d(z,\NN) = d(z,a)$. Then
\[
d^q(r(z),r(x)) = d^q(a,x)\leq d^q(a,z) + d^q(z,x)\leq 2d^q(z,x),\quad x\in\NN,
\]
which shows that $r$ is indeed $2^{1/q}$-Lipschitz mapping. This implies that $\amen_p(\NN,\MM)\leq 2^{1/q}$. Since $\NN$ and $\MM$ were arbitrary $q$-metric spaces with $|\MM\setminus\NN|=1$, the ``In particular'' part follows.
\end{proof}

This simple observation already shows that the example from \cite[Theorem 6.1]{AACD} is optimal when adding just one point to our $p$-metric space. The following is based on the example from \cite[Theorem 6.1]{AACD}, but we use rather the more elegant way of a proof, which is using our Theorem~\ref{thm:positiveCoef} instead of the technical \cite[Proposition 4.16]{AACD}.

\begin{proposition}\label{prop:estimateOneMorePoint}
For every $0<p\leq q \leq 1$ with $p<1$ and $n\in\N$, $2\leq n$ we have 
\[
\frac{2^{1/q}}{(1 + n^{p-1})^{1/p}}\leq \absamen_p^q(n,n+1)\leq 2^{1/q}.
\]
In particular, $\sup_{n\in\N}\absamen_p^q(n,n+1) = 2^{1/q}$.
\end{proposition}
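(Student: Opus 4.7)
The upper bound is immediate from Fact~\ref{fact:upperEstimateOneMorPoint}, so the entire content is to exhibit, for each $n\geq 2$, a pair $\NN\subset\MM$ with $|\NN|=n+1$ and $|\MM\setminus\NN|=1$ attaining the claimed lower bound on $\amen_p(\NN,\MM)$. Guided by Remark~\ref{rem:amenability} and the proof of \cite[Theorem 6.1]{AACD}, the natural candidate is a star: take $V=\{0,z,x_1,\ldots,x_n\}$ and let $T$ be the weighted tree on $V$ with edges $\{0,z\}$ and $\{z,x_i\}$ for $i=1,\ldots,n$, each of weight $1$. Set $\MM:=V_{T,q}$ (the $q$-metric space associated to this weighted tree) and $\NN:=\{0,x_1,\ldots,x_n\}$. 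Then $|\MM\setminus\NN|=1$ and by construction every nonzero pairwise distance in $\NN$ equals $2^{1/q}$.

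I would then test the single vector $\mu=\sum_{i=1}^n \delta(x_i)$. In $\NN$ condition~(c) of Theorem~\ref{thm:positiveCoef} holds trivially, since each $x_i$ is at distance $2^{1/q}$ from $0$ and from every other $x_j$; part~(a) of that theorem therefore yields the \emph{exact} value $\|\mu\|_{\F_p(\NN)}^p = n\cdot 2^{p/q}$. In $\MM$ I would apply Theorem~\ref{thm:algorithm} to the tree $T$ itself, rooted at $0$, with coefficients extended by $a_z=0$: then $V_{x_i}^T=\{x_i\}$, $V_z^T=\{z,x_1,\ldots,x_n\}$, all relevant edge-weights equal $1$, so $T(a)^p = n\cdot 1 + n^p\cdot 1 = n+n^p$, giving $\|\mu\|_{\F_p(\MM)}^p\leq n+n^p$. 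Combining the two estimates produces
\[
\amen_p(\NN,\MM)^p \;\geq\; \frac{n\cdot 2^{p/q}}{n+n^p} \;=\; \frac{2^{p/q}}{1+n^{p-1}},
\]
which is exactly the stated lower bound.

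The ``In particular'' clause is then a one-line consequence of $p<1$: as $n\to\infty$ we have $n^{p-1}\to 0$, so the lower bound tends to $2^{1/q}$ and matches the upper bound, forcing $\sup_n \absamen_p^q(n,n+1)=2^{1/q}$.

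The only genuinely nontrivial step is identifying the right example; once the star is in hand, Theorem~\ref{thm:positiveCoef} replaces the technically involved \cite[Proposition 4.15]{AACD} used in the original argument, and Theorem~\ref{thm:algorithm} supplies the matching upper bound on $\|\mu\|_{\F_p(\MM)}$ by a single explicit tree. I expect no hidden difficulty, since both algorithmic inputs have done the heavy lifting.
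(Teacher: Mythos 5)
Your proposal is correct and follows essentially the same route as the paper: the same star-shaped weighted tree, the same use of Theorem~\ref{thm:positiveCoef} for the exact norm in $\NN$ and of Theorem~\ref{thm:algorithm} (via the tree $T$ itself) for the upper bound in $\MM$. The only cosmetic difference is that you take coefficients all equal to $1$ rather than $n^{-1/p}$ and conclude directly from the definition of $\amen_p$ instead of citing Theorem~\ref{thm:reductionExamples}; both variants yield the identical ratio.
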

\begin{proof}The upper estimate is Fact~\ref{fact:upperEstimateOneMorPoint}.

For the lower estimate pick the weighted tree $T\in\TT(n+2)$ with vertices $V = \{0,z,x_1\ldots,x_{n}\}$, edges $E = \{(0,z), (z,x_i)\colon i=1,\ldots,n\}$ and weights $w(e)=1$ for $e\in E$. Consider $\NN = \{0,x_1,\ldots,x_n\} = \leaf(T)\cup \{0\}\subset \MM_{T,q}$ and $a = \R^{n}$ given as $a(i):=n^{-1/p}$, $i=1,\ldots,n$. By Theorem~\ref{thm:positiveCoef}, we have 
\[
\normb{Big}{\sum_{i=1}^n a_i \delta(x_i)}^p_{\F_p(\NN)} = \sum_{i=1}^n a_i^p d^p(x_i,0) = \sum_{i=1}^n \frac{1}{n}2^{p/q} = 2^{p/q},
\]
while 
\[
T_{d_{T,q}}(a)^p = \Big(\sum_{i=1}^n a_i\Big)^pd^p_{w,q}(0,z) + \sum_{i=1}^n a_i^p d^p_{w,q}(z,x_i) = n^{p-1} + 1.
\]
Thus, by Theorem~\ref{thm:reductionExamples} we have
\[
\absamen_p^q(n,n+1)\geq \frac{\|\sum_{i=1}^n a_i \delta(x_i)\|_{\F_p(\NN)}}{T_{d_{T,q}}(a)}  = \frac{2^{1/q}}{(1 + n^{p-1})^{1/p}},
\]
which proves the lower estimate as well.
\end{proof}
    
A careful inspection and obvious modification of the proof of \cite[Theorem 1.1]{Basso18} gives us also the following generalization of Fact~\ref{fact:upperEstimateOneMorPoint}.

\begin{theorem}\label{thm:basso}
    Let $0<p\leq q\leq 1$, $\MM$ be a finite $q$-metric space and $\NN\subsetneq \MM$ with $|\NN|\geq 2$. Then $\NN$ is $(|\MM\setminus\NN|+1)^{1/q}$-Lipschitz retract of $\MM$.

    In particular, given $n,k\in\Nat$ with $n\geq 2$ and $n<k$ we have
    \begin{itemize}
        \item $\absamen_p^q(n,k)\leq (k-n+1)^{1/q}$,
        \item $\absamen_p^q(n)\leq n^{1/q}$.
    \end{itemize}
\end{theorem}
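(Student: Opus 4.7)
The plan is to reduce the first statement to Basso's Theorem 1.1 in \cite{Basso18} by the standard passage from $q$-metrics to metrics, and then to derive the two numerical estimates from this Lipschitz-retract bound together with the tools already developed in the paper. Since $(\MM,d)$ is a $q$-metric space, $(\MM, d^q)$ is by definition a genuine metric space on the same underlying set. Basso's result asserts that any subset of cardinality at least $2$ of a finite metric space $M$ is an $(|M\setminus \NN|+1)$-Lipschitz retract of $M$; applying it to $(\MM,d^q)$ produces a retraction $r\colon \MM\to\NN$ satisfying
\[
d^q(r(x),r(y))\leq (|\MM\setminus\NN|+1)\, d^q(x,y),\qquad x,y\in\MM.
\]
Taking $q$-th roots of this inequality immediately yields
\[
d(r(x),r(y))\leq (|\MM\setminus\NN|+1)^{1/q}\, d(x,y),
\]
so the very same map $r$ is a $(|\MM\setminus\NN|+1)^{1/q}$-Lipschitz retraction of $(\MM,d)$ onto $\NN$. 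This proves the first statement; this is exactly the ``obvious modification'' referred to in the theorem, and the ``careful inspection'' amounts to checking that Basso's combinatorial construction of $r$ is insensitive to the replacement $d\rightsquigarrow d^q$.

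For the ``In particular'' part, I would invoke the standard observation (recalled at the beginning of Subsection~\ref{subsec:estimates}) that a $C$-Lipschitz retraction forces $\amen_p(\NN,\MM)\leq C$. For the first estimate, if $\NN\subset\MM$ are $q$-metric spaces with $|\NN\setminus\{0\}|\leq n$ and $|\MM\setminus\{0\}|\leq k$, then $|\MM\setminus \NN|\leq k-n$, so the first part gives $\amen_p(\NN,\MM)\leq (k-n+1)^{1/q}$, whence $\absamen_p^q(n,k)\leq (k-n+1)^{1/q}$. For the second estimate, Proposition~\ref{prop:addTooManyPoints}\ref{it:addTooManyPoints} gives $\absamen_p^q(n)=\absamen_p^q(n,2n-1)$, and specializing the first estimate to $k=2n-1$ produces $\absamen_p^q(n)\leq n^{1/q}$.

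The main obstacle is of course the reliance on Basso's theorem itself: one must trust (or verify) that his construction of a retraction in the metric setting really does achieve the cardinality-dependent bound $|M\setminus\NN|+1$, and that the construction is purely combinatorial-geometric, so that replacing $d$ by $d^q$ does not introduce any new hypothesis. Once this is granted, the transfer step above is a single line and the two corollaries are purely bookkeeping, using only cardinality counting together with the already established reduction from $\absamen_p^q(n)$ to $\absamen_p^q(n,2n-1)$.
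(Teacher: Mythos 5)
Your proposal is correct and follows essentially the same route as the paper: establish the metric case via Basso's construction, then transfer to $q$-metrics by applying it to $(\MM,d^q)$ and taking $q$-th roots, and deduce the two bounds from the retraction-implies-amenability observation together with Proposition~\ref{prop:addTooManyPoints}. The only minor difference is where the ``careful inspection'' of \cite{Basso18} is located: the paper points out that one must set $\varepsilon=0$ in Basso's formula (3.1) (legitimate since $\MM$ is finite) to extract the clean $(|\MM\setminus\NN|+1)$-Lipschitz retraction already in the metric case, whereas the snowflaking step $d\rightsquigarrow d^q$ itself requires no inspection of his construction at all.
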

\begin{proof}The proof of Theorem 1.1 in \cite{Basso18} gives the result for the case of metric spaces, that is, $q=1$ (the only modification of \cite[proof of Theorem 1.1]{Basso18} is that since $\MM$ is finite, we may put $\varepsilon=0$ in \cite[formula (3.1)]{Basso18} and then in the proof it is checked that there is a $(|\MM\setminus\NN|+1)$-Lipschitz retraction $R_\varepsilon:\MM\to\NN$).

For $q<1$ we use that the already proven case to find a $(|\MM\setminus\NN|+1)$-Lipschitz retraction $r:(\MM,d^q)\to (\NN,d^q)$, which is then a $(|\MM\setminus\NN|+1)^{1/q}$-Lipschitz when considered as a mapping from $(\MM,d)$ onto $(\NN,d)$.

The ``In particular'' part immediately follows using that by Proposition~\ref{prop:addTooManyPoints} we have $\absamen_p^q(n) = \absamen_p^q(n,2n-1)$.
\end{proof}

Note that Theorem~\ref{thm:basso} together with Proposition~\ref{prop:addTooManyPoints} implies that if we want to find sequences of $p$-metric spaces $(\NN_n)_{n\in\Nat}$, $(\MM_n)_{n\in\Nat}$ with $\NN_n\subset\MM_n$, $n\in\Nat$ and $\amen_p^q(\NN_n,\MM_n)\to\infty$, we must have $|\NN_n|\to\infty$ and at the same time we may consider only the case when $|\MM_n\setminus\NN_n|\leq |\NN_n| - 2$. That is, if we want to find an example when $\amen_p^q(\NN,\MM)$ is big, we have a lower bound on $|\NN|$ and an upper bound on $|\MM\setminus\NN|$.

On the other hand, even after numerous numerical simulations we were not able to find a single example when $\amen_p^q(\NN,\MM) > 2^{1/q}$. For the case of $q=1$ we are even able to find an upper bound, see Theorem~\ref{thm:metricSolution}. This is based on two ingredients, the first being our Theorem~\ref{thm:reductionExamples} and the second the following result from \cite{Bima}, which is an analogy for $p$-Banach spaces of a result by Matou\v{s}ek \cite{Matousek} who proved this result for the case of $p=1$.

\begin{theorem}[B\'ima]\label{thm:bima}
    Let $T$ be a metric tree, $S\subset T$, $p\in (0,1)$ and $X$ a $p$-Banach space. Then for any Lipschitz map $f:S\to X$ there exists a Lipschitz map $F:T\to X$ extending $f$ such that $\Lip(F)\leq C(p)\Lip(f)$, where $C(p)$ is a constant depending only on $p$.
\end{theorem}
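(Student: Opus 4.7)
The plan is to reduce the statement to the case of a finite tree and then construct the extension inductively, exploiting the fact that in a tree any two points are joined by a unique geodesic.

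\textbf{Reduction to finite trees.} First I would show that it suffices to prove the result when $T$ is a finite weighted tree and $S\subset V(T)$ is a finite subset. For a general metric tree and arbitrary $S\subset T$, one takes finer and finer finite subtrees $T_\varepsilon$ containing a given $\varepsilon$-net of $S$, applies the finite version to obtain extensions $F_\varepsilon$ with a uniform constant, and passes to a limit via an Arzel\`a--Ascoli-type compactness argument using completeness of $X$ and boundedness of $\Lip(F_\varepsilon)$. This reduction also allows me to assume every leaf of $T$ lies in $S$ (prune any leaf branch missing $S$), and the extension on each open edge can be produced by ``linear interpolation'' along the geodesic segment between the two endpoint values, which for a single edge adds no factor to the $p$-Lipschitz constant.

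\textbf{Inductive construction.} I would then induct on $|V(T)\setminus S|$. In the inductive step, pick an internal vertex $v \notin S$; deletion of $v$ splits $T$ into subtrees $T_1,\dots,T_k$, each containing a nonempty intersection with $S$. The construction reduces to choosing a single value $F(v)\in X$ so that $F(v)$ can serve as the extended value at $v$ and such that the inductive hypothesis applied in each augmented piece $T_i\cup\{v\}$ produces the desired extension there. The only nontrivial thing to verify is then the Lipschitz estimate of $F$ across $v$, which amounts to the family of constraints $\|F(v)-f(s)\|\leq L\,d(v,s)$ for $s\in S$, with $L = C(p)\Lip(f)$.

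\textbf{Choosing $F(v)$ and the main obstacle.} Because two points $s\in T_i$ and $s'\in T_j$ lying in distinct components of $T\setminus\{v\}$ satisfy $d(s,s')=d(s,v)+d(v,s')$, the constraints on $F(v)$ are \emph{pairwise compatible}: any pair is realised by some $y\in X$. For $p=1$ one concludes by a Helly/radial-projection argument (Matou\v{s}ek's construction), but in a $p$-Banach space pairwise intersecting balls need not share a common point, and one has to pay a factor depending only on $p$. I would propose to set $F(v):=f(s^\ast)$ for $s^\ast\in S$ minimising $d(v,s)$, or, more flexibly, to pick $F(v)$ as the value of the partial extension at a suitably chosen point along the geodesic from $v$ into a ``heaviest'' subtree, and estimate the Lipschitz loss by a single application of the $p$-triangle inequality. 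The main obstacle I foresee is controlling the accumulated loss through the induction: naive iteration produces a constant growing with $|V(T)\setminus S|$, so one needs a more refined scheme, for instance a dyadic grouping of the vertices of $T\setminus S$ by their distance to $S$, so that the $p$-triangle inequality is applied only $O(1)$ times per scale and the resulting constant depends on $p$ alone. This scale-based absorption of the $p$-defect is where I would expect the genuine difficulty to lie, in complete analogy with the delicate estimates in Matou\v{s}ek's original argument for $p=1$.
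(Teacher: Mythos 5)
First, a point of order: the paper does not prove this statement at all --- it is quoted from J.~B\'ima's preprint (in preparation), cited as the $p$-Banach analogue of Matou\v{s}ek's extension theorem for metric trees, and only its conclusion (with the bound $C(p)\le 7\cdot 12^{1/p-1}$) is used downstream. So there is no proof in the paper to compare yours against; I can only judge your argument on its own terms.

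On its own terms, your proposal has a genuine gap at exactly the point you flag yourself: the choice of $F(v)$ at an internal vertex and the control of the accumulated constant. The two concrete choices you offer do not work as stated. Taking $F(v):=f(s^\ast)$ with $s^\ast$ a nearest point of $S$ to $v$ does give $\norm{F(v)-f(s)}\le 2\Lip(f)\,d(v,s)$ for every $s\in S$, but it destroys the Lipschitz estimate between two vertices $v,w$ that both lie outside $S$: one only gets $\norm{F(v)-F(w)}\le\Lip(f)\bigl(d(v,S)+d(v,w)+d(w,S)\bigr)$, and $d(v,S)+d(w,S)$ is in general not bounded by any multiple of $d(v,w)$ (think of two adjacent internal vertices both far from $S$). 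This is the classical reason the nearest-point retraction is not Lipschitz, and it has nothing to do with $p$. The alternative --- iterating a one-vertex extension and paying a $p$-triangle inequality each time --- gives, as you concede, a constant growing with $|V(T)\setminus S|$, i.e.\ no theorem. The ``dyadic grouping of vertices by distance to $S$ so that the $p$-defect is paid $O(1)$ times per scale'' is precisely the heart of a Matou\v{s}ek-type argument, and it is the part that would need to be actually constructed: one must specify the assignment at each scale, verify the cross-scale Lipschitz estimates, and sum the resulting geometric series to get a constant depending on $p$ alone. None of this is carried out, so what you have is a plausible plan that correctly locates the difficulty rather than a proof.
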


\begin{theorem}\label{thm:metricSolution}
    Let $p\in (0,1)$. Then $\sup_{n\in\Nat}\absamen_p^1(n) < \infty$.

    In particular, given metric spaces $\NN\subset\MM$, $\NN$ is $p$-amenable in $\MM$ with a constant $C(p)$, depending only on $p$ (more precisely, we have $C(p)\leq 7\cdot 12^{1/p-1}$).
\end{theorem}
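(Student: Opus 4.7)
The plan is to combine the reduction in Theorem~\ref{thm:reductionExamples} (taken with $q=1$) with B\'ima's extension theorem (Theorem~\ref{thm:bima}). Theorem~\ref{thm:reductionExamples} says that
\[
\sup_{n\geq 2}\absamen_p^1(n) = \sup\bigl\{\amen_p(\leaf(T)\cup\{0\},\MM_{T,1})\colon T\text{ a finite weighted tree}\bigr\},
\]
so it suffices to bound $\amen_p(\NN,\MM_{T,1})$ uniformly over $T$, where $\NN := \leaf(T)\cup\{0\}$. The crucial geometric point is that $\MM_{T,1}$, being the vertex set of a weighted tree with the path metric $d_{w,1}$ (sum of edge weights), is a metric tree in the sense of Theorem~\ref{thm:bima}.

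First I would consider the isometric embedding $\delta_\NN\colon \NN\to\F_p(\NN)$, which is $1$-Lipschitz and sends $0$ to $0$. By Theorem~\ref{thm:bima} applied to $S=\NN\subset T=\MM_{T,1}$ and $X=\F_p(\NN)$, this map extends to a Lipschitz map $F\colon\MM_{T,1}\to\F_p(\NN)$ with $\Lip(F)\leq C(p)$, where $C(p)\leq 7\cdot 12^{1/p-1}$ is the explicit constant supplied by the extension theorem (one may arrange $F(0)=0$, either automatically since $0\in \NN$ and $\delta_\NN(0)=0$, or by translating). The canonical linearization of $F$ is a bounded linear operator $\widehat F\colon\F_p(\MM_{T,1})\to\F_p(\NN)$ with $\|\widehat F\|=\Lip(F)\leq C(p)$.

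Next I would observe that $\widehat F$ is a left inverse of the canonical embedding $L_j\colon\F_p(\NN)\to\F_p(\MM_{T,1})$: on generators $\delta(x)$, $x\in\NN$, we have $\widehat F(L_j(\delta(x)))=F(x)=\delta_\NN(x)$, hence $\widehat F\circ L_j=\operatorname{Id}_{\F_p(\NN)}$ by linearity. Therefore $L_j$ is an isomorphism onto its image with $\|L_j^{-1}\|\leq\|\widehat F\|\leq C(p)$, which gives $\amen_p(\NN,\MM_{T,1})\leq C(p)$. Combined with the reduction this proves the quantitative bound $\sup_n\absamen_p^1(n)\leq C(p)\leq 7\cdot 12^{1/p-1}$.

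For the ``in particular'' part, I would deduce the statement about arbitrary (not necessarily finite) metric spaces $\NN\subset\MM$ from the finite case via Fact~\ref{fact:wlogFinite}. Given a finite $F\subset\NN$ and coefficients $a\in\R^F$, each finite set $\NN'\subset\MM$ containing $F$ satisfies $\NN'':=\NN'\cap\NN\subset\NN$ and $F\subset\NN''$; the already established finite bound gives $\|\sum a_f\delta(f)\|_{\F_p(\NN'')}\leq C(p)\|\sum a_f\delta(f)\|_{\F_p(\NN')}$. Taking the infimum over $\NN'$ and applying Fact~\ref{fact:wlogFinite} on both sides yields $\|\sum a_f\delta(f)\|_{\F_p(\NN)}\leq C(p)\|\sum a_f\delta(f)\|_{\F_p(\MM)}$. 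The main obstacle is nothing more than identifying the right extension problem: once one recognizes that $\MM_{T,1}$ is a metric tree, B\'ima's theorem and the duality between Lipschitz maps and linear maps on Lipschitz-free spaces do all the work; the real difficulty has already been absorbed into Theorem~\ref{thm:reductionExamples} (reduction to weighted trees with the subspace of leaves) and Theorem~\ref{thm:bima} (the $p$-Banach extension constant).
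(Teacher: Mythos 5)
Your proposal is correct and follows essentially the same route as the paper: reduce via Theorem~\ref{thm:reductionExamples} (with $q=1$) to pairs $\leaf(T)\cup\{0\}\subset\MM_{T,1}$, extend $\delta_{\NN}$ by Theorem~\ref{thm:bima}, and use the canonical linearization of the extension as a left inverse of $L_j$ (the paper cites \cite[Lemma 2.7]{AACD21} for this last step), finishing the infinite case with Fact~\ref{fact:wlogFinite}. The one small imprecision is your claim that $\MM_{T,1}$ \emph{is} a metric tree: it is a finite, non-geodesic set, so to apply Theorem~\ref{thm:bima} one should, as the paper does, regard $\MM_{T,1}$ as a subset of the continuous metric tree $T'$ obtained by replacing each edge $e$ by an interval of length $w(e)$, extend $\delta_{\NN}$ over $T'$, and then restrict the extension back to $\MM_{T,1}$.
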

\begin{proof}Let $C(p)$ be the constant from Theorem~\ref{thm:bima}. By Theorem~\ref{thm:reductionExamples}, it suffices to prove that for every $n\in\Nat$ and $T\in\TT(n)$ we have $\amen_p(\{0\}\cup \leaf(T),\MM_{T,1})\leq C(p)$. But every $\MM_{T,1}$ is naturally subset of a metric tree $T'$, where each edge is replaced by a set isometric to the interval $[0,w(e)]$, so by Theorem~\ref{thm:bima} we may extend the isometry $\delta:\{0\}\cup \leaf(T)\to \F_p(\{0\}\cup \leaf(T))$ to a $C(p)$-Lipschitz map $r:T'\to \F_p(\{0\}\cup \leaf(T))$, in particular by \cite[Lemma 2.7]{AACD21} the mapping $r|_{\MM_{T,1}}$ witnesses that $\{0\}\cup \leaf(T)$ is $p$-complementably amenable in $\MM_{T,1}$ with constant $C(p)$ and therefore we have $\amen_p(\{0\}\cup \leaf(T),\MM_{T,1})\leq C(p)$. Thus, we have $\sup_{n\in\Nat}\absamen_p^1(n)\leq C(p)$ and the ``In particular'' part follows easily from Fact~\ref{fact:wlogFinite}.

Finally, the quantitative estimate on $C(p)$ follows from a careful inspection of the proof of Theorem~\ref{thm:bima}.
\end{proof}

\begin{remark}
    We note that the method of the proof of Theorem~\ref{thm:metricSolution} seems not to be applicable if $\NN\subset\MM$ are $p$-metric spaces as in this case an analogy of Theorem~\ref{thm:bima} is not true. Indeed, once we consider e.g. the $p$-metric analogue of a metric tree given as $T = ([0,1],\abs{\cdot}^{1/p})$, by \cite[Theorem 4.13]{AACD} we have $\F_p(T)\equiv L_p([0,1])$ isometrically and therefore $\F_p(T)^* = \{0\}$ which implies that there are no complemented subspaces of $\F_p(T)$ and therefore it cannot be true that $S\subset T$ with $|S|\geq 2$ is complementably $p$-amenable in $T$.
\end{remark}

As noted already in \cite{AACD20}, Theorem~\ref{thm:metricSolution} implies Proposition~\ref{prop:nets} below, which answers in positive \cite[Question 6.4]{AACD20}. Recall that: a $p$-Banach space $Y$ is said to be \emph{crudely finitely representable} in a $p$-Banach space $X$ supposing there exists $\lambda\geq 1$ such that every finite-dimensional $F\subset Y$ embeds $\lambda$-isomorphically into $X$; given a Banach space $X$ and $b>0$, $N\subset X$ is \emph{$b$-dense in $X$} if for every $x\in X$ we have $d(N,x)\leq b$.

\begin{proposition}\label{prop:nets}
Suppose $N$ is a $b$-dense subset of an infinite-dimensional Banach space $X$ for some $b>0$ and let
$p\in (0,1]$. Then $\F_p(N)$ is crudely finitely representable in $\F_p(X)$ and $\F_p(X)$ is crudely finitely
representable in $\F_p(N)$.
\end{proposition}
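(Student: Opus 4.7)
The plan is to derive both assertions from Theorem~\ref{thm:metricSolution}: the first direction is immediate, and for the second I would use a metric scaling trick to make the approximation error $b$ negligible compared to pairwise distances.

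For the first claim, the canonical linearization $L_j \colon \F_p(N) \to \F_p(X)$ of the inclusion satisfies $\|L_j\| \leq 1$, and by Theorem~\ref{thm:metricSolution} applied to the metric spaces $N \subset X$, $\|L_j^{-1}\| \leq C(p)$. Hence $\F_p(N)$ is $C(p)$-isomorphic to a subspace of $\F_p(X)$, which is already stronger than crude finite representability.

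For the second claim, fix a finite-dimensional $F \subset \F_p(X)$. By density of $\PP(X)$ in $\F_p(X)$, after an arbitrarily small perturbation (which only affects the distortion by a factor close to $1$) I may assume $F \subset V := \Span\{\delta_X(x_1),\ldots,\delta_X(x_n)\}$ for pairwise distinct nonzero $x_i \in X$, and WLOG $0 \in N$. For each $t > 1$ and each $i$ pick $n_i^t \in N$ with $\|tx_i - n_i^t\| \leq b$. Put $A := \{0,x_1,\ldots,x_n\}$, $A_t := \{0,tx_1,\ldots,tx_n\}$, $B_t := \{0,n_1^t,\ldots,n_n^t\}$, and define the linear map $\Psi_t \colon V \to \F_p(N)$ by $\delta_X(x_i) \mapsto \tfrac{1}{t}\delta_N(n_i^t)$. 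I would then chain the following four bi-Lipschitz estimates, all straightforward consequences of Theorem~\ref{thm:metricSolution} together with the functoriality of $\F_p(\cdot)$: (a) the map $A \to A_t$, $x_i \mapsto tx_i$, is an exact $t$-dilation, so its linearization $\F_p(A) \to \F_p(A_t)$ scales norms exactly by $t$; (b) the two norms $\|\cdot\|_{\F_p(A)}$ and $\|\cdot\|_{\F_p(X)}$ on $V$ are $C(p)$-equivalent; (c) the nearest-point map $\phi_t \colon A_t \to B_t$ satisfies $\max\{\Lip(\phi_t), \Lip(\phi_t^{-1})\} \leq 1 + \tfrac{2b}{tR}$, where $R := \min\{\|x_i\|,\, d(x_i,x_j) \colon i \neq j\}$, so $\F_p(\phi_t)$ is a $(1+\varepsilon_t)$-isomorphism between $\F_p(A_t)$ and $\F_p(B_t)$; and (d) the canonical embedding $\F_p(B_t) \hookrightarrow \F_p(N)$ has distortion $\leq C(p)$, again by Theorem~\ref{thm:metricSolution}. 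Multiplying the four factors shows $\Psi_t$ is an isomorphism onto its image with distortion $\leq C(p)^2(1+\varepsilon_t)^2$, which is at most $2C(p)^2$ for $t$ sufficiently large.

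The one genuinely new ingredient is step (c): the approximation error $b$ is fixed, while pairwise distances within $A_t$ grow linearly in $t$, so the relative error $\varepsilon_t = O(1/t)$ tends to zero and $\phi_t$ becomes asymptotically isometric. The remaining steps are pure bookkeeping, and the resulting distortion bound depends only on $p$ (not on $F$), which is exactly crude finite representability of $\F_p(X)$ in $\F_p(N)$.
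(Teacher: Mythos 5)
Your proof is correct and rests on the same two pillars as the paper's: Theorem~\ref{thm:metricSolution} (applied both to a finite subset of $X$ and to a finite subset of $N$) combined with a dilation that makes the fixed density error $b$ negligible. The difference is where the error is absorbed. The paper observes that $\bigcup_{k}\tfrac1k N$ is dense in $X$, perturbs a basis of $F$ directly into $\Span\{\delta_X(x_i)\colon x_i\in\tfrac1k N\}$ via a standard small\-/perturbation lemma, and then uses that this span is \emph{exactly isometric} to $\Span\{\delta_N(kx_i)\}\subset\F_p(N)$ (linearize the dilation $\tfrac1k N\ni x\mapsto kx\in N$); only one application of Theorem~\ref{thm:metricSolution} is needed, yielding the constant $2C(p)$. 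You instead perturb into a span of deltas of arbitrary points $x_i\in X$ and then handle the approximation at the metric level through the nearest-point map $\phi_t$ on the scaled set $A_t$; your step (c) is sound — for $t$ large the $n_i^t$ are pairwise distinct, $\Lip(\phi_t)\Lip(\phi_t^{-1})\le(1+\varepsilon_t)^2$ with $\varepsilon_t=O(1/t)$ (strictly, $\Lip(\phi_t^{-1})\le(1-2b/(tR))^{-1}$ rather than $1+2b/(tR)$, but this is immaterial) — and the four factors multiply as you say. The price of your bookkeeping is a second invocation of Theorem~\ref{thm:metricSolution} (for $A\subset X$), giving the slightly worse bound $2C(p)^2$ in place of $2C(p)$; since crude finite representability only asks for some uniform constant, this is harmless. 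Both arguments also tacitly normalize the base point of $N$ (your ``WLOG $0\in N$'' is justified by translating $N$, which preserves $b$-density and the free space up to isometry), so there is no gap.
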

\begin{proof}Since $N\subset X$, using Theorem~\ref{thm:metricSolution} we have that $\F_p(N)$ isomorphically embeds into $\F_p(X)$ so trivially $\F_p(N)$ is crudely finitely representable in $\F_p(X)$.

On the other hand, pick a finite-dimensional $F\subset \F_p(X)$. Since $\bigcup_{k\in\Nat}\tfrac{1}{k}N$ is a dense subset of $X$, we have $\F_p(X) = \closedSpan{\delta_X(\tfrac{x}{k})\colon x\in N,\, k\in\Nat}$ and therefore we may without loss of generality assume that $F=\Span\{\delta_X(x_1),\ldots,\delta_X(x_m)\}$ for some $x_1,\ldots,x_m\in \frac{1}{k}N$ and $k\in \Nat$, because any finite dimensional space $2$-isomorphically embeds into space of this form (indeed, given basis $\{b_1,\ldots,b_n\}$ of $F$ and $\varepsilon>0$ small enough we pick $k\in\Nat$ and $x_1,\ldots,x_m\in \tfrac{1}{k}N$ such that $\|b_i-b_i'\|<\varepsilon$ for some $b_i'\in \Span\{\delta_X(x_1),\ldots,\delta_X(x_m)\}$ and then it is easy to check that the mapping $b_i\mapsto b_i'$ extends to a $2$-isomorphic embedding supposing $\varepsilon>0$ was small enough, see e.g. \cite[Lemma 3.3(i)]{CDDK} for the proof). By Theorem~\ref{thm:metricSolution} we have that $F$ as above is $C(p)$-isomorphic to $\Span\{\delta_{\tfrac{1}{k}N}(x_1),\ldots,\delta_{\tfrac{1}{k}N}(x_m)\}$, which in turn is isometric to the space $\Span\{\delta_{N}(kx_1),\ldots,\delta_{N}(kx_m)\}\subset \F_p(N)$ (via linearization of the mapping $\tfrac{1}{k}N\ni x\mapsto kx_i\in N$). Thus, any finite-dimensional subspace $F\subset \F_p(X)$ embeds $2C(p)$-isomorphically into $\F_p(N)$, which finishes the proof.
\end{proof}

Apart from dealing with the issue of finding whether we have $\sup_n\absamen_p(n)<\infty$, one could try to estimate $\absamen_p(n)$ for some small values of $n$.

For $n=2$ the best lower estimate we are able to get is the following. Our numerical simulations suggest that this could be the optimal bound.

\begin{proposition}\label{prop:estimateTwoPoints}
For any $p\in (0,1)$ and $q\in [p,1]$ we have
\[
\absamen_p^q(2)\geq \Big(\frac{2\cdot 2^{p/q}}{2^p(2 - (2-2^p)^{q/p})^{p/q} + 2\cdot(2-2^p)}\Big)^{1/p}.
\]
In particular, $\absamen_p(2) \geq \frac{4}{4+ 2^p(2^p-2)}$ and if $p\in (0,1)$ is such that
\[
\frac{2^{p+1}}{4 + 2^p\big((2 - (2-2^p)^{1/p})^p  - 2\big)} > 1,
\]
then $\absamen_p^1(2) > 1$ (and this is the case e.g. for $p=2/3$).
\end{proposition}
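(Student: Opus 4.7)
The plan is to apply Theorem~\ref{thm:reductionExamples} and exhibit one weighted tree whose associated ratio matches the claimed bound. Concretely, I would take $T$ with vertices $V = \{0, z, x, y\}$, root $0$, and edges $(0, z)$, $(z, x)$, $(z, y)$ carrying weights $t$, $1$, $1$ respectively, where $t > 0$ is a parameter to be fixed. Then $\leaf(T) = \{x, y\}$ and the induced $q$-metric distances on $\NN = \{0, x, y\}$ are $d(0, x) = d(0, y) = (t^q + 1)^{1/q}$ and $d(x, y) = 2^{1/q}$. By Theorem~\ref{thm:reductionExamples},
\[
\absamen_p^q(2) \geq \frac{\|\delta(x) + \delta(y)\|_{\F_p(\NN)}}{T_{d_{T,q}}(\delta(x) + \delta(y))}.
\]

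The denominator follows immediately from the definition of $T_{d_{T,q}}$: one has $c_T(x, \cdot) = c_T(y, \cdot) = 1$ and $c_T(z, \cdot) = 2$, hence $T_{d_{T,q}}(\delta(x)+\delta(y))^p = 2 + 2^p t^p$. For the numerator I would invoke Corollary~\ref{cor:twoPointsFormula}, which gives the minimum of three expressions; the $x \leftrightarrow y$ symmetry makes two of them equal, so the minimum reduces to
\[
\min\bigl\{2(t^q+1)^{p/q},\; 2^p(t^q+1)^{p/q} + 2^{p/q}\bigr\}.
\]
The crucial step is to choose $t$ so that these two quantities agree, i.e., $(2-2^p)(t^q+1)^{p/q} = 2^{p/q}$, which yields $t^q = \tfrac{2 - (2-2^p)^{q/p}}{(2-2^p)^{q/p}}$. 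This is positive precisely because $p < 1$ forces $2 - 2^p > 0$ and hence $(2-2^p)^{q/p} < 2$.

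With this calibration one verifies $(t^q+1)^{p/q} = \tfrac{2^{p/q}}{2-2^p}$ and $t^p = \tfrac{(2-(2-2^p)^{q/p})^{p/q}}{2-2^p}$, whence the numerator equals $\tfrac{2\cdot 2^{p/q}}{2-2^p}$ and the denominator equals $\tfrac{2(2-2^p) + 2^p(2-(2-2^p)^{q/p})^{p/q}}{2-2^p}$. Dividing the first by the second and taking $p$-th roots recovers exactly the asserted estimate. For the ``in particular'' part, I would specialise $q = p$ (then $(2-2^p)^{q/p} = 2 - 2^p$, so the inner expression collapses to $2^p$ and the denominator becomes $4 + 2^p(2^p - 2)$) and $q = 1$ (where one keeps $(2-(2-2^p)^{1/p})^p$ in place and reads off the condition under which the ratio exceeds $1$); the explicit numerical check for $p = 2/3$ is a routine evaluation.

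The main subtlety is the calibration of $t$: any choice produces \emph{some} lower bound, but only the value equalising the two surviving branches of the minimum in Corollary~\ref{cor:twoPointsFormula} produces the clean algebraic form appearing in the statement. No further lemmas beyond Theorem~\ref{thm:reductionExamples} and Corollary~\ref{cor:twoPointsFormula} are needed; the entire argument is a one-tree computation followed by a parameter optimisation.
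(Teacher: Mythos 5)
Your proposal is correct and follows essentially the same route as the paper: the same four-vertex star tree with unit weights on the two leaf edges, the same use of Theorem~\ref{thm:reductionExamples} and Corollary~\ref{cor:twoPointsFormula}, and the same weight on the root edge (the paper simply states $w((0,z)) = \big(\tfrac{2}{(2-2^p)^{q/p}} - 1\big)^{1/q}$ outright, which is exactly your calibrated $t$). The only difference is expository — you derive the weight by equalising the two branches of the minimum rather than verifying a given value — and all your computations check out.
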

\begin{proof}Pick the weighted tree $T\in\TT(3)$ with vertices $\{0,z,x,y\}$, edges $E=\{(0,z),(z,x),(z,y)\}$ and weights $w((z,x)) = w((z,y)) = 1$ and 
\[
w((0,z)) = \Big(\frac{2}{(2-2^p)^{q/p}} - 1\Big)^{1/q}.
\]
Consider $\NN = \{0,x,y\} = \leaf(T)\cup \{0\}\subset \MM_{T,q}$. By Corollary~\ref{cor:twoPointsFormula} we have
\[\begin{split}
\|\delta(x) + \delta(y)\|_{\F_p(\NN)}^p & = \min\{2d_{w,q}^p(x,0), 2^pd_{w,q}^p(x,0) + d_{w,q}^p(x,y)\}\\
&  = \min\{2(1 + w((0,z))^q)^{p/q},2^p(1 + w((0,z))^q)^{p/q} + 2^{p/q}\}\\
& = \min\{2\frac{2^{p/q}}{2-2^p},2^p\frac{2^{p/q}}{2-2^p} + 2^{p/q}\}\\
& = \frac{2^{p/q}}{2-2^p}\min\{2,2^p+2-2^p\} = \frac{2\cdot 2^{p/q}}{2-2^p}.
\end{split}\]
Moreover,
\[\begin{split}
T_{d_T,q}((1,1))^p & = 2^p w((0,z))^p + w((z,x))^p + w((z,y))^p = 2^p\frac{(2 - (2-2^p)^{q/p})^{p/q}}{2-2^p} + 2.
\end{split}\]
Thus, by Theorem~\ref{thm:reductionExamples} we have
\[\begin{split}
\Big(\absamen_p^q(2)\Big)^p & \geq \frac{\|\delta(x) + \delta(y)\|_{\F_p(\NN)}^p}{T_{d_{T,q}}((1,1))^p} = \frac{2\cdot 2^{p/q}}{2^p(2 - (2-2^p)^{q/p})^{p/q} + 2\cdot(2-2^p)}\\
& = \begin{cases} \frac{4}{4+ 2^p(2^p-2)} & \text{ if $q=p$},\\
\frac{2^{p+1}}{4 + 2^p\big((2 - (2-2^p)^{1/p})^p  - 2\big)} & \text{ if $q=1$}.
\end{cases}
\end{split}\]
\end{proof}

We note the issue of finding the value of $\absamen_p(2)$ is in a certain sense similar to the research considered in \cite{Basso22}, where G. Basso was considering the problem of finding for $n=3,4$ optimal value of constant $\operatorname{ae}(n)>0$ such that any metric space with $n$ points is complementably $1$-amenable in any superspace with constant $\operatorname{ae}(n)$ (he used different terminology, but it is easily seen that the problem he considered is really equivalent to what is mentioned above). In \cite{Basso22} using numerical simulations the author obtained a lower bound for $\operatorname{ae}(4)$ and using the solution of Gr\"unbaum conjecture \cite{ChL10}, whose prove is based again on numerical software, he was able to find an exact value of $\operatorname{ae}(3)$. It seems that the problem of finding exact value of $\absamen_p(n)$ for small numbers $n$ could be of a similar technical character, where the use of a numerical software could be quite helpful.

\section{Open questions}\label{sec:open}

The most important problem remains \cite[Question 6.2]{AACD}, that is, whether given $p$-metric spaces $\NN\subset\MM$, $\NN$ is $p$-amenable in $\MM$ (Theorem~\ref{thm:metricSolution} gives positive answer for metric spaces). By \cite[Lemma 2.5]{AACD20}, it is equivalent to ask whether we have $\sup_{n\in\Nat}\absamen_p(n)<\infty$. Even after a considerable effort (including numerical simulation, where we considered several millions of randomly generated $p$-metric spaces) we were not able to find a single example of $p$-metric spaces $\NN\subset\MM$ for which we would have $\amen_p(\NN,\MM)>2^{1/p}$. This leads us to ask the following.

\begin{question}
Given $0<p\leq q\leq 1$, is it true that $\sup_{n\in\Nat}\absamen_p^q(n)\leq 2^{1/q}$? Is it true at least for $q=p$ or for $q=1$?
\end{question}

Concerning the value of $\absamen_p(2)$ the best estimate we have is contained in Proposition~\ref{prop:estimateTwoPoints}. This leads us to ask e.g. the following two related problems, which seem to be of a very technical nature. We suspect the answer to the first one is positive, and the answer to the second one is negative (which would be interesting as it would imply that the problem of whether $\NN$ is isometrically $p$-amenable in $\MM$ for metric spaces $\NN\subset\MM$ with $|\NN|=3$, depends on the value of $p\in (0,1)$).

\begin{question}
Is it true that $\absamen_p(2) = \frac{4}{4+ 2^p(2^p-2)}$ for every $p\in (0,1)$?
\end{question}

\begin{question}
Is it true that $\absamen_p^1(2) > 1$ for every $p\in (0,1)$?
\end{question}

\end{document}